\documentclass[11pt,a4paper,twoside]{article}

\usepackage{iftex}
\usepackage[T1]{fontenc}
\ifPDFTeX
  \usepackage[utf8]{inputenc}
\fi
\usepackage{lmodern}

\usepackage{amsmath,amsthm,amssymb,mathtools}

\usepackage{geometry}
\usepackage{xcolor}
\usepackage{microtype}
\usepackage{fancyhdr}

\usepackage[
  colorlinks=true,
  linkcolor=blue!45!black,
  citecolor=blue!45!black,
  urlcolor=blue!45!black
]{hyperref}

\allowdisplaybreaks[3]
\newtheoremstyle{paperplain}{0.65em}{0.65em}{\itshape}{}%
  {\bfseries}{.}{0.5em}{}
\theoremstyle{paperplain}
\newtheorem{theorem}{Theorem}[section]
\newtheorem{proposition}[theorem]{Proposition}
\newtheorem{lemma}[theorem]{Lemma}
\newtheorem{corollary}[theorem]{Corollary}
\newtheoremstyle{paperremark}{0.65em}{0.65em}{\normalfont}{}%
  {\bfseries}{.}{0.5em}{}
\theoremstyle{paperremark}
\newtheorem{remark}[theorem]{Remark}

\newcommand{\runningtitle}{}
\let\originaltitle\title
\renewcommand{\title}[2][]{%
  \originaltitle{#2}%
  \if\relax\detokenize{#1}\relax
    \gdef\runningtitle{#2}%
  \else
    \gdef\runningtitle{#1}%
  \fi
}

\title[Graphical scalar-curvature equation]{\textbf{ Interior curvature estimates for the graphical
scalar curvature equation in all dimensions}}

\author{%
  \textbf{Guohuan Qiu}
  \and
  \textbf{Jin Yan}
}

\date{}
\makeatletter
\newcommand{\printfirstpagebottommatter}{%
  \begingroup
  \renewcommand{\thefootnote}{}%
  \renewcommand{\@makefntext}[1]{\noindent##1}%
  \footnotetext[0]{%
    \textit{Keywords.}
    Scalar curvature equation; interior curvature estimate;
    doubling method; two-surface comparison; Pogorelov estimate.
    \par\smallskip
    \textit{2020 Mathematics Subject Classification.}
    35J60, 35B45, 53C42.%
  }%
  \endgroup
}
\makeatother
\newcommand{\printauthorinformation}{%
  \par\bigskip
  \begingroup
  \small
  \setlength{\parindent}{0pt}%

  \noindent
  \textbf{Guohuan Qiu}\par
  State Key Laboratory of Mathematical Sciences,
  Academy of Mathematics and Systems Science,
  Chinese Academy of Sciences,
  Beijing 100190, China; \\
  Institute of Mathematics,
  Academy of Mathematics and Systems Science,
  Chinese Academy of Sciences,
  55 Zhongguancun East Road, Beijing 100190, China.\par
  \textit{Email:}
  \href{mailto:qiugh@amss.ac.cn}{qiugh@amss.ac.cn}.

  \par\medskip

  \noindent
  \textbf{Jin Yan}\par
  Institute of Mathematics,
  Academy of Mathematics and Systems Science,
  Chinese Academy of Sciences,
  55 Zhongguancun East Road, Beijing 100190, China.\par
  \textit{Email:}
  \href{mailto:yanjin@amss.ac.cn}{yanjin@amss.ac.cn}.

  \endgroup
}

\begin{document}
\maketitle
\printfirstpagebottommatter
\begin{abstract}
We establish interior curvature estimates for admissible solutions
of the constant graphical scalar curvature equation in every
dimension, thereby settling an open problem in fully nonlinear
elliptic PDEs. The proof combines Jacobi inequalities and a modified
auxiliary function with a two-surface maximum principle and
a corresponding Pogorelov estimate.
\end{abstract}

\section{Introduction}\label{sec:introduction}

Let $X(x)=(x,u(x))$ be a hypersurface graph in $\mathbb R^{n+1}$.
We consider the scalar curvature equation
\begin{equation}\label{eq:intro-equation}
 \sigma_2(\kappa[u]):=\frac{H^2-|A|^2}{2}=1,
 \qquad \kappa[u]\in\Gamma_2,
\end{equation}
where $\kappa[u]$, $H$ and $A$ denote the principal curvatures,
mean curvature and second fundamental form with respect to the
downward unit normal, and
\[
 \Gamma_2
 :=
 \left\{
 \kappa\in\mathbb R^n:
 \sum_i\kappa_i>0,\quad
 \sum_{i<j}\kappa_i\kappa_j>0
 \right\}
\]
is the G{\aa}rding cone. This fully nonlinear equation is elliptic on $\Gamma_2$.
Under a fixed gradient bound, an a priori curvature bound
yields uniform ellipticity.

Our study of the scalar curvature equations
is motivated by their connections with classical Monge--Amp\`ere
theory and differential geometry. In dimension two,
$\sigma_2(D^2u)=\det D^2u$, while $\sigma_2(\kappa)$ is the Gauss
curvature. Both equations therefore have a Monge--Amp\`ere structure.
Early a priori estimates for elliptic Monge--Amp\`ere equations
were obtained by Lewy \cite{Lewy1935,Lewy1937}.
Such equations arise in the Weyl isometric embedding problem
\cite{Alexandrov1948,Heinz,Lewy1938Weyl,Nirenberg1953,Pogorelov1973,Weyl1916}, the classical Minkowski problem in convex geometry
\cite{Alexandrov1938,ChengYau1976,Lewy1938Minkowski,Minkowski1903,
Nirenberg1953,Pogorelov1978}, and optimal transport
with quadratic cost \cite{Brenier1991,Caffarelli1992}. In dimension three, on the $\Gamma_2$ branch, the equation
$\sigma_2(D^2u)=1$ is equivalent to the special Lagrangian
equation at the critical phase $\pi/2$, arising from the
special Lagrangian geometry of Harvey and Lawson
\cite{HarveyLawson}. 
For Euclidean hypersurfaces, the Gauss equation relates $\sigma_2(\kappa)$
to scalar curvature, a fundamental intrinsic invariant:
\[
 \operatorname{Scal}=2\sigma_2(\kappa).
\]

The existence and boundary regularity theory for Hessian and
curvature equations has been developed through the study of
Dirichlet and Neumann problems. Caffarelli, Nirenberg and Spruck \cite{CNSHessian,CNS}
established fundamental results for the Dirichlet problems for
$\sigma_k$-Hessian and $\sigma_k$-curvature equations, and Ivochkina \cite{Ivochkina1990,Ivochkina} obtained further solvability
results for prescribed curvature equations. For Neumann problems, Lions,
Trudinger and Urbas \cite{LionsTrudingerUrbas} established a priori estimates and classical
solvability for Monge--Amp\`ere type equations. Ma and the first
author \cite{MaQiuNeumann} resolved a conjecture of Trudinger by establishing
global second-derivative estimates and classical solvability
for the Neumann problem for $k$-Hessian equations on smooth
uniformly convex domains. More recently, the first
author \cite{qiu2026robin} obtained boundary
second-derivative estimates for admissible scalar curvature
graphs in dimension three.

Purely interior second-derivative estimates are more difficult
to obtain, as they depend only on lower-order norms and
the equation data, without relying on the regularity of the
boundary data. Whether such estimates hold depends crucially on the value of $k$.
In dimension two, Heinz \cite{Heinz1956,Heinz} established the classical interior
$C^2$ estimate for the Monge--Amp\`ere equation.
For alternative proofs, see \cite{ChenHanOu,Liu}. In dimensions $n\geq3$, Pogorelov \cite{Pogorelov1978}
constructed singular convex solutions of the Monge--Amp\`ere
equation. Urbas \cite{Urbas} extended these constructions
to $\sigma_k$-Hessian and $\sigma_k$-curvature equations for
$3\leq k\leq n$. These examples show that general
interior second-derivative estimates fail for $k\geq3$,
highlighting the special role of the quadratic case $k=2$.

For the quadratic Hessian equation
\[
                         \sigma_2(D^2u)=1,
\]
Warren and Yuan \cite{WarrenYuan} achieved the first breakthrough beyond dimension
two by exploiting the minimal-surface structure of the associated
critical-phase special Lagrangian equation to establish the
interior estimate in dimension three.
In general dimensions, Guan and the first author \cite{GuanQiu} obtained
interior estimates under a fixed lower bound for $\sigma_3(D^2u)$. 
Their proof introduced $x\cdot Du-u$ as an auxiliary function
in the maximum-principle approach to these interior estimates.
This function subsequently became a key ingredient in the
first author's doubling argument and its later developments.
Convex solutions were also treated independently by McGonagle, Song and Yuan
\cite{McGonagleSongYuan} and Mooney \cite{Mooney}.
For variable right-hand sides, the first author \cite{QiuHessian} established
interior estimates in dimension three for admissible solutions
with general positive $C^2$ data $f(x,u,Du)$.
The proof developed Guan--Qiu's auxiliary-function construction into
a doubling argument that propagates Hessian bounds from
a smaller ball to a larger region.
Shankar and Yuan \cite{ShankarYuanSemi} later established interior estimates for
semiconvex solutions in arbitrary dimensions. In a major subsequent advance \cite{ShankarYuan}, they resolved the problem in dimension four.
Building on the first author's maximum-principle doubling
argument \cite{QiuHessian}, they established an almost Jacobi
inequality whose gradient coefficient may degenerate.
They overcame this difficulty by exploiting the favorable
ellipticity of the corresponding Hessian configurations
and a compensating $|Du|^2$ term in the test function.
This yielded the doubling estimate needed in dimension four.
They also adapted the argument of Chaudhuri and Trudinger
\cite{ChaudhuriTrudinger2005} to establish almost-everywhere
second differentiability of viscosity solutions.
Savin's small perturbation theorem \cite{Savin2007} then
provided local Hessian control, which the doubling estimate
propagated within a compactness argument.
Their method also yielded higher-dimensional estimates
under a dynamic semiconvexity condition. Fan \cite{FanHessian} subsequently extended these estimates to equations with
general positive $C^{1,1}$ right-hand sides $f(x,u,Du)$.
Most recently, Li and Wu resolved the interior regularity problem
for the $\sigma_2$-Hessian equation in every dimension through
a new nonlinear comparison method \cite{LiWu}.

For the graphical scalar curvature equation
\eqref{eq:intro-equation}, progress has been more limited.
Guan and the first author established interior curvature
estimates for convex solutions \cite{GuanQiu}.
In dimension three, the first author \cite{Qiu3D} obtained the unrestricted
estimate by adapting Warren and Yuan's integral approach
to the Gauss graph $(X,\nu)$, viewed as a submanifold of higher
codimension with bounded mean curvature. For convex solutions, Chen, Jian and Zhou \cite{ChenJianZhou2026} obtained interior
estimates for the Lipschitz positive right-hand side.
More recently, Fan and Shankar \cite{FanShankar} established an interior curvature estimate in dimension four whose constant also depends on the modulus of continuity of $Du$. In adapting the first author's maximum-principle doubling
argument \cite{QiuHessian} to the curvature equation, Fan and
Shankar encountered an additional gradient-dependent term,
which they controlled under a local small-gradient condition
\cite{FanShankar}. Our modified auxiliary function controls
this term under an arbitrary fixed gradient bound. A separate difficulty is
that the comparison graphs have different metrics and normals.
We address this mismatch by comparing paired points on the
two graphs. Together with a two-surface Pogorelov estimate,
these ideas yield the following quantitative curvature
estimate, with a constant depending only on the dimension
and the height and gradient bounds.

\begin{theorem}\label{thm:main}
Let \(n\geq3\).  Suppose that \(u\in C^\infty(B_{3/2})\) satisfies
\begin{equation}\label{eq:main-assumption}
 \kappa[u]\in\Gamma_2,
 \qquad
 \sigma_2(\kappa[u])=1\quad\text{in }B_{3/2},
 \qquad
 \|u\|_{L^\infty(B_{3/2})}+\|Du\|_{L^\infty(B_{3/2})}\leq K.
\end{equation}
Then
\begin{equation}\label{eq:main-estimate}
\sup_{B_{1/2}}|\kappa[u]|\leq C(n,K).
\end{equation}
\end{theorem}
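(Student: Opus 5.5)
The proof follows the doubling strategy of \cite{QiuHessian,ShankarYuan}, adapted to graphs, and closes by a compactness argument. We first prove a \emph{doubling estimate}: if $u$ satisfies \eqref{eq:main-assumption} on $B_{r}(x_0)\subset B_{3/2}$ with $r$ bounded below, then
\[
\sup_{B_{r/2}(x_0)}|\kappa|\le C(n,K)\Bigl(1+\sup_{B_{r/4}(x_0)}|\kappa|\Bigr)^{N}.
\]
Since the right-hand side only needs control on a small ball, it suffices to show that some small ball around each point carries a uniform curvature bound.

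\textbf{Step 1: Jacobi inequality.} Let $b=\log\lambda_{\max}$, where $\lambda_{\max}$ is the largest principal curvature, suitably smoothed at multiple eigenvalues, and let $F^{ij}=\partial\sigma_2/\partial h_{ij}$. Differentiating the equation twice in an orthonormal frame on the graph and using the concavity of $\sigma_2^{1/2}$ together with the Codazzi equations, we aim for
\[
F^{ij}b_{ij}\ge c(n)\,F^{ij}b_ib_j - C(n,K)
\]
whenever $\lambda_{\max}$ is large. In high dimensions the gradient coefficient may degenerate. As in \cite{ShankarYuan}, we handle this by splitting according to the eigenvalue configuration. In the degenerate configurations the linearized operator is favorably elliptic in the relevant direction, and a $|Du|^2$-type term absorbs the loss. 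Compared with the Hessian case, the curvature equation produces an extra gradient-dependent term from $\nu$ and from the metric $g_{ij}=\delta_{ij}+u_iu_j$. This is the term \cite{FanShankar} controls only for small gradient. To control it for arbitrary $|Du|\le K$, we modify the auxiliary function: we use the support-type quantity $\langle X,\nu\rangle$, the graphical analogue of $x\cdot Du-u$, together with $\sqrt{1+|Du|^2}$, multiplied by a large constant depending on $K$, so that its good $F^{ij}h_{ik}h_{kj}$ contribution dominates.

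\textbf{Step 2: Two-surface comparison.} To compare curvature at scale $r/4$ with scale $r/2$, we follow the doubling idea and consider the rescaled surfaces. The two graphs have different metrics and normals, so we work with pairs of points $(X(x),\tilde X(y))$ on the two surfaces. The test function has the form
\[
\Phi(x,y)=\eta\,b(x) - \beta\,\bigl|X(x)-\tilde X(y)\bigr|^2+\cdots,
\]
coupled through the quantities from Step 1. At an interior maximum we differentiate in both variables. The cross terms coming from the mismatch between $\nu(x)$ and $\tilde\nu(y)$ are bounded by $C|X-\tilde X|\,|A|$ and absorbed using the Jacobi gradient term. Running the maximum principle for $\Phi$ then gives the doubling estimate.

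\textbf{Step 3: Pogorelov estimate and conclusion.} We prove a two-surface Pogorelov estimate of the form $(\phi)^{\gamma}|\kappa|\le C$, where $\phi$ is a cutoff built from a strictly admissible comparison graph. This gives local control wherever the solution is close to a smooth admissible surface. We then argue by contradiction. Suppose $u_k$ satisfy \eqref{eq:main-assumption} but $|\kappa[u_k]|\to\infty$ on $B_{1/2}$. Extract a $C^{0,1}$ limit $u$, which is a viscosity solution. As in \cite{ShankarYuan}, via \cite{ChaudhuriTrudinger2005}, the limit is twice differentiable almost everywhere. At such a point, Savin's small perturbation theorem \cite{Savin2007}, or the Pogorelov estimate, gives uniform curvature bounds for $u_k$ on a tiny ball for $k$ large. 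Iterating the doubling estimate with a chain of balls then bounds $|\kappa[u_k]|$ on $B_{1/2}$, a contradiction.

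\textbf{Main obstacle.} We expect the main difficulty to be Step 1 in all dimensions $n\ge5$: obtaining a Jacobi inequality with a nondegenerate gradient coefficient, uniform in the eigenvalue configuration, while also absorbing the gradient-dependent terms. Our first attempt will be to combine the almost-Jacobi case analysis with the modified auxiliary function. If the coefficient still degenerates, we will push the compensation into the two-point function of Step 2, rather than requiring a pointwise Jacobi inequality.
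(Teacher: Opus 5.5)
There is a genuine gap, and it sits where you flag your ``main obstacle.'' Your whole plan, including the chaining after the compactness step, rests on the Step~1 inequality $F^{ij}b_{ij}\ge c(n)F^{ij}b_ib_j-C$, uniform in the eigenvalue configuration.

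\begin{itemize}
\item No such inequality is known for $n\ge5$ without a dynamic semiconvexity hypothesis. This is precisely where the Shankar--Yuan method stops. The paper's own computation for $\log H$ (Proposition~\ref{prop:jacobi}, Lemma~\ref{lem:weak-directions}) only gives directional coefficients $\Theta_i\ge-(n-4)/n$, and these can be negative for $n\ge5$.
\item In a Qiu-type doubling maximum principle such as $2\log\rho+\beta\Phi+\log b$, the first-order condition forces $b_i/b=4R_i/\rho-\beta\Phi_i$, which contains $\beta\kappa_iZ_i$. A negative coefficient then costs roughly $b\,\beta^2F^{ii}\kappa_i^2Z_i^2$. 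This grows with $b$, so it cannot be absorbed by the $b$-independent good terms your modified auxiliary function supplies: $\beta|z|\sum_iF^{ii}\kappa_i^2$ from the support function, or $F^{ij}h_{ik}h_{kj}$ from $W$.
\item Your fallback, pushing the compensation into Step~2, gives no mechanism. You do not specify the second surface $\tilde X$ (``rescaled surfaces'' does not determine it). You do not say what $\eta b(x)-\beta|X(x)-\tilde X(y)|^2$ is meant to bound, or where a term of favorable sign would come from. Absorbing the normal-mismatch terms ``by the Jacobi gradient term'' presupposes Step~1.
\item The compactness step (Chaudhuri--Trudinger, Savin, a chain of balls) only propagates a tiny-ball bound through doubling, so it inherits the same gap.
\end{itemize}

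The missing idea is to stop doubling the curvature altogether. Following Li--Wu, the paper solves $\sigma_2(\kappa[v])=\tfrac14$ in $B_1$ with $v=u$ on $\partial B_1$, so $v>u$. It then propagates only the \emph{separation} of the two graphs, measured by the Euclidean distance $d$ from $X_v(y)$ to the hypograph of $u$. The vertical gap itself admits no one-point inequality, by Remark~\ref{rem:intrinsic-one-point-obstruction}. At a closest pair, the rotation $\mathcal R$ between the tangent spaces, the compression inequality (Lemma~\ref{lem:compression}) and the mixed G{\aa}rding inequality give, in the upper-support sense,
\[
 \Delta_{T_v}(-\log d)\ge\frac1{2(n-1)}|\nabla\log d|_{T_v}^2+\frac{1}{4d}.
\]
The gradient coefficient is positive in every dimension, and the zero-order term comes from the strict gap $\tfrac14<1$.

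\begin{itemize}
\item A torsion comparison (Lemma~\ref{lem:seed}) gives a seed ball.
\item The doubling argument with $\Phi$ is then run on $-\log d$, not on curvature (Theorem~\ref{thm:separation}). This gives $v-u\ge c_0$ on $B_{1/2}$.
\item Curvature enters only through the two-surface Pogorelov estimate (Theorem~\ref{thm:comparison-pogorelov}) with cutoff $v-u-\delta$. There negative $\Theta_i$ are harmless, because $G_i=-\frac{\beta}{2\eta}\eta_i$ with $|\eta_i|\le C(K)$, and bad directions have $|\kappa_i|>H/8$.
\item Hence the bad gradient terms are at most $C\beta^2(H\eta)^{-2}T_u^{ij}h_i^kh_{kj}$. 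They are absorbed by the $\frac1{n-2}T_u^{ij}h_i^kh_{kj}$ produced by the $\frac{n-1}{n-2}\log W$ term, while mixed G{\aa}rding gives the $v$-curvature term a good sign.
\end{itemize}

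This yields a constant $C(n,K)$ directly, with no viscosity limit, a.e.\ second differentiability, or Savin's theorem.
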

\begin{remark}
For simplicity, we consider only the constant right-hand side
$f\equiv1$. Our method can be extended to
$\sigma_2(\kappa[u])=f(X,\nu)>0$ with smooth $f$, see \cite{QiuYangradient}.
As in the first author's three-dimensional estimate \cite{Qiu3D},
the expected constants depend on the dimension, the $C^1$ bound
for the graph, $\|f\|_{C^2}$, and $\|f^{-1}\|_{L^\infty}$.
Chen, Zhou and Zhu recently established the corresponding
Schauder regularity for the quadratic Hessian equation on the
full $\Gamma_2$ branch \cite{ChenZhouZhu2026}.
The analogous problem for the graphical scalar curvature equation
is not resolved by the present argument.
\end{remark}

We first recall the idea for the $\sigma_2$-Hessian equation. Mooney observed in
\cite[Remark~2.6]{MooneyRemarks} that the Chou--Wang Pogorelov
estimate requires only the one-sided condition $L_u (v-u)\geq -1$
for a comparison function $v>u$ agreeing with $u$ on the
boundary, where $L_u$ denotes the linearization of
$\sigma_2^{1/2}$ at $u$. This suggests constructing $v$ by
solving the linearized equation $L_uv=0$ with boundary data
$u$. To obtain an interior Hessian estimate, one then needs
quantitative control of the gap $v-u$.

Li and Wu's key new idea is to replace the linearized Dirichlet
problem by a nonlinear comparison problem. Given a solution
of $\sigma_2(D^2u)=1$, they solve
\[
 \sigma_2(D^2v)=\frac14\quad\text{in }B_{3/2},
 \qquad
 v=u\quad\text{on }\partial B_{3/2}.
\]
This choice makes both Trudinger's interior gradient control \cite{trudinger1997weak} and the
required one-sided linearized inequalities available.
Indeed, writing $L_v$ for the linearization of
$\sigma_2^{1/2}$ at $v$, concavity and homogeneity give
\[
 L_u(v-u)\geq-\frac12,
 \qquad
 L_v(v-u)\leq-\frac12.
\]
With these gradient bounds and linearized inequalities,
the Chou--Wang  Pogorelov estimate \cite{ChouWang}  reduces the problem to
establishing uniform separation of $v$ from $u$ on $B_{1/2}$. Li and Wu obtained this separation by first establishing a
positive lower bound on a small interior ball through a localized
integration-by-parts argument, and then propagating it to
$B_{1/2}$ using exponential barriers along a finite chain of
balls \cite{LiWu}. The propagation step uses the same basic
maximum-principle mechanism as the first author's doubling
argument \cite{GuanQiu,QiuHessian}, formulated through exponential
barrier comparisons. In the present paper, we adapt the first
author's direct maximum-principle argument to propagate separation
for the graphical curvature equation. Appendix~\ref{appendix}
gives the corresponding direct proof for the $\sigma_2$-Hessian
equation. In both settings, a single auxiliary function propagates
the lower bound from a small interior ball to $B_{1/2}$, without
iteration along a chain of balls; see
Theorems~\ref{thm:separation} and~\ref{thm:single-maximum}.

We adapt this nonlinear comparison idea to
\eqref{eq:intro-equation}.  On \(B_1\) we solve
\begin{equation}\label{eq:intro-auxiliary}
 \sigma_2(\kappa[v])=\frac14,
 \qquad
 v=u\quad\text{on }\partial B_1.
\end{equation}
Ivochkina's Dirichlet theory \cite{Ivochkina} provides a smooth admissible
solution. The curvature normalization and
boundary compatibility conditions are verified in the proof
of Theorem~\ref{thm:main}. Comparison gives uniqueness and
$v>u$ in $B_1$. Korevaar's interior gradient estimate \cite{Korevaar} then
bounds the gradient of $v$ on a fixed interior subdomain,
with a bound depending only on $n$ and $K$.

The Li--Wu argument does not extend directly to the curvature
equation. The graphical curvature operator depends on both
$D^2u$ and $Du$. At the same base point, the two graphs have
different induced metrics and normals. The resulting curvature terms have no fixed sign. In fact,
Remark~\ref{rem:intrinsic-one-point-obstruction} gives admissible jets with
the two prescribed right-hand sides and bounded gradients for which both
endpoint linearizations of \(v-u\) have arbitrarily large values of the wrong
sign. Thus the endpoint linearization argument used in the Hessian
setting does not directly yield the required Jacobi inequality
for $-\log(v-u)$.

We overcome this obstruction by replacing the vertical gap with a geometric
one.  Let $d(y)$ be the Euclidean distance from $(y,v(y))$
to the closed hypograph of $u$. Under the Lipschitz bounds, \(d\) is
quantitatively equivalent to \(v-u\).  If \(T_v\) denotes the first Newton
tensor of the upper graph, then the following inequality holds in the
two-surface upper-support sense at every interior closest pair:
\begin{equation}\label{eq:intro-distance-jacobi}
 T_v^{ij}\nabla^2_{ij}(-\log d)
 \geq
 \frac1{2(n-1)}
 T_v^{ij}\nabla_i(-\log d)\nabla_j(-\log d)
 +\frac1{4d}.
\end{equation}

The geometry behind \eqref{eq:intro-distance-jacobi} is that
of parallel hypersurfaces. With our downward-normal convention, the upper parallel
hypersurface to the lower graph is
\[
 X_d=X-dN,
\]
where $N$ is the downward unit normal of the lower graph.
Before the first focal point, its shape operator is
\[
 A_d=A(I-dA)^{-1}.
\]
This formula appears naturally in the two-surface
second-variation argument
\cite{Eschenburg1987,HeintzeKarcher1978}.
Fix a point on the upper graph and a nearest point on the
lower graph. Their tangent spaces need not agree.
The natural rotation identifies these spaces isometrically.
The mixed second variation also involves the orthogonal
projection between them, which may degenerate in one direction.
A rank-one contribution from the gradient term compensates
for this loss. The compression inequality shows that the
corrected matrix lies in $\Gamma_2$ and gives a quantitative
lower bound for its $\sigma_2$. The mixed G{\aa}rding inequality
then converts the strict curvature gap into the positive term
in \eqref{eq:intro-distance-jacobi}.

A smooth torsion comparison uses this positive term to give
a quantitative lower bound for $d$ at an interior point.
Since $d$ has a controlled Lipschitz constant, this lower
bound extends to a fixed ball. An adaptation of the maximum-principle argument from \cite{GuanQiu,QiuHessian} to $-\log d$
then propagates the bound to $B_{1/2}$.
Its test function combines the graphical support-function term
introduced in \cite{GuanQiu} with a gradient correction weighted
by the horizontal distance to the cutoff center. This distance
weight generates a positive quadratic curvature term that absorbs
the gradient-dependent error. This establishes uniform separation between the two graphs.

The separation is then used in a two-surface Pogorelov estimate.
At a product-space maximum, the natural rotation identifies
the two tangent spaces. A directional Jacobi inequality for
a suitably weighted mean curvature and the mixed G{\aa}rding
inequality control the second-order terms at the paired points.
The resulting weighted estimate gives the interior curvature
bound, because uniform separation keeps the cutoff uniformly
positive on $B_{1/2}$.

Two-point and product-space maximum principles have a long
history in nonlinear PDE and geometric analysis.
Korevaar and Kennington developed convexity maximum principles,
systematically treated by Kawohl, while Rosay and Rudin
established a related principle for the convexity of level sets
\cite{Kawohl1985,Kennington1985,Korevaar1983,RosayRudin1989}.
Andrews and Clutterbuck developed two-point estimates for
moduli of continuity and later obtained a sharp log-concavity
comparison that led to their proof of the fundamental gap
conjecture \cite{AndrewsClutterbuck2009,AndrewsClutterbuck2011}.
For curve shortening flow, Huisken established a chord--arc
comparison, which Andrews and Bryan sharpened to obtain
curvature bounds and a direct proof of Grayson's theorem
\cite{AndrewsBryan2011,Huisken1998}.
Andrews gave a direct two-point proof of noncollapsing
for mean-convex mean curvature flow \cite{Andrews2012}.
Andrews, Langford and McCoy extended this approach to
fully nonlinear curvature flows and gave a product-space
proof of the known containment principle
\cite{AndrewsLangfordMcCoy2013}.
Andrews and Langford obtained further noncollapsing estimates
under inverse-concavity by optimizing paired second-derivative
directions \cite{AndrewsLangford2016}.
In the static setting, Brendle used a curvature-weighted
two-point function to resolve the Lawson conjecture
\cite{BrendleLawson2013}.
He also combined two-point differential inequalities with
integral estimates to obtain an asymptotically sharp
inscribed-radius bound under mean-convex mean curvature
flow \cite{BrendleInscribed2015}.
More recently, Colding and Minicozzi derived local elliptic
and parabolic inequalities for distances between hypersurfaces
in minimal-surface and mean-curvature-flow settings, using
direct Hessian calculations and supporting barriers at
nonsmooth points \cite{ColdingMinicozzi2026}.

Our two-surface Pogorelov estimate differs from the one-point
estimates of Urbas, Chou--Wang, and Sheng--Urbas--Wang
\cite{ChouWang,ShengUrbasWang2004,Urbas2000}.
The curvature estimate itself uses second-order information
at paired points on two distinct hypersurfaces.

The paper is organized as follows.
Section~\ref{sec:preliminaries} records the geometric conventions,
the identities for the first Newton tensor, and the mixed
G{\aa}rding inequality. Section~\ref{sec:separation} establishes uniform separation between the comparison graphs. To this end,
Subsection~\ref{subsec:The distance function} compares the ambient
distance with the vertical gap and explains the obstruction to a
direct one-point comparison. The algebraic compression inequality is proved in Subsection~\ref{subsec:The compression inequality} and used in
Subsection~\ref{subsec:The two-surface inequality} to derive the
distance Jacobi inequality in the upper-support sense.
Subsection~\ref{Subsec:A positive lower bound at some point} then
obtains a positive lower bound at an interior point by a torsion
comparison. A modified auxiliary function and a doubling argument
propagate this bound and complete the separation estimate. Section~\ref{sec:pogorelov} establishes the comparison Pogorelov
estimate and applies it to the interior curvature problem.
More precisely, Subsection~\ref{subsec:jacobi} derives the directional
mean-curvature Jacobi inequality and the estimates needed to control
its unfavorable terms. Subsection~\ref{Subsec:The two-surface Pogorelov estimate} establishes the two-surface Pogorelov estimate through a product-space maximum principle. Subsection~\ref{subsec:main-proof} then proves boundary decay of the comparison gap, verifies the solvability of the auxiliary Dirichlet problem, and combines these results with uniform separation to complete the proof of Theorem~\ref{thm:main}. Finally, Appendix~\ref{appendix} gives a direct maximum-principle proof
of the corresponding doubling estimate for the \(2\)-Hessian equation.

\section{Preliminaries}\label{sec:preliminaries}
We first fix the geometric conventions used throughout the paper.
Let
\[
 X(x)=(x,u(x)),\qquad
 W=(1+|Du|^2)^{1/2},\qquad
 \nu=\frac{(Du,-1)}{W},
\]
and let
\[
 E_{n+1}=(0,\ldots,0,-1)
\]
be the downward vertical unit vector.
Thus $\nu$ is the downward unit normal. We use the sign convention
\[
    g_{ij}=\delta_{ij}+u_i u_j,
    \qquad
    h_{ij}=-\langle \overline\nabla_{X_i}X_j,\nu\rangle
           =\frac{u_{ij}}{W}.
\]
In particular, \(\overline\nabla_{X_i}\nu=h_i^jX_j\).  Covariant
derivatives with respect to \(g\) are denoted by subscripts; in particular,
\(h_{ijk}=\nabla_kh_{ij}\).  The Codazzi equations give
\(h_{ijk}=h_{ikj}\).

Set
\[
    H=g^{ij}h_{ij},\qquad
    T_{u}^{ij}=\frac{\partial\sigma _2}{\partial h_{ij}}
          =Hg^{ij}-h^{ij}.
\]
We always work on the admissible branch \(\kappa[u]\in\Gamma _2\).  Hence
\((T_u^{ij})\) is positive definite.

The first Newton tensor is divergence-free and satisfies
\begin{equation}
    \nabla_i T_u^{ij}=0,
    \qquad
    T_u^{ij}g_{ij}=(n-1)H,
    \qquad
    T_u^{ij}h_{ij}=2 \sigma_2.
    \label{tor:newton-identities}
\end{equation}
We write
\[
    \Delta_{T_u} \phi=T_u^{ij}\nabla^2_{ij}\phi,
    \qquad
    |\nabla\phi|_{T_u}^2=T_u^{ij}\phi_i\phi_j,
    \qquad
    \langle\nabla\phi,\nabla\psi\rangle_{T_u}
       =T_u^{ij}\phi_i\psi_j.
\]

We shall also use the following form of the G{\aa}rding
inequality from \cite{Garding1959}:
\begin{equation}\label{eq:mixed-garding-prelim}
 T_1(A):B
 \geq 2\sqrt{\sigma_2(A)\sigma_2(B)},
 \qquad A,B\in\Gamma_2,
\end{equation}
where \(T_1(A)=\sigma_1(A)I-A\) is the first Newton tensor of \(A\).

\section{Uniform separation} \label{sec:separation}
Throughout this section, \(n\geq3\).  Suppose that
\(u\in C^4(B_{3/2})\) satisfies
\begin{equation}\label{u}
    \sigma_2(\kappa[u])=\lambda_u
    \quad\text{in } B_{3/2},
    \qquad
    \kappa[u]\in\Gamma_2,
\end{equation}
and that
\[
    \lVert Du\rVert_{L^\infty(B_{3/2})}\leq K.
\]
Let \(0<\lambda_v<\lambda_u\), and let
\(v\in C^4(B_1)\cap C^2(\overline B_1)\) be an admissible solution of
\begin{equation}\label{v}
    \begin{cases}
\sigma_2(\kappa[v])=\lambda_v,
            & \text{in } B_1,\\
        \kappa[v]\in\Gamma_2,
            & \text{in } B_1,\\
        v=u,
            & \text{on } \partial B_1.
    \end{cases}
\end{equation}
Since \(\lambda_v<\lambda_u\), the strong comparison principle gives
\[
    v>u
    \quad\text{in } B_1.
\]
Moreover, \(H_v>0\), so \(v\) cannot have an interior maximum.  Hence
\[
 \operatorname{osc}_{B_1}v
 \leq \operatorname{osc}_{B_1}u
 \leq 2K.
\]
Korevaar's interior gradient estimate therefore gives
\[
 \lVert Dv\rVert_{L^\infty(B_{7/8})}
 \leq C(n,K,\lambda_v).
\]
After replacing \(K\) by this larger constant, we assume below that
\(\lVert Dv\rVert_{L^\infty(B_{7/8})}\leq K\).

Write
\[
 X_u(x)=(x,u(x)),\qquad X_v(x)=(x,v(x)),
 \qquad \Sigma_u=X_u(B_1),\qquad \Sigma_v=X_v(B_1),
\]
and denote the downward unit normals by \(N_u,N_v\).  Thus
\[
 N_u=\frac{(Du,-1)}{W_u},\qquad
 N_v=\frac{(Dv,-1)}{W_v}.
\]
On \(\Sigma_v\), let
\[
 T_v^{ij}=H_vg_v^{ij}-h_v^{ij},
 \qquad
 \Delta_{T_v} f=T_v^{ij}\nabla^2_{ij}f.
\]
We use the convention
\[
 h(U,V)=-\langle\overline\nabla_UV,N\rangle,
 \qquad
 \overline\nabla_UN=AU.
\]
Repeated indices are summed, and all covariant derivatives below are
taken on \(\Sigma_v\).

Define the closed hypograph of \(u\) by
\[
    E_u
    =
    \bigl\{(x,z):x\in\overline{B_1},\ z\leq u(x)\bigr\},
\]
and, for \(y\in B_1\), set
\[
    d(y)
    =
    \operatorname{dist}\bigl(X_v(y),E_u\bigr).
\]

We shall prove the following quantitative separation estimate.

\begin{theorem}\label{thm:separation}
Suppose that \(u\) and \(v\) satisfy \eqref{u} and \eqref{v}. Then there is \(C=C(n,K,\lambda_u,\lambda_v)\) such that
\begin{equation}
 v(y)-u(y)\geq d(y)\geq C^{-1}
 \qquad (\forall~ y\in B_{1/2}).
\end{equation}
\end{theorem}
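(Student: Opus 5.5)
The inequality \(v-u\geq d\) is elementary. The point \((y,u(y))\) lies in \(E_u\), and its distance from \(X_v(y)\) is \(v(y)-u(y)\). Hence \(d(y)\leq v(y)-u(y)\).

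Conversely, the Lipschitz bounds show that \(d\) is comparable to \(v-u\) on \(B_{7/8}\): the closest point of \(E_u\) lies on the graph of \(u\), so \(d\geq (v-u)/\sqrt{1+K^2}\), up to boundary effects. The real content is therefore the lower bound \(d\geq C^{-1}\) on \(B_{1/2}\), and I will carry it out in three steps, following the outline in the introduction.

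\emph{Step 1: distance Jacobi inequality.} At each \(y\in B_{7/8}\), choose a closest point \(p=X_u(x)\). I will compare \(\Sigma_v\) near \(X_v(y)\) with the parallel surface \(X_u-dN_u\) near \(p\). This surface lies below \(\Sigma_v\) and touches it at \(X_v(y)\), and its shape operator is \(A_u(I-dA_u)^{-1}\). Identifying tangent spaces by the natural rotation and using the two-surface second variation, I will show that \(-\log d\) admits an upper support function at \(y\). That function satisfies
\[
 \Delta_{T_v}(-\log d)\geq \tfrac{1}{2(n-1)}|\nabla\log d|^2_{T_v}+\tfrac1{4d}.
\]
The ingredients are the compression inequality, which puts the corrected matrix (the rotated \(A_u\) plus the rank-one gradient term) in \(\Gamma_2\) with \(\sigma_2\geq\lambda_u\), and the mixed G\aa rding inequality \eqref{eq:mixed-garding-prelim}. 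Together these give \(T_v:\tilde A_u\geq 2\sqrt{\lambda_u\lambda_v}\), while \(T_v:A_v=2\lambda_v\). The gap \(2\sqrt{\lambda_v}(\sqrt{\lambda_u}-\sqrt{\lambda_v})\) yields the \(1/d\) term. I expect this step to be the main obstacle. The projection between the two tangent spaces can degenerate in one direction, and the rank-one gradient term has to compensate exactly there.

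\emph{Step 2: lower bound at one point.} Let \(w\) solve the torsion problem \(\Delta_{T_v}w=-1\) on a small ball \(B_r\subset B_{7/8}\) with zero boundary data. Since \(T_v\) is not uniformly controlled, I will instead use an explicit smooth barrier, such as a quadratic in the graph coordinates. Its \(T_v\)-Laplacian is bounded using \(T_v:g_v=(n-1)H_v\), while \(H_v\) is controlled from below because \(\sigma_2=\lambda_v\) and \(\kappa\in\Gamma_2\). I will compare \(d\) with this barrier in the viscosity/support sense via the inequality from Step 1, arguing that \(d\) cannot be uniformly tiny. The argument goes by contradiction: if \(\sup_{B_r} d\) were small, the term \(1/(4d)\) would dominate the barrier. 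This gives \(d(y_0)\geq c\) at some \(y_0\). The Lipschitz bound on \(d\) then spreads this to \(d\geq c/2\) on \(B_\rho(y_0)\).

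\emph{Step 3: propagation to \(B_{1/2}\).} I will adapt the doubling argument of Guan--Qiu and Qiu. Consider
\[
 \Phi=-\log d+\beta\log\bigl(\rho^2-|x-y_0|^2+\ldots\bigr)\ \text{or}\ \Phi=-\log d+\varphi(\langle X_v,\nu_v\rangle\text{-type support term})+\gamma|x-y_0|^2|Dv|^2,
\]
where the cutoff is built from the graphical support function \(X_v\cdot N_v\) relative to the center \(y_0\). This cutoff is positive on a region containing \(B_{1/2}\) and nonpositive outside \(B_{7/8}\). The weighted gradient correction \(\gamma|x-y_0|^2|Dv|^2\) produces a positive term \(\sum T_v^{ii}\kappa_i^2\), which absorbs the gradient-dependent errors from differentiating the support term. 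At an interior maximum of \(\Phi\) on the support region, the support-function inequality from Step 1, the first-order condition, and the good term \(1/(4d)\) together bound \(1/d\) by constants depending on \(n,K,\lambda_u,\lambda_v\) and on the bound \(d\geq c/2\) already established near \(y_0\). A single maximum principle then gives \(d\geq C^{-1}\) on \(B_{1/2}\).
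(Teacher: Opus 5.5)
Your overall architecture matches the paper's. It consists of the distance function, a two-surface Jacobi inequality via the compression inequality and G{\aa}rding, a seed lower bound, and a Qiu-type propagation with a support-function weight and a distance-weighted gradient correction. (Minor point on Step 1: the compression inequality gives only $\sigma_2(B)\ge\frac{n-2+c^2}{n-1}\lambda_u$ with $c=\langle N_u,N_v\rangle$, not $\sigma_2\ge\lambda_u$. Also, the $v$-side term is $-2c\lambda_v/d$. The gap survives because $\sqrt{(n-2+c^2)/(n-1)}\ge\max\{|c|,\sqrt{(n-2)/(n-1)}\}$.)

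\textbf{Step 2 has a genuine gap.} At a touching point, after absorbing the gradient term via $\phi=-a^{-1}\log\psi$ with $a<\frac1{2(n-1)}$, the Jacobi inequality only gives $\Delta_{T_v}\psi\le -c\,d^{a-1}$. To reach a contradiction you need a barrier with $\Delta_{T_v}\psi\ge -C$, where $C$ is independent of $H_v$. For a quadratic in graph coordinates, $\Delta_{T_v}\psi$ is comparable to $-\theta\operatorname{tr}T_v=-(n-1)\theta H_v$, and $H_v$ has no upper bound. The lower bound on $H_v$ that you invoke points the wrong way. The gradient term cannot help either, since $T_v^{11}=H_v-\kappa_1$ may be as small as $\lambda_v/H_v$.

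The torsion function you discarded is exactly what the paper uses. The problem $-\Delta_{T_v}q=1$ on $X_v(B_{1/8})$ is solvable qualitatively because $v$ is smooth, and the bound $\max q\ge C^{-1}$ uses no ellipticity ratio. It comes from integrating by parts with $\nabla_iT_v^{ij}=0$:
\begin{itemize}
\item $\int\eta\,d\mu=-\int q\,\Delta_{T_v}\eta\,d\mu$ and $|\Delta_{T_v}\eta|\le(n-1)H_v|D^2\eta|+2\lambda_v|D\eta|$;
\item $\int_{B_{1/8}}H_v\,dx=\int_{\partial B_{1/8}}\frac{Dv}{W_v}\cdot\nu\,dS\le|\partial B_{1/8}|$, since $H_v=\operatorname{div}(Dv/W_v)$.
\end{itemize}
Comparing $d^a$ with $\theta q$ then yields the seed.

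\textbf{Step 3 does not close as sketched.} At the maximum every error term is of order $H_v$; for example, the cutoff contributes $H_v$ times a negative power of $\rho$. So the term $1/(4d)$ cannot bound $1/d$, and the paper discards it in this step.

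With $-\log d$ entering additively, the gradient term keeps the fixed coefficient $\frac1{2(n-1)}$. Consider the configuration where $z$ is nearly parallel to the top principal direction $e_1$ and $T_v^{11}\sim\lambda_v/H_v$. There the available positive terms are $O(H_v)$ with coefficients independent of $\rho$. They cannot beat the cutoff error when the maximum lies near the edge of the cutoff region.

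The paper instead maximizes $2\log\rho+\beta\Phi+\log\max\{b,S\}$, with the following ingredients:
\begin{itemize}
\item $b=-\log d$;
\item a horizontal cutoff $\rho=\frac49-|z|^2$ (not one built from the support function);
\item $\Phi=v(y)-v(y_0)-z\cdot Dv+|z|(W_v-1)$, with $\beta$ small.
\end{itemize}
Proposition~\ref{prop:distance-jacobi} then gives $\Delta_{T_v}\log b\ge\bigl(\frac{b}{2(n-1)}-1\bigr)|\nabla\log b|_{T_v}^2$, whose coefficient grows with $b$. In the degenerate case, $R_1Z_1>0$ makes the cutoff term and $\beta\kappa_1Z_1$ add in $b_1/b$, which yields $b\rho^2\le C/(\beta^2\lambda_vr^2)$. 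The other case is contradictory once $b>S$ is large and $\beta$ is small.
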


\subsection{The distance function}\label{subsec:The distance function}
We first compare the ambient distance \(d\) quantitatively with the gap \(v-u\) and derive the Lipschitz estimate.
\begin{lemma}\label{lem:distance-gap}
For \(y\in B_1\),
\begin{equation}\label{4.2}
    \frac{v(y)-u(y)}{\sqrt{1+K^2}}
    \leq d(y)
    \leq v(y)-u(y).
\end{equation}
Moreover, \(d\) is \(\sqrt{1+K^2}\)-Lipschitz continuous in
\(B_{7/8}\).
\end{lemma}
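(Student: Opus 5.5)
The upper bound is immediate: since \(v>u\) in \(B_1\), the point \((y,u(y))\) belongs to \(E_u\), so
\[
 d(y)\leq |X_v(y)-(y,u(y))| = v(y)-u(y).
\]

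For the lower bound, I would use the Lipschitz bound \(\|Du\|_{L^\infty}\leq K\) to place a downward cone at \(X_v(y)\) that avoids \(E_u\). Let \((x,z)\in E_u\) with \(x\in\overline B_1\) and \(z\leq u(x)\). Since \(u\) is \(K\)-Lipschitz on the convex set \(\overline B_1\),
\[
 v(y)-z\;\geq\; v(y)-u(x)\;\geq\; v(y)-u(y)-K|x-y|.
\]
Set \(\delta=v(y)-u(y)>0\), \(r=|x-y|\) and \(s=v(y)-z\), so that \(s\geq\delta-Kr\). We want to minimize \(\sqrt{r^2+s^2}\) over all pairs with \(r\geq0\) and \(s\geq \delta-Kr\).

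If \(Kr\geq\delta\), then the distance is at least \(r\geq \delta/K\geq \delta/\sqrt{1+K^2}\). Otherwise \(s\geq\delta-Kr>0\), and by Cauchy--Schwarz
\[
 \delta\leq Kr+s\leq \sqrt{1+K^2}\,\sqrt{r^2+s^2}.
\]
Hence \(|X_v(y)-(x,z)|\geq \delta/\sqrt{1+K^2}\) in either case, and taking the infimum over \(E_u\) gives the lower bound in \eqref{4.2}. Geometrically, this is just the distance from the apex to the cone \(\{s=\delta-Kr\}\).

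For the Lipschitz estimate, \(\operatorname{dist}(\cdot,E_u)\) is \(1\)-Lipschitz on \(\mathbb R^{n+1}\), so for \(y,y'\in B_{7/8}\)
\[
 |d(y)-d(y')|\leq |X_v(y)-X_v(y')|
 =\sqrt{|y-y'|^2+|v(y)-v(y')|^2}.
\]
Since \(B_{7/8}\) is convex and \(\|Dv\|_{L^\infty(B_{7/8})}\leq K\) (after the earlier replacement of \(K\)), we have \(|v(y)-v(y')|\leq K|y-y'|\). This gives the constant \(\sqrt{1+K^2}\).

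There is no real obstacle here. The only point needing care is the bookkeeping of the constant \(K\): the same \(K\) must bound both \(Du\) on \(B_{3/2}\) and \(Dv\) on \(B_{7/8}\). That is exactly why the Lipschitz statement is restricted to \(B_{7/8}\), whereas the two-sided comparison \eqref{4.2} holds on all of \(B_1\), since it uses only the gradient bound for \(u\).
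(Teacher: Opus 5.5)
Your proof is correct and follows essentially the same route as the paper. The upper bound uses the vertical competitor; the lower bound combines the Lipschitz bound for \(u\) with Cauchy--Schwarz \(Kr+s\le\sqrt{1+K^2}\sqrt{r^2+s^2}\), where the paper phrases this as the distance to each vertical ray \(\{(x,z):z\le u(x)\}\) and splits on the sign of \(v(y)-u(x)\); the last claim uses the \(1\)-Lipschitz property of the distance function. Incidentally, your case split is unnecessary, since \(\delta\le Kr+s\le\sqrt{1+K^2}\sqrt{r^2+s^2}\) holds whatever the sign of \(s\).
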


\begin{proof}
Since \((y,u(y))\in E_u\), the vertical competitor immediately gives
\[
    d(y)
    \leq
    \bigl|X_v(y)-(y,u(y))\bigr|
    =
    v(y)-u(y).
\]

To prove the opposite inequality, fix \(x\in\overline{B_1}\). If
\(v(y)>u(x)\), then
\begin{align*}
    v(y)-u(y)
    &=
    v(y)-u(x)+u(x)-u(y)\\
    &\leq
    v(y)-u(x)+K|x-y|\\
    &\leq
    \sqrt{1+K^2}\,
    \sqrt{|x-y|^2+\bigl(v(y)-u(x)\bigr)^2}.
\end{align*}
The last square root is precisely the distance from \(X_v(y)\) to the
vertical ray
\[
    \bigl\{(x,z):z\leq u(x)\bigr\}.
\]

If \(v(y)\leq u(x)\), then
\[
    v(y)-u(y)
    \leq
    u(x)-u(y)
    \leq
    K|x-y|
    \leq
    \sqrt{1+K^2}\,|x-y|,
\]
while \(|x-y|\) is the distance from \(X_v(y)\) to the same vertical
ray. Taking the infimum over \(x\in\overline{B_1}\) yields
\[
    v(y)-u(y)
    \leq
    \sqrt{1+K^2}\,d(y),
\]
which proves \eqref{4.2}.

Finally, the distance to a closed set is \(1\)-Lipschitz in the ambient
Euclidean space. Hence, for \(y_1,y_2\in B_{7/8}\),
\begin{align*}
    |d(y_1)-d(y_2)|
    &\leq
    |X_v(y_1)-X_v(y_2)|\\
    &=
    \sqrt{|y_1-y_2|^2
    +|v(y_1)-v(y_2)|^2}\\
    &\leq
    \sqrt{1+K^2}\,|y_1-y_2|,
\end{align*}
where the last inequality follows from
\(\lVert Dv\rVert_{L^\infty(B_{7/8})}\leq K\).
\end{proof}

\begin{remark}\label{rem:intrinsic-one-point-obstruction}
Neither of the inequalities
\begin{equation*}
T_v^{ij}\nabla_{ij}^v(v-u)\leq C,
\qquad
T_u^{ij}\nabla_{ij}^u(v-u)\geq-C
\end{equation*}
is a pointwise consequence of the two curvature equations,
admissibility, and the gradient bounds. Here \(\nabla^u\) and
\(\nabla^v\) denote the Levi--Civita connections of the graphs of
\(u\) and \(v\), respectively.

Indeed, for \(N>1\), consider near the origin in \(\mathbb R^3\)
\begin{align*}
u_N(x)
&=
\sqrt3\,x_1
+\frac12\left[
 \left(\frac{8}{5N}-\frac{32N}{5}\right)x_1^2
 +2Nx_2^2+8Nx_3^2
\right],\\
v_N(x)
&=
1+\frac12\left[
 Nx_1^2
 +\left(\frac{1}{16N}-\frac{3N}{4}\right)x_2^2
 +3Nx_3^2
\right].
\end{align*}
At the origin,
\begin{equation*}
Du_N=\sqrt3\,e_1,
\qquad
Dv_N=0,
\qquad
W_u=2,
\qquad
W_v=1.
\end{equation*}
The corresponding principal curvatures are
\begin{equation*}
\kappa[u_N]
=
\left(
 \frac{1}{5N}-\frac{4N}{5},\,N,\,4N
\right),
\qquad
\kappa[v_N]
=
\left(
 N,\,\frac{1}{16N}-\frac{3N}{4},\,3N
\right).
\end{equation*}
Consequently,
\begin{equation*}
\sigma_2(\kappa[u_N])=1,
\qquad
\sigma_2(\kappa[v_N])=\frac14,
\end{equation*}
while
\begin{equation*}
H[u_N]=\frac{1}{5N}+\frac{21N}{5}>0,
\qquad
H[v_N]=\frac{1}{16N}+\frac{13N}{4}>0.
\end{equation*}
Thus both curvature vectors belong to \(\Gamma_2\).

Set
\begin{equation*}
\rho_N:=v_N-u_N.
\end{equation*}
For a graph \(z\), its Christoffel symbols in graph coordinates are
\begin{equation*}
\Gamma_{ij}^k[z]
=
\frac{z_kz_{ij}}{1+|Dz|^2}.
\end{equation*}
Since \(D\rho_N(0)=-\sqrt3\,e_1\), at the origin we obtain
\begin{equation*}
\bigl(\nabla^{2}_u\rho_N\bigr)_{ij}
=
\left(D^2v_N-\frac14D^2u_N\right)_{ij},
\qquad
\bigl(\nabla^{2}_v\rho_N\bigr)_{ij}
=
\left(D^2v_N-D^2u_N\right)_{ij}.
\end{equation*}
Contracting with the corresponding Newton tensors gives
\begin{align*}
T_u^{ij}\nabla_{ij}^u\rho_N
&=
\frac{-28+N^{-2}-44N^2}{80}
\longrightarrow-\infty,\\
T_v^{ij}\nabla_{ij}^v\rho_N
&=
\frac{-32-N^{-2}+44N^2}{10}
\longrightarrow+\infty.
\end{align*}

For \(n>3\), let \(u_N\) and \(v_N\) be independent of
\(x_4,\ldots,x_n\). The additional principal curvatures are zero, and
all the preceding identities remain unchanged.
\end{remark}

\subsection{The compression inequality}\label{subsec:The compression inequality}
Remark~\ref{rem:intrinsic-one-point-obstruction} shows that a direct
one-point calculation for the vertical gap is not available.  We
therefore retain the two contact points.  The resulting second-variation
calculation leads to a matrix of the form \(M(I-M)^{-1}\), and the
following lemma controls the \((n-1)\)-dimensional block that appears
in this calculation.
\begin{lemma}\label{lem:compression}
Let \(n\geq3\).  Let \(M\in\Gamma_2\) be symmetric and suppose that
\(I-M>0\).  Set
\[
 D=M(I-M)^{-1}.
\]
Then, in any orthonormal basis,
\begin{equation}\label{eq:compression}
 \sum_{2\leq\alpha<\beta\leq n}
 \left(D_{\alpha\alpha}D_{\beta\beta}-D_{\alpha\beta}^2\right)
 +\frac{n-2}{n-1}\sum_{\alpha=2}^nD_{\alpha\alpha}
 \geq \frac{n-2}{n-1}\sigma_2(M).
\end{equation}
\end{lemma}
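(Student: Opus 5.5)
The plan is to diagonalize \(M\) and reduce \eqref{eq:compression} to a scalar inequality for the eigenvalues. Since \(D\) is a function of \(M\), the two matrices share an orthonormal eigenbasis \(\{f_i\}\). Write \(m_i\) for the eigenvalues of \(M\), so \(m_i<1\) because \(I-M>0\), and set \(d_i=m_i/(1-m_i)\). Let \(\xi_i=\langle e_1,f_i\rangle\), where \(e_1\) is the first vector of the given basis, so \(\sum_i\xi_i^2=1\). Let \(D'\) denote the compression of \(D\) to \(e_1^\perp\). The left side of \eqref{eq:compression} is then \(\sigma_2(D')+c\,\sigma_1(D')\) with \(c=\frac{n-2}{n-1}\).

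\emph{Step 1 (eigenvalue formulas for the compression).} I will verify the standard identities
\[
 \sigma_1(D')=\sum_i d_i(1-\xi_i^2),
 \qquad
 \sigma_2(D')=\sum_{i<j}d_id_j\bigl(1-\xi_i^2-\xi_j^2\bigr).
\]
For the second, use \(\sigma_2(D')=\sigma_2(D)-\langle T_1(D)e_1,e_1\rangle\). This follows from expanding \(\sigma_2\) along the first row and column, and \(T_1(D)\) has eigenvalues \(\sigma_1(D)-d_i\).

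\emph{Step 2 (reduction to a coordinate hyperplane).} Both expressions in Step 1 are affine in the weights \(w_i=\xi_i^2\), which range over the standard simplex. Hence the left side of \eqref{eq:compression} is minimized at a vertex \(w=e_k\). It therefore suffices to show, for each \(k\), writing \(d|k\) and \(m|k\) for the vectors with the \(k\)-th entry deleted,
\[
 \sigma_2(d|k)+c\,\sigma_1(d|k)\;\geq\;c\,\sigma_2(m).
\]
In other words, it suffices to treat the case where \(e_1\) is an eigenvector of \(M\).

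\emph{Step 3 (the scalar inequality).} Set \(t=\frac1{n-1}\). A direct expansion gives
\[
 \sigma_2(x+t\mathbf 1)=\sigma_2(x)+c\,\sigma_1(x)+\tfrac{n-2}{2(n-1)}
\]
for \(x\in\mathbb R^{n-1}\), so the left side equals \(\sigma_2(d|k+t\mathbf 1)-\frac{n-2}{2(n-1)}\).

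Next, \(d_i=m_i+\frac{m_i^2}{1-m_i}\geq m_i\), so \(d|k\geq m|k\) componentwise. I will check that \(m|k+t\mathbf 1\in\Gamma_2(\mathbb R^{n-1})\). This should follow from \(\sigma_1(m|k)>0\), which holds for \(m\in\Gamma_2\), together with the shift by \(t\). Then \(\sigma_2\) is monotone increasing along this order inside \(\Gamma_2\), which gives
\[
 \sigma_2(d|k)+c\,\sigma_1(d|k)\geq\sigma_2(m|k)+c\,\sigma_1(m|k).
\]
Using \(\sigma_2(m)=\sigma_2(m|k)+m_k\sigma_1(m|k)\), the target reduces to
\[
 (1-c)\,\sigma_2(m|k)+c\,(1-m_k)\,\sigma_1(m|k)\geq0.
\]

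\emph{Main obstacle.} The last inequality is clear when \(\sigma_2(m|k)\geq0\), since \(1-m_k>0\) and \(\sigma_1(m|k)>0\). It can fail with the crude bound \(d\geq m\) when \(\sigma_2(m|k)<0\), which forces \(m_k\) to be large (between roughly \(c\) and \(1\)).

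In that regime I will not discard the gain \(\frac{m_i^2}{1-m_i}\). When \(m_k\) is close to \(1\), the constraints \(\sigma_2(m)>0\) and \(m_i<1\) force the negative entries of \(m|k\) to be controlled by the positive ones. The quadratic gains \(\frac{m_i^2}{1-m_i}\) on the large positive entries then dominate the deficit in \(\sigma_2(m|k)\).

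Concretely, I would first try to minimize \(\sigma_2(d|k)+c\,\sigma_1(d|k)-c\,\sigma_2(m)\) over the region \(\{m\in\Gamma_2,\ m_i<1\}\). By symmetry and a Lagrange/convexity argument, I expect the extremal configurations to be those where all entries other than one negative entry are equal. This reduces the problem to a two-variable inequality, which I would verify by hand. The constant \(c=\frac{n-2}{n-1}\) should be exactly what makes the borderline case \(m_k\to1\) balance, so this is where the proof needs the most care.
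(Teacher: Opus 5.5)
Your Steps 1 and 2 are the paper's reduction. The weights $w_j=\langle e_1,\xi_j\rangle^2$ give $\sigma_k(D')=\sum_j w_j\sigma_k(d|j)$ for $k=1,2$; the paper reads this off the $(1,1)$-cofactor of $zI-D$. Affinity in $w$ then reduces the lemma to $\sigma_2(d|k)+c\,\sigma_1(d|k)\ge c\,\sigma_2(m)$ for each $k$. (One slip: the identity you quote should be $\sigma_2(D')=\sigma_2(D)-\langle T_1(D)e_1,De_1\rangle$. Your displayed formula for $\sigma_2(D')$ is nonetheless correct.)

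\textbf{The gap.} That per-index inequality is the entire difficulty, and your proposal does not prove it. The comparison $d\ge m$ leaves $(1-c)\sigma_2(m|k)+c(1-m_k)\sigma_1(m|k)\ge0$, which is false. What follows is only a conjectured description of extremal configurations, and the heuristic behind it is misdirected.
\begin{itemize}
\item The crude bound already fails when every entry of $m|k$ is small. Take $n=3$, $k=3$, $m=(2b,-b,s)$ with $b$ small and $1-2b<s<1$. Then $m\in\Gamma_2$, since $\sigma_2(m)=b(s-2b)>0$, yet your reduced quantity is $\tfrac b2(1-s-2b)<0$. The true value is $F=\tfrac b2(1-s)+\tfrac32b^2+O(b^3)$. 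What rescues it is the second-order gain $c\sum_i m_i^2/(1-m_i)\approx\tfrac52b^2$ on \emph{small} entries, not a gain on large positive ones.
\item For fixed $m_k$ and fixed $\sum_{i\ne k}m_i$, the true minimizer has all remaining entries equal. Configurations with a distinguished negative entry in $m|k$ are therefore not extremal.
\item Your claim $m|k+t\mathbf 1\in\overline{\Gamma_2}$ needs $m_i<1$ and $\sigma_2(m)>0$, not just $\sigma_1(m|k)>0$. It is true but sharp: for $n=3$, $m\to(1,-\tfrac12,1)$ gives $\sigma_2(m|3+t\mathbf 1)\to0$.
\end{itemize}

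\textbf{The missing idea.} The paper uses a smoothing (Schur-convexity) argument. Fix $m_k$ and regard $F=\sigma_2(d|k)+c\,\sigma_1(d|k)-c\,\sigma_2(m)$ as a function of the other $n-1$ eigenvalues. A direct computation gives
\[
 \frac{\partial_rF-\partial_sF}{m_r-m_s}
 =c+\frac{1+(2-m_r-m_s)\Bigl(\sum_{l\ne k,r,s}\frac1{1-m_l}-\frac{(n-2)^2}{n-1}\Bigr)}{(1-m_r)^2(1-m_s)^2}.
\]
Cauchy--Schwarz, together with $\sum_{i\ne k}m_i>0$ and $m_l<1$, bounds the numerator below by the perfect square $[(n-1)-(n-2)(2-m_r-m_s)]^2/[(n-1)(n-3+m_r+m_s)]$. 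This is exactly where the value $c=\frac{n-2}{n-1}$ is used.

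Averaging two entries preserves both constraints, so it never increases $F$. Hence $F\ge F(q,\dots,q)$ with $q=\frac1{n-1}\sum_{i\ne k}m_i\in(0,1)$, and
\[
 F(q,\dots,q)=\frac{(n-2)q^2}{2(1-q)^2}\bigl[(n-2)q(2-q)+3-2q\bigr]+(n-2)(1-m_k)q\ge0.
\]
Without this step, or an equivalent complete argument for the per-index inequality, the proposal does not prove the lemma.
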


\begin{proof}
Fix the orthonormal basis \(\{e_1,\ldots,e_n\}\) from the statement and set
\[
 D'=(D_{\alpha\beta})_{2\leq\alpha,\beta\leq n}.
\]
Choose an orthonormal eigenbasis \(\{\xi_j\}\) of \(M\) and write
\[
 M\xi_j=\mu_j\xi_j,
 \qquad \tau_j=\frac{\mu_j}{1-\mu_j},
 \qquad w_j=\langle e_1,\xi_j\rangle^2.
\]
Then \(D\xi_j=\tau_j\xi_j\), \(w_j\geq0\), and \(\sum_jw_j=1\).
The \((1,1)\)-cofactor of \(zI_n-D\) gives the polynomial identity
\[
 \det(zI_{n-1}-D')
 =\langle\operatorname{adj}(zI_n-D)e_1,e_1\rangle
 =\sum_{j=1}^nw_j\prod_{i\neq j}(z-\tau_i).
\]
Comparing coefficients yields
\[
 \sigma_k(D')=\sum_{j=1}^nw_j\sigma_k(\tau|j),
 \qquad k=1,2.
\]
Therefore,
\[
 \sigma_2(D')+\frac{n-2}{n-1}\sigma_1(D')
 =\sum_{j=1}^nw_j
 \left[
  \sigma_2(\tau|j)+\frac{n-2}{n-1}\sigma_1(\tau|j)
 \right].
\]
Since \(w_j\geq0\) and \(\sum_jw_j=1\), it is enough to prove the
following inequality for each \(j\):
\begin{equation}\label{eq:deleted-compression}
 \sigma_2(\tau|j)+\frac{n-2}{n-1}\sigma_1(\tau|j)
 \geq\frac{n-2}{n-1}\sigma_2(M).
\end{equation}

Fix \(j\).  Positivity of the first Newton tensor of \(M\) gives
\(\sum_{i\neq j}\mu_i>0\), while \(I-M>0\) gives \(\mu_i<1\) for every
\(i\).  Keep \(\mu_j\) fixed and regard the remaining eigenvalues as
variables.  Define
\[
 F:=\sigma_2(\tau|j)+\frac{n-2}{n-1}\sigma_1(\tau|j)
 -\frac{n-2}{n-1}\sigma_2(M).
\]
Then \eqref{eq:deleted-compression} is exactly the assertion \(F\geq0\).
For distinct \(r,s\neq j\), direct differentiation gives
\[
 \frac{\partial_rF-\partial_sF}{\mu_r-\mu_s}
 =\frac{n-2}{n-1}
 +\frac{1+(2-\mu_r-\mu_s)
 \left(\displaystyle\sum_{k\neq j,r,s}\frac1{1-\mu_k}
       -\frac{(n-2)^2}{n-1}\right)}
 {(1-\mu_r)^2(1-\mu_s)^2},
\]
where the quotient is understood by continuity if \(\mu_r=\mu_s\).
Since $\sum_{i\neq j}\mu_i>0$ and $\mu_k<1$, we have
\[
 n-3+\mu_r+\mu_s
 >
 \sum_{k\neq j,r,s}(1-\mu_k)
 \geq0.
\]
For $n>3$, the Cauchy--Schwarz inequality therefore gives
\[
 \sum_{k\neq j,r,s}\frac1{1-\mu_k}
 \geq
 \frac{(n-3)^2}{n-3+\mu_r+\mu_s}.
\]
For $n=3$, the sum is empty and both sides of this inequality
are zero, so the same bound holds.  Consequently,
\begin{align*}
 &1+(2-\mu_r-\mu_s)
 \left(\sum_{k\neq j,r,s}\frac1{1-\mu_k}
       -\frac{(n-2)^2}{n-1}\right)\\
 &\qquad\geq
 \frac{[(n-1)-(n-2)(2-\mu_r-\mu_s)]^2}
 {(n-1)(n-3+\mu_r+\mu_s)}\geq0.
\end{align*}
Thus \((\mu_r-\mu_s)(\partial_rF-\partial_sF)\geq0\). Consequently, replacing any two entries by their average while leaving
the other entries unchanged cannot increase \(F\).
Repeatedly applying this operation to a largest and a smallest entry
produces a sequence converging to the constant vector, and therefore
\[
 F\geq F(q,\ldots,q),
 \qquad q=\frac1{n-1}\sum_{i\neq j}\mu_i\in(0,1).
\]
A direct calculation now yields
\[
 F(q,\ldots,q)
 =\frac{(n-2)q^2}{2(1-q)^2}
 \bigl[(n-2)q(2-q)+3-2q\bigr]
 +(n-2)(1-\mu_j)q\geq0.
\]
This proves \eqref{eq:deleted-compression}; summing it with weights \(w_j\)
proves \eqref{eq:compression}.
\end{proof}

\subsection{The two-surface inequality}\label{subsec:The two-surface inequality}
We now return to the distance \(d\).  Since \(d\) need not be smooth,
we express its differential inequality using a smooth test function at
an interior closest pair.  Lemma~\ref{lem:compression} provides the estimate needed in the second-variation calculation.
\begin{proposition}\label{prop:distance-jacobi}
Let \(X=X_u(x)\in\Sigma_u\) and \(Y=X_v(y)\in\Sigma_v\) be interior
points such that
\[
    |Y-X|=\operatorname{dist}(Y,E_u)=d>0,
\]
and suppose that \(\phi\) is \(C^2\) in a neighborhood of \(Y\) and
\[
    (X',Y')
    \longmapsto
    \phi(Y')+\log|Y'-X'|
\]
has a local minimum at \((X,Y)\). Then
\begin{equation}\label{eq:strong-two-surface-jacobi}
    \Delta_{T_v}\phi
    \geq
    \frac{1}{2(n-1)}
    T_v^{ij}\phi_i\phi_j
    +
    \frac{\sqrt{\lambda_u\lambda_v}-\lambda_v}{d}.
\end{equation}
\end{proposition}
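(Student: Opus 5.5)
I would run the standard two-point (product-space) second-variation argument, in the spirit of Eschenburg and Heintze--Karcher, at the minimum point \((X,Y)\) of \(Q(X',Y')=\phi(Y')+\log|Y'-X'|\).

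\textbf{First-order conditions.} Set \(\mathbf n=(Y-X)/d\). Stationarity in \(X'\) says \(\mathbf n\perp T_X\Sigma_u\). Since \(E_u\) lies below \(\Sigma_u\), the vector \(\mathbf n\) points upward, so \(\mathbf n=-N_u(X)\). Stationarity in \(Y'\) gives \(\nabla\phi(Y)=-d^{-1}\mathbf n^{T}\), where \(\mathbf n^T\) is the projection of \(\mathbf n\) onto \(T_Y\Sigma_v\).

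\textbf{Frames.} Choose an orthonormal frame \(\{e_i\}\) at \(Y\) with \(e_2,\dots,e_n\perp\mathbf n\). These vectors then lie in \(T_X\Sigma_u\) as well. Let \(e_1\) be the remaining direction, so that \(\mathbf n=\cos\theta\,(-N_v)+\sin\theta\, e_1\). Let \(R\) be the natural rotation in the plane of \(N_u,N_v\). It fixes \(e_2,\dots,e_n\) and sends \(e_1\) to a unit vector \(\bar e_1\in T_X\Sigma_u\).

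\textbf{Second variation.} For each \(i\), vary along paired geodesic-type curves \(Y_i(t)\) with initial velocity \(e_i\) and \(X_i(t)\) with initial velocity \(P e_i\). Here \(P\) is a matrix chosen so that the relevant quadratic form is diagonalized. Natural choices are the orthogonal projection onto \(T_X\Sigma_u\), or the rotation \(R\) itself, possibly scaled by \((I-dA_u)^{-1}\) as in the parallel-surface picture.

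Minimality gives \(\frac{d^2}{dt^2}Q\ge0\). Contracting with \(T_v^{ij}\) then yields
\[
\Delta_{T_v}\phi\;\ge\;-T_v^{ij}\,\partial^2_{ij}\log|Y'-X'|.
\]
The right-hand side has three parts:
\begin{itemize}
\item a term \(T_v^{ij}h^v_{ij}\langle N_v,\mathbf n\rangle/d\), which is essentially \(2\lambda_v\cos\theta/d\);
\item a term \(-T_v^{ij}\) applied to the second fundamental form of \(\Sigma_u\) along the paired directions, divided by \(d\);
\item the gradient terms \(d^{-2}(\langle \mathbf n,\cdot\rangle)^2\), which equal \(T_v^{ij}\phi_i\phi_j\) and come with a favorable sign.
\end{itemize}
One also minimizes over \(P\) (a completion of squares), which produces the parallel-surface matrix \(D=M(I-M)^{-1}\) with \(M=dA_u\) restricted and rotated. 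Since \(\Sigma_v\) lies above \(\Sigma_u\) at distance \(d\), the ball of radius \(d\) around \(Y\) avoids \(E_u\). This should give \(I-dA_u>0\) before the focal point, so \(D\) is well defined.

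\textbf{Combining with the compression inequality.} After this optimization, the \(e_2,\dots,e_n\) block is exactly \(D'\) from Lemma~\ref{lem:compression}. The \(e_1\) direction, where the projection degenerates by \(\cos\theta\), is handled by spending a fraction of the gradient term. Writing \(|\nabla\phi|^2_{T_v}\) in this frame, we keep \(\frac1{2(n-1)}\) of it on the left. The remaining part supplies a rank-one correction in the \(e_1\) direction. Together with the \(\frac{n-2}{n-1}\sigma_1(D')\) term, this corrected matrix \(B\) satisfies \(B\in\Gamma_2\) and \(\sigma_2(B)\ge \sigma_2(M)/d^2\ge\lambda_u\) (up to normalization), by Lemma~\ref{lem:compression} and \(\sigma_2(A_u)=\lambda_u\).

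The mixed Gårding inequality \eqref{eq:mixed-garding-prelim} then gives
\[
T_v:B\ \ge\ 2\sqrt{\lambda_v\lambda_u},
\]
while the \(\Sigma_v\) term contributes \(-2\lambda_v\) (using \(\cos\theta\le1\)). With the correct normalization this produces the positive term \((\sqrt{\lambda_u\lambda_v}-\lambda_v)/d\).

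\textbf{Main obstacle.} I expect the hardest step to be the bookkeeping of the mixed second variation: choosing \(P\) so that the \(1\)-direction degeneracy is exactly absorbed by the rank-one gradient contribution, with constant \(\frac1{2(n-1)}\), and checking that the \(\cos\theta\) factors and the \(\frac{n-2}{n-1}\) weights match Lemma~\ref{lem:compression}. My first attempt would be to take \(P\) equal to the rotation \(R\) composed with \((I-dA_u)^{-1}\), and to verify the algebra in a frame where \(A_u\) is diagonal.
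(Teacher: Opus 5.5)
\textbf{Overall structure.} Your outline follows the paper's route. Stationarity gives \(Y-X=-dN_u\) and \(\phi_i=\langle N_u,Y_i\rangle/d\). Minimizing the product Hessian over the \(T_X\Sigma_u\)-component (a Schur complement) produces \(p^{T}Dp\), with \(D=M(I-M)^{-1}\) and \(M=dA_u\). In the frame adapted to the common \((n-1)\)-plane the lower block is \(D'\). Part of the gradient term \(d^{-2}\nu\otimes\nu\) is spent as a rank-one correction in the first direction. Lemma~\ref{lem:compression} and the mixed G{\aa}rding inequality then finish.

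\textbf{Two secondary corrections.} First, the minimizer pairs \(e_i\) with \((I-M)^{-1}\) applied to the orthogonal projection of \(e_i\) onto \(T_X\Sigma_u\), not to \(Re_i\). Your stated first attempt with the rotation \(R\) leaves an extra \(-(1-c)^2(1+D_{11})d^{-2}\) in the \((1,1)\) entry, where \(c=\cos\theta=\langle N_u,N_v\rangle\). This term is uncontrolled near a focal point. Second, the contact only gives \(I-M\geq0\), not \(I-M>0\). You should therefore work with \(M((1+\varepsilon)I-M)^{-1}\) and let \(\varepsilon\to0\) at the end.

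\textbf{The genuine gap is the zeroth-order step.} Your claim \(\sigma_2(B)\geq\sigma_2(M)/d^2=\lambda_u\) is false for any rank-one weight \(t<1\). Indeed,
\(d^2\sigma_2(B)=c^2\sigma_2(D)+(1-c^2)\bigl[\sigma_2(D')+t\,\sigma_1(D')\bigr]\).
Take \(c=0\) and \(M=\operatorname{diag}(\mu_1,\epsilon,\dots,\epsilon)\) in the adapted frame. As \(\epsilon\to0\), the ratio \(d^2\sigma_2(B)/\sigma_2(M)\) tends to \(t/\mu_1\), which is \(<1\) once \(\mu_1>t\).

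What Lemma~\ref{lem:compression} actually gives is \(\sigma_2(B)\geq\frac{n-2+c^2}{n-1}\lambda_u\). So the G{\aa}rding term is only \(\frac2d\sqrt{\lambda_u\lambda_v}\,s\), with \(s=\sqrt{(n-2+c^2)/(n-1)}\). Suppose you now bound \(-2c\lambda_v/d\geq-2\lambda_v/d\) separately, as you propose (``using \(\cos\theta\le1\)''). You get \(\frac2d\bigl(\sqrt{\tfrac{n-2}{n-1}}\sqrt{\lambda_u\lambda_v}-\lambda_v\bigr)\). This is below \((\sqrt{\lambda_u\lambda_v}-\lambda_v)/d\) unless \(\sqrt{\lambda_u/\lambda_v}\,\bigl(2\sqrt{(n-2)/(n-1)}-1\bigr)\geq1\). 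It is even negative for \(n=3\) and \(\lambda_u<2\lambda_v\).

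\textbf{The missing idea} is to keep the two \(c\)-dependences coupled. The loss in \(\sigma_2(B)\) occurs exactly when \(c<1\), which is also when \(-2c\lambda_v\) improves. Use \(s\geq\max\{c,\sqrt{(n-2)/(n-1)}\}\), which holds since \(s^2-c^2=\frac{(n-2)(1-c^2)}{n-1}\), together with \(2\sqrt{(n-2)/(n-1)}>1\). With \(\ell=\sqrt{\lambda_u/\lambda_v}>1\) this gives
\[
2\sqrt{\lambda_u\lambda_v}\,s-2c\lambda_v-\bigl(\sqrt{\lambda_u\lambda_v}-\lambda_v\bigr)
=\lambda_v\bigl[\ell(2s-1)+1-2c\bigr]\geq2\lambda_v(s-c)\geq0 .
\]
This is why the statement carries \(\sqrt{\lambda_u\lambda_v}-\lambda_v\) rather than twice that quantity.
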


\begin{proof}
Choose local orthonormal tangent frames $\{X_a\}$ on $\Sigma_u$
and $\{Y_i\}$ on $\Sigma_v$ such that
\[
 \nabla^{\Sigma_u}_{X_b}X_a=0\quad\text{at }X,
 \qquad
 \nabla^{\Sigma_v}_{Y_j}Y_i=0\quad\text{at }Y.
\]
Their values at $X$ and $Y$ will be chosen below.
All quantities in this proof are evaluated at these points.
Thus
\[
 \langle X_a,X_b\rangle=\delta_{ab},\qquad
 \langle Y_i,Y_j\rangle=\delta_{ij},
\]
and
\[
 X_{ab}:=\overline\nabla_{X_b}X_a=-h^u_{ab}N_u,
 \qquad
 Y_{ij}:=\overline\nabla_{Y_j}Y_i=-h^v_{ij}N_v.
\]

Put
\[
 \Psi(X',Y')=\phi(Y')+\log|Y'-X'|.
\]
Subscripts on $\Psi$ denote covariant derivatives on
$\Sigma_u\times\Sigma_v$ with respect to these frames.
Differentiating in the directions $X_a$ gives
\[
 0=\Psi_a
 =-\frac{\langle Y-X,X_a\rangle}{d^2}.
\]
Thus $Y-X$ is normal to $\Sigma_u$. Since $X$ is a nearest point
of $Y$ on the boundary of $E_u$, the downward orientation gives
\[
 Y-X=-dN_u.
\]
In the rest of the proof, denote
\[
 c=\langle N_u,N_v\rangle,\qquad
 p_{ai}=\langle X_a,Y_i\rangle,\qquad
 \nu_i=\langle N_u,Y_i\rangle.
\]
Differentiating in the directions $Y_i$ gives
\[
 0=\Psi_i
 =\phi_i+\frac{\langle Y-X,Y_i\rangle}{d^2},
\]
and hence
\begin{equation}\label{eq:first-local}
 \phi_i=\frac{\nu_i}{d}.
\end{equation}

Taking covariant derivatives once more, we obtain
\[
 \Psi_{ab}=\frac{\delta_{ab}-dh^u_{ab}}{d^2},
 \qquad
 \Psi_{aj}=-\frac{p_{aj}}{d^2},
\]
and
\[
 \Psi_{ij}
 =
 \nabla_i\nabla_j\phi
 +\frac{c}{d}h^v_{ij}
 +\frac{\delta_{ij}-2\nu_i\nu_j}{d^2}.
\]
Since $\Psi$ has a local minimum at $(X,Y)$, its Hessian is
nonnegative. Therefore
\begin{equation}\label{eq:block-local}
 \begin{pmatrix}
 \displaystyle
 d^{-2}(\delta_{ab}-dh^u_{ab})
 &
 \displaystyle
 -d^{-2}p_{aj}
 \\[2mm]
 \displaystyle
 -d^{-2}p_{bi}
 &
 \displaystyle
 \nabla_i\nabla_j\phi
 +d^{-1}ch^v_{ij}
 +d^{-2}(\delta_{ij}-2\nu_i\nu_j)
 \end{pmatrix}
 \geq0.
\end{equation}

Since $N_u$ and $N_v$ are both downward graph normals, $c>-1$.
For $Z\in T_X\Sigma_u$, define
\[
 \mathcal RZ
 =
 Z-\frac{\langle N_v,Z\rangle}{1+c}(N_u+N_v).
\]
For $Z,W\in T_X\Sigma_u$, a direct calculation gives
\[
 \langle\mathcal RZ,N_v\rangle=0,
 \qquad
 \langle\mathcal RZ,\mathcal RW\rangle=\langle Z,W\rangle.
\]
Thus $\mathcal R:T_X\Sigma_u\to T_Y\Sigma_v$ is the canonical
isometry induced by the shortest ambient rotation carrying $N_u$
to $N_v$. If $|c|<1$, choose the frame values
\[
 X_1=\frac{N_v-cN_u}{\sqrt{1-c^2}},
 \qquad
 Y_1:=\mathcal RX_1
 =\frac{cN_v-N_u}{\sqrt{1-c^2}},
\]
and $X_\alpha=Y_\alpha$, $2\leq\alpha\leq n$, in the common
tangent subspace. If $c=1$, the two tangent spaces agree, and we
take the same orthonormal basis in both spaces. In either case,
\begin{equation}\label{eq:adapted-local}
\begin{gathered}
 p_{11}=c,\qquad
 p_{\alpha\beta}=\delta_{\alpha\beta},\qquad
 p_{1\alpha}=p_{\alpha1}=0,\\
 \nu_1=-\sqrt{1-c^2},\qquad
 \nu_\alpha=0
 \qquad(2\leq\alpha\leq n).
\end{gathered}
\end{equation}

Set
\[
 M_{ij}=dh^u_{ij}.
\]
The upper-left block of \eqref{eq:block-local} gives $I-M\geq0$,
while
\[
 M\in\Gamma_2,\qquad
 \sigma_2(M)=d^2\lambda_u.
\]
For $\varepsilon>0$, set
\[
 D_\varepsilon=M\bigl((1+\varepsilon)I-M\bigr)^{-1},
 \qquad
 A_\varepsilon=\frac{M}{1+\varepsilon}.
\]
Then
\[
 D_\varepsilon,A_\varepsilon\in\Gamma_2,\qquad
 \sigma_2(D_\varepsilon)
 \geq\frac{\sigma_2(M)}{(1+\varepsilon)^2}
 =\frac{d^2\lambda_u}{(1+\varepsilon)^2},
\]
and
\[
 P_\varepsilon:=D_\varepsilon-A_\varepsilon
 =
 \frac1{1+\varepsilon}M^2
 \bigl((1+\varepsilon)I-M\bigr)^{-1}\geq0.
\]

Replacing the upper-left block of \eqref{eq:block-local} by
$d^{-2}((1+\varepsilon)I-M)$ preserves nonnegativity.
Since $(1+\varepsilon)I-M>0$, we may minimize the resulting
quadratic form over its $T_X\Sigma_u$ component. Using
\[
 \bigl((1+\varepsilon)I-M\bigr)^{-1}
 =
 \frac1{1+\varepsilon}(I+D_\varepsilon),
\]
we obtain
\begin{equation}\label{eq:regularized-schur}
\begin{aligned}
 \nabla_i\nabla_j\phi
 \geq{}&
 -\frac{c}{d}h^v_{ij}
 +\frac{\nu_i\nu_j}{d^2}
 -\frac{\varepsilon}{(1+\varepsilon)d^2}\delta_{ij}\\
 &+\frac1{(1+\varepsilon)d^2}
 p_{ai}(D_\varepsilon)_{ab}p_{bj}.
\end{aligned}
\end{equation}

Define $B$ by
\[
\begin{aligned}
 dB_{11}
 &=
 \frac1{1+\varepsilon}
 \left[
 c^2(D_\varepsilon)_{11}
 +\frac{n-2}{n-1}(1-c^2)
 \right],\\
 dB_{1\alpha}
 &=\frac{c}{1+\varepsilon}(D_\varepsilon)_{1\alpha},\\
 dB_{\alpha\beta}
 &=\frac1{1+\varepsilon}(D_\varepsilon)_{\alpha\beta},
 \qquad 2\leq\alpha,\beta\leq n.
\end{aligned}
\]
Expanding $\sigma_2(B)$ and applying Lemma~\ref{lem:compression}
to $M/(1+\varepsilon)$, whose associated matrix is $D_\varepsilon$,
gives
\begin{align*}
 d^2(1+\varepsilon)^2\sigma_2(B)
 ={}&
 c^2\sigma_2(D_\varepsilon)\\
 &+(1-c^2)
 \left[
 \sum_{2\leq\alpha<\beta\leq n}
 \left(
 (D_\varepsilon)_{\alpha\alpha}
 (D_\varepsilon)_{\beta\beta}
 -(D_\varepsilon)_{\alpha\beta}^2
 \right)
 +\frac{n-2}{n-1}
 \sum_{\alpha=2}^n(D_\varepsilon)_{\alpha\alpha}
 \right]\\
 \geq{}&
 \left[
 c^2+\frac{n-2}{n-1}(1-c^2)
 \right]
 \frac{d^2\lambda_u}{(1+\varepsilon)^2}.
\end{align*}
Consequently,
\[
 \sigma_2(B)
 \geq
 \frac{n-2+c^2}{(n-1)(1+\varepsilon)^4}\lambda_u.
\]
Moreover,
\[
\begin{aligned}
 d\operatorname{tr}B
 =
 \frac1{1+\varepsilon}
 \left\{
 c^2\operatorname{tr}D_\varepsilon
 +(1-c^2)
 \left[
 \frac{n-2}{n-1}
 +\sum_{\alpha=2}^n(D_\varepsilon)_{\alpha\alpha}
 \right]
 \right\}.
\end{aligned}
\]
Since $D_\varepsilon\in\Gamma_2$,
\[
 \operatorname{tr}D_\varepsilon>0,\qquad
 \sum_{\alpha=2}^n(D_\varepsilon)_{\alpha\alpha}
 =T_1(D_\varepsilon)(e_1,e_1)>0.
\]
It follows that $\operatorname{tr}B>0$. Together with
$\sigma_2(B)>0$, this proves $B\in\Gamma_2$.

By \eqref{eq:first-local} and \eqref{eq:adapted-local},
\[
 T_v^{ij}\phi_i\phi_j
 =
 \frac{1-c^2}{d^2}T_v^{11}.
\]
Contracting \eqref{eq:regularized-schur} with $T_v$ and using the
definition of $B$ yields
\begin{align*}
 \Delta_{T_v}\phi
 \geq{}&
 \left(
 1-\frac{n-2}{(n-1)(1+\varepsilon)}
 \right)T_v^{ij}\phi_i\phi_j
 +\frac1dT_v^{ij}B_{ij}
 -\frac{2c\lambda_v}{d}\\
 &-\frac{\varepsilon}{(1+\varepsilon)d^2}
 \operatorname{tr}T_v.
\end{align*}
The coefficient of the gradient term is at least $1/(n-1)$.
The G{\aa}rding inequality implies
\[
 T_v^{ij}B_{ij}
 \geq
 \frac{2}{(1+\varepsilon)^2}
 \sqrt{\lambda_u\lambda_v}
 \sqrt{\frac{n-2+c^2}{n-1}}.
\]
Letting $\varepsilon\downarrow0$, we obtain
\begin{equation}\label{eq:first-estimate-local}
 \Delta_{T_v}\phi
 \geq
 \frac1{n-1}T_v^{ij}\phi_i\phi_j
 +\frac2d
 \left[
 \sqrt{\lambda_u\lambda_v}
 \sqrt{\frac{n-2+c^2}{n-1}}
 -c\lambda_v
 \right].
\end{equation}

It remains to estimate the zero-order term. Since
\[
 \sqrt{\frac{n-2+c^2}{n-1}}
 \geq
 \sqrt{\frac{n-2}{n-1}}>\frac12
\]
and
\[
 \frac{n-2+c^2}{n-1}-c^2
 =
 \frac{(n-2)(1-c^2)}{n-1}\geq0,
\]
we have
\[
 \sqrt{\frac{n-2+c^2}{n-1}}\geq |c|\geq c.
\]
Using $\lambda_u>\lambda_v$, we obtain
\begin{align*}
 &\frac2d
 \left[
 \sqrt{\lambda_u\lambda_v}
 \sqrt{\frac{n-2+c^2}{n-1}}
 -c\lambda_v
 \right]
 -\frac{\sqrt{\lambda_u\lambda_v}-\lambda_v}{d}\\
 &\quad=
 \frac{\lambda_v}{d}
 \left[
 \sqrt{\frac{\lambda_u}{\lambda_v}}
 \left(
 2\sqrt{\frac{n-2+c^2}{n-1}}-1
 \right)
 +1-2c
 \right]\\
 &\quad\geq
 \frac{\lambda_v}{d}
 \left[
 2\sqrt{\frac{n-2+c^2}{n-1}}-1+1-2c
 \right]\\
 &\quad=
 \frac{2\lambda_v}{d}
 \left[
 \sqrt{\frac{n-2+c^2}{n-1}}-c
 \right]\geq0.
\end{align*}
Consequently, \eqref{eq:first-estimate-local} gives the stronger
inequality
\[
 \Delta_{T_v}\phi
 \geq
 \frac1{n-1}T_v^{ij}\phi_i\phi_j
 +\frac{\sqrt{\lambda_u\lambda_v}-\lambda_v}{d}.
\]
Since $T_v$ is positive definite, this implies
\eqref{eq:strong-two-surface-jacobi}.

If $Y$ has several nearest points on the lower hypersurface,
the same argument applies to each contact pair separately.
\end{proof}

\begin{remark}[Role of the compression inequality]
The matrix
$\bigl(p_{ai}(D_\varepsilon)_{ab}p_{bj}\bigr)$
in \eqref{eq:regularized-schur} need not belong to $\Gamma_2$.
For example, when $n=3$, take
\[
 M=\operatorname{diag}\left(\frac12,\frac14,-\frac1{10}\right).
\]
Then $M\in\Gamma_2$, $I-M>0$, and
\[
 \sigma_2(M)=\frac1{20},\qquad
 D_\varepsilon
 =\operatorname{diag}\left(
 \frac1{1+2\varepsilon},
 \frac1{3+4\varepsilon},
 -\frac1{11+10\varepsilon}
 \right).
\]
Although $D_\varepsilon\in\Gamma_2$, when $c=0$,
\eqref{eq:adapted-local} gives
\[
 \sigma_2\!\left(
 \bigl(p_{ai}(D_\varepsilon)_{ab}p_{bj}\bigr)
 \right)
 =-\frac1{(3+4\varepsilon)(11+10\varepsilon)}<0.
\]
Thus identifying the tangent spaces by $\mathcal R$ does not
by itself justify applying the mixed G{\aa}rding inequality
to this matrix.

The term $\nu_i\nu_j$ in \eqref{eq:regularized-schur} is supported
in the first direction, where the projection causes this loss.
The definition of $B$ can be written as
\[
 dB_{ij}
 =\frac1{1+\varepsilon}
 \left[
 p_{ai}(D_\varepsilon)_{ab}p_{bj}
 +\frac{n-2}{n-1}\nu_i\nu_j
 \right].
\]
Lemma~\ref{lem:compression} supplies the lower bound
\[
 \sigma_2(B)
 \ge
 \frac{n-2+c^2}{(n-1)(1+\varepsilon)^4}\lambda_u.
\]
Together with $\operatorname{tr}B>0$, this gives $B\in\Gamma_2$
and permits the mixed G{\aa}rding comparison.
At the same time, the coefficient of
$T_v^{ij}\phi_i\phi_j$ left in the estimate is
\[
 1-\frac{n-2}{(n-1)(1+\varepsilon)}
 \ge\frac1{n-1}.
\]
The compression inequality therefore compensates for the
mismatch of the normals while preserving a positive gradient
term in the Jacobi inequality.
\end{remark}

\subsection{A positive lower bound at some point}\label{Subsec:A positive lower bound at some point}
To establish the separation estimate throughout \(B_{1/2}\), we first
need a quantitative starting point: \(d\) must be bounded below
somewhere in a fixed interior ball. The next lemma obtains this from
the two-surface inequality.
\begin{lemma}\label{lem:seed}
There is \(C=C(n,K,\lambda_u,\lambda_v)\) such that
\[
 \max_{\overline{B}_{1/8}}d\geq C^{-1}.
\]
\end{lemma}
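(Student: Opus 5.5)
We argue by contradiction, using a smooth torsion-type barrier on the small ball \(B_{1/8}\) together with the viscosity form of Proposition~\ref{prop:distance-jacobi}. Write \(\delta:=\max_{\overline B_{1/8}}d\) and suppose \(\delta\) is very small. The inequality \eqref{eq:strong-two-surface-jacobi} says, roughly, that \(w:=-\log d\) satisfies
\[
\Delta_{T_v}w\geq \frac{\mu}{d}=\mu e^{w},
\qquad \mu:=\sqrt{\lambda_u\lambda_v}-\lambda_v>0,
\]
in the upper-support sense. Thus \(d\) behaves like a supersolution of a torsion problem, with a source term \(\mu/d\) that becomes huge when \(d\) is small. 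The plan is to contradict this: a function that is tiny everywhere on \(B_{1/8}\) cannot carry such a large source.

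To make this precise, I would compare \(\log d\) with a smooth function of the form
\[
\psi(y)=\log\delta+a\bigl(1-64|y-y_0|^2\bigr)\quad\text{or}\quad \psi(y)=\log\delta + \log\bigl(\rho^2-|y-y_0|^2\bigr)-\log\rho^2,
\]
centred at some \(y_0\) and chosen so that \(\psi\geq\log d\) on \(\partial B_{1/8}\) (using \(d\le\delta\)). We also want \(\psi\) to touch \(\log d\) from above at an interior point after a vertical translation. I would slide \(\psi\) down until first contact at some \(y_1\in B_{1/8}\). Then \(\phi:=-\psi\) satisfies \(\phi+\log d\ge\) const with equality at \(y_1\). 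Taking a closest point \(X\) on \(\Sigma_u\) (interior, since \(d\) is small and \(X\) lies within distance \(d\) of \(X_v(y_1)\)), we get that \((X',Y')\mapsto\phi(Y')+\log|Y'-X'|\) has a local minimum at \((X,Y)\). Hence Proposition~\ref{prop:distance-jacobi} applies:
\[
\Delta_{T_v}\phi\ge\frac{1}{2(n-1)}|\nabla\phi|^2_{T_v}+\frac{\mu}{d(y_1)}.
\]
On the other hand, \(\phi\) is an explicit function of \(y\) with \(D^2\phi\) and \(D\phi\) bounded by \(C(a)\). Since \(|Dv|\le K\) and \(T_v\) is positive definite, I expect \(\Delta_{T_v}\phi\le C(a,K)\,\mathrm{tr}\,T_v\). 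The positive gradient term and the choice of a paraboloid-type \(\phi\), whose Hessian in \(y\) is \(+\)definite times \(a\), should absorb the Christoffel correction \(-\Gamma^k_{ij}\phi_k\), which contains \(h_v\).

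This Christoffel term is the main obstacle. \(\mathrm{tr}\,T_v=(n-1)H_v\) is unbounded, and the Christoffel term \(T_v^{ij}h^v_{ij}\,\langle\nabla v,\nabla\phi\rangle/W\) equals \(2\lambda_v\times\)bounded, so that piece is fine. The remaining \(H_v\) term must be beaten without an a priori curvature bound. To handle it, I would choose \(\psi\) to be a concave function of \(|y-y_0|^2\), for instance \(\psi=\log\delta-\beta|y-y_0|^2\) plus a constant, so that the Hessian of \(\phi\) in graph coordinates is \(2\beta\, g\)-comparable and positive. The inequality then fails only where \(\beta H_v\) dominates. At such points, \(\Delta_{T_v}\phi\ge\) something, which is the wrong direction, so I would instead place the positive-Hessian structure on \(\log d\) itself by using \(|\nabla\phi|^2_{T_v}\ge c\,\beta^2|y-y_0|^2H_v\) to dominate \(C\beta H_v\) away from \(y_0\). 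Near \(y_0\), the same dominance should follow from \(\mu/d(y_1)\ge\mu/\delta\). The rough hope is the chain
\[
\mu/\delta\le C(n,K)\beta(1+H_v)-c\beta^2|y_1-y_0|^2H_v,
\]
which can fail only if \(\delta\ge C^{-1}\). If this direct route breaks, I would fall back on an integration-by-parts version: integrate \(\mathrm{div}(T_v\nabla\cdot)\) against a cutoff using \(\nabla_iT_v^{ij}=0\). Then \(\int \eta\,\mu/d\) is bounded by \(\int |\nabla\eta|_{T_v}|\nabla\log d|_{T_v}\), which is controlled via the gradient term and \(\int H_v\eta\le C\) (obtained from the divergence structure of \(H_v\) with \(|Dv|\le K\)). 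This gives \(\int_{B_{1/8}}d^{-1}\le C\) and hence \(\delta\ge C^{-1}\).
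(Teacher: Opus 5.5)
Your primary route, the pointwise barrier, has a genuine gap, and the obstacle you flag is fatal rather than technical.

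There is first a smaller sign problem. Proposition~\ref{prop:distance-jacobi} needs \(\phi\) to touch \(-\log d\) from above, i.e.\ \(\psi=-\phi\) must touch \(\log d\) from \emph{below}. Sliding a \(\psi\geq\log d\) downward gives contact from the wrong side. You would instead need \(\psi\to-\infty\) on \(\partial B_{1/8}\) and slide it upward.

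The real problem is the curvature term. For any explicit \(\phi(y)\) whose Hessian has size \(\beta\), \(\Delta_{T_v}\phi\) contains \(2\beta T_v^{ij}\delta_{ij}\). Up to factors depending on \(K\), this is comparable to \(\beta(n-1)H_v\), and nothing bounds \(H_v\) on \(B_{1/8}\). Your proposed absorption \(|\nabla\phi|_{T_v}^2\geq c\beta^2|z|^2H_v\) is false. The eigenvalues of \(T_v\) are \(H_v-\kappa_i\), and \(H_v-\kappa_1\) can be of order \(\lambda_v/H_v\). For example, take \(\kappa\approx(N,\lambda_v/((n-1)N),\ldots,\lambda_v/((n-1)N))\). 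If \(\nabla\phi\) points along the top principal direction, the gradient term is then only of order \(\beta^2|z|^2/H_v\). Likewise, \(\mu/\delta\) is a fixed number while \(\beta H_v\) is not. So the contact inequality from the Proposition is simply satisfied, near \(y_0\) or anywhere else, and no contradiction arises.

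The missing idea is to remove \(H_v\) from the pointwise step altogether. The paper does this with the exact torsion function \(-\Delta_{T_v}q=1\) on \(X_v(B_{1/8})\). It compares \(d^a\) with \(\theta q\) at a single touching point, with \(a=1/(4(n-1))\) so that the gradient term absorbs the \(a^2\)-term. It then bounds \(\max q\) from below via \(\int\eta\,d\mu=-\int q\,\Delta_{T_v}\eta\,d\mu\), where only \(\int H_v\,d\mu\leq C(n,K)\) enters.

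Your integration-by-parts fallback, by contrast, is a valid and genuinely different proof. As written, however, it omits the step that makes it legitimate. The Proposition is a pointwise upper-support statement, while integrating requires
\[
\Delta_{T_v}(-\log d)\geq\frac1{2(n-1)}|\nabla\log d|_{T_v}^2+\frac{\mu}{d}
\]
in the sense of distributions. This inequality does hold, for the following reasons.
\begin{itemize}
\item Away from \(E_u\), \(d\) is locally semiconcave: it is the distance to a closed set composed with the smooth map \(X_v\). Hence \(w=-\log d\) is semiconvex.
\item At each Alexandrov point \(y_1\), the second-order Taylor polynomial of \(w\) plus \(\varepsilon|y-y_1|^2\) is an admissible \(C^2\) upper support. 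Nearest points are interior once you dispose of the case \(\delta\geq7/8\).
\item The singular part of \(D^2w\) is nonnegative, so it pairs nonnegatively with \(T_v>0\).
\end{itemize}
Test with \(\zeta^2\), use \(\nabla_iT_v^{ij}=0\), and absorb the cross term into the gradient term by Young's inequality. This gives
\[
\mu\int\frac{\zeta^2}{d}\,d\mu\leq2(n-1)\int|\nabla\zeta|_{T_v}^2\,d\mu\leq2(n-1)^2\|D\zeta\|_\infty^2\int_{X_v(B_{1/8})}H_v\,d\mu\leq C(n,K),
\]
and hence \(\delta\geq c(n,K)\mu\).

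Comparing the two arguments: the paper invokes the Proposition exactly as stated, at one touching point, and integrates only the smooth function \(q\). It therefore needs no regularity theory for \(d\), at the cost of an auxiliary Dirichlet problem and the power \(d^a\). Your route is shorter and yields an integral bound on \(1/d\). It rests on the semiconcavity of \(d\) and Alexandrov's theorem, which you must state explicitly.
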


\begin{proof}
On \(X_v(B_{1/8})\), solve
\[
 \begin{cases}
 -\Delta_{T_v}q=1&\text{in }X_v(B_{1/8}),\\
 q=0&\text{on }X_v(\partial B_{1/8}).
\end{cases}
\]
For the fixed smooth function \(v\), the tensor \(T_v\) is positive
definite and hence uniformly elliptic on this compact subdomain.
Thus the problem has a classical solution, and \(q>0\) in the interior.
We first obtain a lower bound for \(\max q\) without using the
ellipticity ratio of \(T_v\).

The Newton identities are
\[
 \nabla_iT_v^{ij}=0,\qquad
 \operatorname{tr}T_v=(n-1)H_v,\qquad
 T_v^{ij}(h_v)_{ij}=2\lambda_v.
\]
Choose \(0\leq\eta\in C_c^\infty(B_{1/8})\), with
\(\eta=1\) on \(B_{1/16}\), and lift it to the graph.  Integration by
parts gives
\[
 \int_{X_v(B_{1/8})}\eta\,d\mu
 =
 -\int_{X_v(B_{1/8})}q\,\Delta_{T_v}\eta\,d\mu,
\]
and consequently
\[
 |B_{1/16}|
 \leq \int_{X_v(B_{1/8})}\eta\,d\mu
 \leq (\max q)
       \int_{X_v(B_{1/8})}|\Delta_{T_v}\eta|\,d\mu.
\]
Moreover,
\[
 |\Delta_{T_v}\eta|
 \leq (n-1)H_v|D^2\eta|+2\lambda_v|D\eta|.
\]
Since \(d\mu=W_v\,dx\), \(W_v\leq\sqrt{1+K^2}\), and
\(H_v=\operatorname{div}(Dv/W_v)>0\),
\[
 \begin{aligned}
 \int_{X_v(B_{1/8})}H_v\,d\mu
 &\leq
 \lVert W_v\rVert_{L^\infty}
 \int_{B_{1/8}}H_v\,dx\\
 &=
 \lVert W_v\rVert_{L^\infty}
 \int_{\partial B_{1/8}}
 \frac{Dv}{W_v}\cdot\nu_{\partial B_{1/8}}\,dS\\
 &\leq
 \sqrt{1+K^2}\,|\partial B_{1/8}|.
 \end{aligned}
\]
Also,
\[
 \operatorname{area}\bigl(X_v(B_{1/8})\bigr)
 \leq
 \sqrt{1+K^2}\,|B_{1/8}|.
\]
Consequently,
\begin{equation}\label{eq:maxq}
 \max q\geq C(n,K,\lambda_v)^{-1}.
\end{equation}

Set
\[
 a=\frac1{4(n-1)},\qquad
 \theta=
 \frac a2\bigl(\sqrt{\lambda_u\lambda_v}-\lambda_v\bigr)
 \left(\max_{\overline B_{1/8}}d\right)^{a-1}.
\]
If a nearest point to some \(X_v(y)\), \(y\in B_{1/8}\), lies on the
lateral boundary of \(E_u\), then \(d(y)\geq7/8\), and the conclusion
follows.  We may therefore assume
\[
 \max_{\overline B_{1/8}}d<\frac78.
\]
Every nearest point used below then lies in the interior of
\(\Sigma_u\).

We \textbf{claim} that
\begin{equation}\label{eq:comparison-dq}
 d^a\geq\theta q
 \qquad\text{in }B_{1/8}.
\end{equation}
Otherwise \(d^a-\theta q\) has a negative interior minimum at \(y_*\).
Indeed, \(q=0\) on the boundary, so a negative minimum cannot occur
there.  Its negativity also implies
\[
 d(y_*)^a-\theta q(y_*)<0.
\]
The function
\[
 \psi(y)=\theta q(y)+d(y_*)^a-\theta q(y_*)
\]
then satisfies
\[
 \psi\leq d^a,\qquad \psi(y_*)=d(y_*)^a>0.
\]
Thus \(\psi>0\) near \(y_*\), which is precisely where the logarithm
below is used.
Choose a nearest point \(X_u(x_*)\) to \(X_v(y_*)\). It follows that
\[
 (x,y)\longmapsto
 -\frac1a\log\psi(y)+\log|X_v(y)-X_u(x)|
\]
has a local minimum at \((x_*,y_*)\). Applying Proposition~\ref{prop:distance-jacobi} at
\((x_*,y_*)\) with \(\phi=-a^{-1}\log\psi\) gives
\begin{align*}
 \Delta_{T_v}\psi
 &=
 -a\psi\,\Delta_{T_v}\phi
 +a^2\psi T_v^{ij}\phi_i\phi_j\\
 &\leq
 -a\bigl(\sqrt{\lambda_u\lambda_v}-\lambda_v\bigr)
 \frac{\psi}{d}
 +a\left(a-\frac1{2(n-1)}\right)
 \psi T_v^{ij}\phi_i\phi_j\\
 &\leq
 -a\bigl(\sqrt{\lambda_u\lambda_v}-\lambda_v\bigr)d^{a-1}\\
 &\leq
 -a\bigl(\sqrt{\lambda_u\lambda_v}-\lambda_v\bigr)
 \left(\max_{\overline B_{1/8}}d\right)^{a-1}
 =-2\theta.
\end{align*}
On the other hand, \(\Delta_{T_v}\psi=-\theta\), a contradiction.  This
proves \eqref{eq:comparison-dq}.

Evaluating \eqref{eq:comparison-dq} at a maximum point of \(q\) and recalling
the definition of \(\theta\), we obtain
\[
 \left(\max_{\overline B_{1/8}}d\right)^a
 \geq
 \frac a2\bigl(\sqrt{\lambda_u\lambda_v}-\lambda_v\bigr)
 \left(\max_{\overline B_{1/8}}d\right)^{a-1}\max q.
\]
After enlarging \(C\), this gives
\[
 \max_{\overline B_{1/8}}d
 \geq C(n,K,\lambda_u,\lambda_v)^{-1}.
\]
\end{proof}

Set
\[
 \mathcal G(p)=\sqrt{1+|p|^2}-1,\qquad
 \mathcal G(Dv)=\frac1{\langle N_v,E_{n+1}\rangle}-1.
\]
Fix $y_0\in\overline B_{1/8}$. For $y\in B_{2/3}(y_0)$, set
\[
 z=y-y_0,\qquad R_i=\langle(z,0),e_i\rangle,\qquad
 Z_i=\frac{1}{\langle N_v,E_{n+1}\rangle}\left[
 R_i+\frac{\langle e_i,E_{n+1}\rangle}{\langle N_v,E_{n+1}\rangle}
       \bigl(|z|-\langle(z,0),N_v\rangle\bigr)\right].
\]
We now introduce the following important auxiliary function
$$
\Phi=-\frac{\langle X_v(y)-X_v(y_0),N_v\rangle}
                 {\langle N_v,E_{n+1}\rangle}+|z|\mathcal G(Dv).
$$
Then $\Phi(y_0)=0$, and $|\Phi|\le C(K)|z|$. By direct differentiation, for \(y\neq y_0\), we have
\begin{equation}\label{eq:phi-gradient}
 \Phi_i=\frac{\mathcal G(Dv)}{|z|}R_i-\sum_k h_{ik}Z_k.
\end{equation}
The estimates needed below are collected in the following lemma.
\begin{lemma}\label{lem:phi}
In the principal frame, where $h_v(e_i,e_j)=\kappa_i\delta_{ij}$,
\begin{equation}\label{eq:phi-bound}
 \Delta_{T_v}\Phi
 \ge\frac{|z|}{8(1+K^2)}\sum_i(H_v-\kappa_i)\kappa_i^2
 -\frac{C(n,K,\lambda_v)H_v}{|z|}.
\end{equation}
Moreover,
\begin{equation}\label{eq:R-lower-bound}
 \frac{|z|^2}{1+K^2}\le\sum_iR_i^2\le|z|^2,
\end{equation}
and
\begin{equation}\label{eq:Z-identities}
 \sum_iR_iZ_i\ge\frac{|z|^2}{\sqrt{1+K^2}+K},\qquad
 \sum_iZ_i^2\le4(1+K^2)^2|z|^2.
\end{equation}    
\end{lemma}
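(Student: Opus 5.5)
The plan is to handle the three algebraic claims \eqref{eq:R-lower-bound} and \eqref{eq:Z-identities} first, and then the second-order bound \eqref{eq:phi-bound}. For \eqref{eq:R-lower-bound}, note that $R_i=\langle (z,0),e_i\rangle$, where $\{e_i\}$ is an orthonormal frame of $T\Sigma_v$. Hence $\sum_iR_i^2$ is the squared length of the tangential projection of $(z,0)$, namely $|z|^2-\langle (z,0),N_v\rangle^2$. The upper bound is immediate. For the lower bound we use
\[
\langle (z,0),N_v\rangle^2=\frac{(z\cdot Dv)^2}{W_v^2}\le \frac{K^2}{1+K^2}|z|^2 ,
\]
since $|Dv|\le K$ and $t^2/(1+t^2)$ is increasing in $t$. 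For \eqref{eq:Z-identities}, write $\omega=\langle N_v,E_{n+1}\rangle=1/W_v\in[(1+K^2)^{-1/2},1]$ and $\tau_i=\langle e_i,E_{n+1}\rangle$, so that $\sum\tau_i^2=1-\omega^2$ and $\sum R_i\tau_i=-\langle (z,0),N_v\rangle\omega$ (this uses $\langle (z,0),E_{n+1}\rangle=0$). Then
\[
\sum R_iZ_i=\frac1\omega\Bigl[|z|^2-s^2-s(|z|-s)\Bigr]=\frac{|z|(|z|-s)}{\omega},\qquad s=\langle(z,0),N_v\rangle .
\]
Since $|s|\le |z|K/\sqrt{1+K^2}$, this quantity is at least $|z|^2\sqrt{1+K^2}\,(1-K/\sqrt{1+K^2})=|z|^2/(\sqrt{1+K^2}+K)$. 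The bound on $\sum Z_i^2$ follows from the triangle inequality: $|Z|\le \omega^{-1}\bigl(|z|+\omega^{-1}\cdot 2|z|\bigr)\le 3(1+K^2)|z|$, which is within the stated constant.

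For \eqref{eq:phi-bound}, I would take \eqref{eq:phi-gradient} as given and differentiate once more in a normal frame at a point of $\Sigma_v$ diagonalizing $h_v$. The second derivative produces three groups of terms. \emph{(i)} The derivative of $-h_{ik}Z_k$ gives $-h_{ikj}Z_k$; after contracting with $T_v$ and applying Codazzi, this becomes $-Z_k\nabla_k\sigma_2=0$, because $\sigma_2=\lambda_v$ is constant. \emph{(ii)} Also from $-h_{ik}Z_k$ comes $-h_{ik}\nabla_jZ_k$. Here $\nabla_jZ_k$ contains $\delta_{jk}/\omega$ plus terms involving $h$ coming from differentiating $N_v$ and $e_k$, and plus $O(1/|z|)$ terms from differentiating $|z|$. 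Contracting the $\delta$-part gives $-\omega^{-1}T^{ij}h_{ij}=-2\lambda_v/\omega$, which is harmless. \emph{(iii)} Differentiating $\mathcal G(Dv)R_i/|z|$ produces $\nabla_j\mathcal G\,R_i/|z|$, together with $\mathcal G\,\nabla_j(R_i/|z|)=O(\mathcal G/|z|)$ and a term $-\mathcal G R_k h_{ij}\nu$-type contribution. Since $\nabla_j\mathcal G=-\omega^{-2}\nabla_j\omega=-\omega^{-2}h_{jl}\langle e_l,E_{n+1}\rangle$ and the derivatives of $\nabla\mathcal G$ are what matter, the key piece will be the second derivative of $|z|\mathcal G(Dv)$. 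That piece contains
\[
|z|\,\omega^{-2}\sum_i T^{ii}\kappa_i^2\,\omega\;+\;(\text{positive }|\nabla\omega|^2\text{ term}) ,
\]
obtained using $\nabla^2_{ij}\omega=-h_{ikj}\tau_k$-terms (which again vanish by Codazzi after contraction) plus $h_{ik}h_{kj}\omega$. With $T^{ii}=H_v-\kappa_i$ this gives the favorable quantity $|z|\omega^{-1}\sum_i(H_v-\kappa_i)\kappa_i^2$, with coefficient at least $|z|$.

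The main obstacle is group \emph{(ii)} and the cross terms, which are quadratic in $\kappa$ and carry only the weight $|z|$ or $1$. Terms of the form $T^{ii}\kappa_i^2|z|$ coming from $-h_{ik}\nabla_jZ_k$, through the derivative of $N_v$ inside $Z$, could eat the good term, so I would compute their exact coefficient and hope the algebraic structure leaves at least $1/(8(1+K^2))$ of it. The alternative is to absorb the bad terms via Cauchy--Schwarz, $|z|\,|T^{ii}\kappa_i a_i|\le \tfrac{|z|}{2\cdot 8(1+K^2)}T^{ii}\kappa_i^2+C|z|T^{ii}a_i^2$, with $a_i$ bounded. All remaining terms are linear in $h$ or of zero order, each bounded by $C\,\operatorname{tr}T/|z|=C(n-1)H_v/|z|$, using $|z|\le 2$ and the identity $T^{ij}h_{ij}=2\lambda_v$. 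The term $2\lambda_v/\omega$ is also bounded by $C H_v/|z|$, since $H_v^2\ge 2\sigma_2$ gives $H_v\ge\sqrt{2\lambda_v}$. This yields \eqref{eq:phi-bound}.
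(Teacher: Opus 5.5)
Your proof of \eqref{eq:R-lower-bound} is fine. For \eqref{eq:phi-bound} you follow the paper's route: differentiate \eqref{eq:phi-gradient}, contract with $T_v$, remove the third-order term by Codazzi, and absorb the linear terms by Young. However, you stop exactly at the step that carries the lemma.

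Write $\omega=\langle N_v,E_{n+1}\rangle$, $\tau_i=\langle e_i,E_{n+1}\rangle$, $t_i=\tau_i/\omega$ and $s=\langle(z,0),N_v\rangle$. After contraction, the terms quadratic in $\kappa$ are exactly
\[
 \sum_i(H_v-\kappa_i)\kappa_i^2\,\frac1\omega\Bigl\{|z|\bigl(1+2t_i^2\bigr)+2t_i\bigl(R_i-t_is\bigr)\Bigr\}.
\]
The first part is $|z|\,\Delta_{T_v}(1/\omega)$. The second part comes from the support-function piece $-\langle X_v(y)-X_v(y_0),N_v\rangle/\omega$ of $\Phi$; it is the gradient-dependent term and has no sign.

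This second part has the same form $|z|\,T_v^{ii}\kappa_i^2$ as the good term, so your Cauchy--Schwarz fallback cannot absorb it. Writing it as $|z|T_v^{ii}\kappa_ia_i$ forces $a_i\sim\kappa_i$, which is unbounded. Nor is it dominated by the part $|z|/\omega\ge|z|$ of the good coefficient that you single out. The crude estimate $|R_i-t_is|\le|z|(1+|t_i|)$ only gives the brace $\ge|z|(1-2|t_i|)$. This is negative once $|t_i|>1/2$, which is exactly the large-gradient regime the lemma is meant for. So ``hoping the algebraic structure leaves at least $1/(8(1+K^2))$'' is the whole content of \eqref{eq:phi-bound}, and your proposal does not supply it.

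The missing observation is that $e_i-t_iN_v$ is horizontal, with $|e_i-t_iN_v|^2=1+t_i^2$, and that $R_i-t_is=\langle(z,0),e_i-t_iN_v\rangle$. Hence $|R_i-t_is|\le|z|\sqrt{1+t_i^2}$. Using $1/\omega\ge1$ and $|t_i|\le|Dv|\le K$, this gives
\[
 \frac1\omega\Bigl\{|z|\bigl(1+2t_i^2\bigr)+2t_i\bigl(R_i-t_is\bigr)\Bigr\}
 \ge|z|\Bigl(\sqrt{1+t_i^2}-|t_i|\Bigr)^2
 =\frac{|z|}{\bigl(\sqrt{1+t_i^2}+|t_i|\bigr)^2}
 \ge\frac{|z|}{4(1+K^2)}.
\]
This step uses the $2t_i^2$ part (your $|\nabla\omega|^2$ term) in an essential way, which is why the correction $|z|\mathcal G(Dv)$ is built into $\Phi$. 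Only after this does Young's inequality on the linear cross term $-\frac{2}{|z|\omega^2}\sum_iT_v^{ii}\kappa_i\tau_iR_i$ cost half of the good term and leave $1/(8(1+K^2))$. Separately, the signs in your formula for $\nabla^2\omega$ are reversed; the correct identity is $\nabla^2_{ij}\omega=\tau_kh_{ijk}-\omega h_{ik}h_{kj}$.

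There are also two smaller slips in \eqref{eq:Z-identities}.
\begin{itemize}
\item Your lower bound for $\sum_iR_iZ_i$ replaces $1/\omega=W_v$ by $\sqrt{1+K^2}$, which goes the wrong way. Keep the same $|Dv|$ in both factors: $|z|(|z|-s)/\omega=|z|^2W_v-|z|\,z\cdot Dv\ge|z|^2(W_v-|Dv|)=|z|^2/(W_v+|Dv|)$.
\item The triangle inequality gives only $\sum_iZ_i^2\le9(1+K^2)^2|z|^2$, not the stated constant $4$. The identities you already used give exactly $\sum_iZ_i^2=(|z|-s)^2/\omega^4\le4(1+K^2)^2|z|^2$.
\end{itemize}
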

\begin{proof}
Since the vector $e_k-\frac{\langle e_k,E_{n+1}\rangle}{\langle N_v,E_{n+1}\rangle}N_v$
is orthogonal to $E_{n+1}$, we have
\[
\begin{aligned}
 &\langle X_v(y)-X_v(y_0),e_k\rangle
 -\frac{\langle e_k,E_{n+1}\rangle}{\langle N_v,E_{n+1}\rangle}
       \langle X_v(y)-X_v(y_0),N_v\rangle\\
 &\qquad=R_k-\frac{\langle e_k,E_{n+1}\rangle}
                        {\langle N_v,E_{n+1}\rangle}
                   \langle(z,0),N_v\rangle.
\end{aligned}
\]
Direct computation yields
\[
\begin{aligned}
 &\langle N_v,E_{n+1}\rangle_j
 =\sum_k h_{jk}\langle e_k,E_{n+1}\rangle,\\
 &\langle e_k,E_{n+1}\rangle_j
 =-h_{kj}\langle N_v,E_{n+1}\rangle,\quad  (|z|)_j=R_j/|z|,\\
 &(R_i)_j=\delta_{ij}
 -\langle e_i,E_{n+1}\rangle\langle e_j,E_{n+1}\rangle
 -h_{ij}\langle(z,0),N_v\rangle,\\
 &\langle(z,0),N_v\rangle_j
 =-\langle e_j,E_{n+1}\rangle\langle N_v,E_{n+1}\rangle
 +\sum_k h_{jk}R_k.
\end{aligned}
\]
Choose the principal frame at this point. The equation and
Codazzi equations give
\[
 T_v^{ij}h_{ij}=2\lambda_v,\qquad
 T_v^{ij}h_{ikj}=T_v^{ij}h_{ijk}=0.
\]
Differentiating \eqref{eq:phi-gradient} and contracting with $T_v^{ij}$, we obtain
\begin{equation}\label{eq:phi-contraction}
\begin{aligned}
 \Delta_{T_v}\Phi={}&
 \sum_i(H_v-\kappa_i)\kappa_i^2
 \Biggl\{\frac{|z|}{\langle N_v,E_{n+1}\rangle}
 \left(1+\frac{2\langle e_i,E_{n+1}\rangle^2}
                   {\langle N_v,E_{n+1}\rangle^2}\right)\\
 &\hspace{9mm}
 +\frac{2\langle e_i,E_{n+1}\rangle}{\langle N_v,E_{n+1}\rangle^2}
 \left[R_i-\frac{\langle e_i,E_{n+1}\rangle}{\langle N_v,E_{n+1}\rangle}
                    \langle(z,0),N_v\rangle\right]\Biggr\}\\
 &-\frac{2}{|z|\langle N_v,E_{n+1}\rangle^2}
       \sum_i(H_v-\kappa_i)\kappa_i\langle e_i,E_{n+1}\rangle R_i-\frac{2\lambda_v}{\langle N_v,E_{n+1}\rangle}\\
 &+\frac{\mathcal G(Dv)}{|z|}
 \left[\sum_i(H_v-\kappa_i)
 \left(1-\langle e_i,E_{n+1}\rangle^2-\frac{R_i^2}{|z|^2}\right)
 -2\lambda_v\langle(z,0),N_v\rangle\right].
\end{aligned}
\end{equation}
Since
\[
 \left|e_i-\frac{\langle e_i,E_{n+1}\rangle}
                     {\langle N_v,E_{n+1}\rangle}N_v\right|^2
 =1+\frac{\langle e_i,E_{n+1}\rangle^2}{\langle N_v,E_{n+1}\rangle^2},
 \qquad
 \frac{|\langle e_i,E_{n+1}\rangle|}{\langle N_v,E_{n+1}\rangle}
 \le |Dv|\le K,
\]
the expression in braces in \eqref{eq:phi-contraction} is at least
\[
\frac{|z|}{\langle N_v,E_{n+1}\rangle}
\left(\sqrt{1+\frac{\langle e_i,E_{n+1}\rangle^2}
{\langle N_v,E_{n+1}\rangle^2}}
-\frac{|\langle e_i,E_{n+1}\rangle|}
{\langle N_v,E_{n+1}\rangle}\right)^2
 \ge\frac{|z|}{4(1+K^2)}.
\]
Using $|R_i|\le|z|$ and
$\langle N_v,E_{n+1}\rangle^{-1}\le\sqrt{1+K^2}$,
Young's inequality gives
\[
\begin{aligned}
 -&\frac{2}{|z|\langle N_v,E_{n+1}\rangle^2}
       \sum_i(H_v-\kappa_i)\kappa_i\langle e_i,E_{n+1}\rangle R_i\\
 &\ge-2K\sqrt{1+K^2}\sum_i(H_v-\kappa_i)|\kappa_i|\\
 &\ge-\frac{|z|}{8(1+K^2)}\sum_i(H_v-\kappa_i)\kappa_i^2
       -\frac{C(n,K)H_v}{|z|}.
\end{aligned}
\]
Since $E_{n+1}\perp(z,0)$,
\[
 1-\langle e_i,E_{n+1}\rangle^2-\frac{R_i^2}{|z|^2}\ge0.
\]
Together with $\mathcal G(Dv)\ge0$, this bounds the remaining terms in
\eqref{eq:phi-contraction} from below by
\[
\begin{aligned}
 &-\frac{2\lambda_v}{\langle N_v,E_{n+1}\rangle}-\frac{2\lambda_v\mathcal G(Dv)}{|z|}\langle(z,0),N_v\rangle
 \\
 &\qquad\ge-2\lambda_v
       \left(\frac{2}{\langle N_v,E_{n+1}\rangle}-1\right)
 \ge-\frac{C(K,\lambda_v)H_v}{|z|}.
\end{aligned}
\]
Here $H_v^2=\sum_i\kappa_i^2+2\lambda_v\ge2\lambda_v$ and
$0<|z|<2/3$. Combining the estimates proves \eqref{eq:phi-bound}.

Finally, $\sum_i e_i\otimes e_i=I-N_v\otimes N_v$ gives
\[
\begin{aligned}
 &\sum_iR_i^2
 =|z|^2-\langle(z,0),N_v\rangle^2
 =|z|^2-\frac{(z\cdot Dv)^2}{1+|Dv|^2},\\
 &\sum_iR_i\langle e_i,E_{n+1}\rangle
 =-\langle(z,0),N_v\rangle\langle N_v,E_{n+1}\rangle,\\
 &\sum_i\langle e_i,E_{n+1}\rangle^2
 =1-\langle N_v,E_{n+1}\rangle^2.
\end{aligned}
\]
Since $|z\cdot Dv|\le|z||Dv|$ and $|Dv|\le K$, the first identity yields \eqref{eq:R-lower-bound}. Substituting these identities into the definition of $Z_i$, we obtain
\[
\begin{aligned}
 \sum_iR_iZ_i
 &=\frac{|z|}{\langle N_v,E_{n+1}\rangle}
       \bigl(|z|-\langle(z,0),N_v\rangle\bigr)\\
 &=\frac{|z|^2}{\langle N_v,E_{n+1}\rangle}
             -|z|\,z\cdot Dv,\\
 \sum_iZ_i^2
 &=\frac{1}{\langle N_v,E_{n+1}\rangle^4}
       \bigl(|z|-\langle(z,0),N_v\rangle\bigr)^2.
\end{aligned}
\]
Now \eqref{eq:Z-identities} follows from $|\langle(z,0),N_v\rangle|\le|z|$,
$\langle N_v,E_{n+1}\rangle^{-1}\le\sqrt{1+K^2}$, and
\[
 \frac1{\langle N_v,E_{n+1}\rangle}-|Dv|
 =\frac1{\sqrt{1+|Dv|^2}+|Dv|}
 \ge\frac1{\sqrt{1+K^2}+K}.
\]
\end{proof}

We now combine Lemma \ref{lem:seed}, Lemma \ref{lem:phi}, the Lipschitz bound for \(d\), and
the two-surface inequality in a maximum-principle argument
to obtain a uniform lower bound for $d$ in \(B_{1/2}\).
\begin{proof}[Proof of Theorem \ref{thm:separation}]
By Lemma \ref{lem:seed}, there are
$M_0=M_0(n,K,\lambda_u,\lambda_v)\in(0,1/16]$ and
$y_0\in\overline B_{1/8}$ such that $d(y_0)\ge M_0$.
Set
\begin{equation}\label{eq:seed-ball}
 r=\min\left\{\frac{M_0}{2\sqrt{1+K^2}},\frac1{40}\right\}.
\end{equation}
Then $d\geq\frac{M_0}{2}$ in $B_r(y_0)$. Use the notation of Lemma~\ref{lem:phi}, with $\beta$ chosen below,
on $B_{2/3}(y_0)\subset B_{19/24}\subset B_{7/8}$, and put
\[
 \rho=\frac49-|z|^2,\qquad b=-\log d.
\]
Initially choose
\begin{equation}\label{eq:preliminary-parameters}
 0<\beta\le1,\qquad
 \beta\bigl(\sqrt{1+K^2}-1\bigr)\le r,\qquad
 S\ge\max\left\{4(n-1),\log\frac2{M_0}\right\}.
\end{equation}
Their final values will be fixed later. Let \(\bar y\) be a maximum point of
\begin{equation}\label{eq:aux}
 2\log\rho+\beta\Phi+\log\max\{b,S\}\quad\hbox{in }B_{2/3}(y_0).
\end{equation}
Since \(\rho\) vanishes on the boundary, the
maximum is attained in the interior. If
\(b(\bar y)\leq S\), then for \(y\in B_{1/2}\),
\[
 \rho(y)^2e^{\beta\Phi(y)}\max\{b(y),S\}
 \leq\rho(\bar y)^2e^{\beta\Phi(\bar y)}S\leq C(K)S,
\]
and hence \(b\leq C(K)S\) in \(B_{1/2}\).

We may therefore assume that \(b(\bar y)>S\). Moreover, \eqref{eq:seed-ball} yields
\[
 |\bar y-y_0|\geq r.
\]
At \(\bar y\), this implies \(|z|\geq r>0\), so all the preceding
differential identities are valid in a neighborhood of \(\bar y\).
For \(y\in B_{2/3}(y_0)\), the horizontal distance from \(y\) to
\(\partial B_1\) is at least \(5/24\). At \(\bar y\),
\[
 d(\bar y)=e^{-b(\bar y)}<e^{-S}
 \leq\frac{M_0}{2}\leq\frac1{32}<\frac5{24}.
\]
Hence every nearest point to \(X_v(\bar y)\) lies in the interior of
\(\Sigma_u\).

The maximality of \eqref{eq:aux} gives
\[
 b(y)\leq\max\{b(y),S\}
 \leq b(\bar y)
 \left(\frac{\rho(\bar y)}{\rho(y)}\right)^2
 e^{\beta(\Phi(\bar y)-\Phi(y))}.
\]
The right-hand side is a \(C^2\) upper support for \(b\) at \(\bar y\).
Choose a nearest point \(X_u(x_*)\) to \(X_v(\bar y)\). Since
\[
 b(y)+\log|X_v(y)-X_u(x)|\geq0,
 \qquad x\in\overline B_1,
\]
the sum of this upper support and
\(\log|X_v(y)-X_u(x)|\) is nonnegative near \((x_*,\bar y)\) and
vanishes there. Proposition \ref{prop:distance-jacobi} therefore
applies to the support.
Hereafter, \(b_i\) and \(\Delta_{T_v}b\) denote its derivatives at
\(\bar y\). Replacing \(b\) locally by this support, by construction,
\[
 2\log\rho+\beta\Phi+\log b
\]
is constant near \(\bar y\).

From now on, all quantities are evaluated at \(\bar y\), where we
again choose a principal frame. Differentiation and Lemma~\ref{lem:phi} give
\begin{equation}\label{eq:first-order}
\begin{aligned}
 0&=-\frac{4R_i}{\rho}+\beta\Phi_i+\frac{b_i}b,\\
 \frac{b_i}b
 &=\left(\frac4\rho-\frac{\beta\mathcal G(Dv)}{|z|}\right)R_i
 +\beta \kappa_iZ_i.
\end{aligned}
\end{equation}
Differentiating once more yields
\[
\begin{aligned}
 &(2\log\rho+\beta\Phi+\log b)_{ij}\\
 &\qquad=-\frac4\rho\left(\delta_{ij}
 -\langle e_i,E_{n+1}\rangle\langle e_j,E_{n+1}\rangle-h_{ij}\langle(z,0),N_v\rangle\right)
 -\frac{8R_iR_j}{\rho^2}
 +\beta\Phi_{ij}+(\log b)_{ij}.
\end{aligned}
\]
The cutoff terms satisfy
\[
\begin{aligned}
 \Delta_{T_v}(2\log\rho)
 ={}&-\frac4\rho\sum_i(H_v-\kappa_i)\bigl(1-\langle e_i,E_{n+1}\rangle^2\bigr)
 +\frac{8\lambda_v}\rho\langle(z,0),N_v\rangle\\
 &-\frac8{\rho^2}\sum_i(H_v-\kappa_i)R_i^2
 \ge-\frac{C(n,K,\lambda_v)H_v}{\rho^2}.
\end{aligned}
\]
Since $b>S\ge4(n-1)$, Proposition \ref{prop:distance-jacobi} gives
\[
\begin{aligned}
 \Delta_{T_v}\log b
 &\ge\left(\frac b{2(n-1)}-1\right)
       \sum_i(H_v-\kappa_i)\left(\frac{b_i}b\right)^2\\
 &\ge\frac b{4(n-1)}\sum_i(H_v-\kappa_i)\left(\frac{b_i}b\right)^2.
\end{aligned}
\]
Combining these estimates with Lemma~\ref{lem:phi} and
\eqref{eq:first-order}, and using $|z|\ge r$ and $\beta\le1$, we obtain
\begin{equation}\label{eq:master}
\begin{aligned}
 0\ge{}&-\frac{CH_v}{\rho^2}
 +\frac{\beta|z|}{8(1+K^2)}\sum_i(H_v-\kappa_i)\kappa_i^2\\
 &+\frac b{4(n-1)}\sum_i(H_v-\kappa_i)
 \left\{\left(\frac4\rho-\frac{\beta\mathcal G(Dv)}{|z|}\right)R_i
 +\beta \kappa_iZ_i\right\}^2.
\end{aligned}
\end{equation}
Here $C=C(n,K,\lambda_u,\lambda_v)\ge1$ is fixed independently
of the final choices of $\beta$ and $S$, and the term
$C\beta H_v/|z|$ has been absorbed into $CH_v/\rho^2$.

Order the principal curvatures so that
$\kappa_1\ge\cdots\ge\kappa_n$. Since
$H_v^2=\sum_i\kappa_i^2+2\lambda_v$ and $H_v>0$,
\begin{equation}\label{eq:spectral-bounds}
 \kappa_1\ge\frac{H_v}{n},\qquad
 H_v-\kappa_i\geq\left(1-\frac{\sqrt{2}}{2}\right)H_v>\frac{H_v}{4}\quad(i\ge2),\qquad
 H_v-\kappa_1\ge\frac{\lambda_v}{H_v},
\end{equation}
here the last inequality follows from
\[
 H_v(H_v-\kappa_1)-\lambda_v
 =\frac12\left[(H_v-\kappa_1)^2+\sum_{i=2}^n\kappa_i^2\right]\ge0.
\]

We now fix $\beta>0$ sufficiently small and then $S$ sufficiently
large so that \eqref{eq:preliminary-parameters} and
\begin{equation}\label{eq:parameter-choice}
 \frac{4(n-1)}S+32(1+K^2)^3\beta
 \le\frac{r^2}{400C(1+K^2)^4}
\end{equation}
hold. By \eqref{eq:preliminary-parameters} and $|z|\geq r$, we have
\begin{equation}\label{eq:radial-coefficient}
 \frac2\rho\le
 \frac4\rho-\frac{\beta\mathcal G(Dv)}{|z|}
 \le\frac4\rho.
\end{equation}
We now divide the remainder of the proof into two cases.

\emph{Case 1.} Suppose that
\[
 \sum_{i=2}^nR_i^2\ge\frac{|z|^2}{100(1+K^2)^4}.
\]
Completion of the square in $\kappa_i$ gives
\begin{equation}\label{eq:completed-square}
\begin{aligned}
 &\frac{\beta|z|}{8(1+K^2)}\kappa_i^2
 +\frac b{4(n-1)}
 \left\{\left(\frac4\rho-\frac{\beta\mathcal G(Dv)}{|z|}\right)R_i
 +\beta\kappa_iZ_i\right\}^2\\
 &\quad\ge
 \left[\frac{4(n-1)}b+\frac{8\beta(1+K^2)Z_i^2}{|z|}\right]^{-1}
 \left(\frac4\rho-\frac{\beta\mathcal G(Dv)}{|z|}\right)^2R_i^2\\
 &\quad\ge
 \frac{4R_i^2}{\rho^2}
 \left[\frac{4(n-1)}b+32(1+K^2)^3\beta|z|\right]^{-1}
 \ge\frac{1600C(1+K^2)^4}{r^2\rho^2}R_i^2.
\end{aligned}
\end{equation}
Here we have used Lemma~\ref{lem:phi}, \eqref{eq:radial-coefficient},
$b>S$, $|z|<1$, and \eqref{eq:parameter-choice}.
Multiplying by $H_v-\kappa_i$ and summing over $i\ge2$, the positive
terms in \eqref{eq:master} are at least
\[
 \frac{400C(1+K^2)^4H_v}{r^2\rho^2}\sum_{i=2}^nR_i^2
 \ge\frac{4CH_v}{\rho^2},
\]
which is a contradiction.

\emph{Case 2.} It remains to consider
\[
 \sum_{i=2}^nR_i^2<\frac{|z|^2}{100(1+K^2)^4}.
\]
In this case Lemma~\ref{lem:phi} gives
\[
 R_1^2=\sum_iR_i^2-\sum_{i=2}^nR_i^2
 \ge\frac{|z|^2}{1+K^2}-\frac{|z|^2}{100(1+K^2)^4}
 \ge\frac{99|z|^2}{100(1+K^2)}>0.
\]
Using Lemma~\ref{lem:phi} again together with the
Cauchy--Schwarz inequality, we obtain
\begin{equation}\label{eq:same-sign}
\begin{aligned}
 R_1Z_1
 &=\sum_iR_iZ_i-\sum_{i=2}^nR_iZ_i\\
 &\ge\frac{|z|^2}{\sqrt{1+K^2}+K}
 -\frac{|z|}{10(1+K^2)^2}\,2(1+K^2)|z|\\
 &\ge\frac{|z|^2}{4(1+K^2)}>0.
\end{aligned}
\end{equation}
Since $|R_1|\le|z|$, we have
\[
 |Z_1|\ge\frac{|z|}{4(1+K^2)}.
\]
By \eqref{eq:radial-coefficient}, \eqref{eq:same-sign}, and
$\kappa_1>0$, we have
\[
\begin{aligned}
 &\frac b{4(n-1)}(H_v-\kappa_1)
 \left\{\left(\frac4\rho-\frac{\beta\mathcal G(Dv)}{|z|}\right)R_1
 + \beta \kappa_1Z_1\right\}^2\\
 &\qquad\ge\frac {\beta^2 b}{4(n-1)}(H_v-\kappa_1)\kappa_1^2Z_1^2\\
 &\qquad\ge\frac{b\lambda_v\beta^2|z|^2H_v}
 {64n^2(n-1)(1+K^2)^2}.
\end{aligned}
\]
Here \eqref{eq:spectral-bounds} gives
$(H_v-\kappa_1)\kappa_1^2\ge\lambda_vH_v/n^2$.
Keeping only this term in \eqref{eq:master}, we conclude that
\begin{equation}\label{eq:weighted-b-bound}
 \rho(\bar y)^2b(\bar y)
 \le\frac{64n^2(n-1)(1+K^2)^2C}
 {\lambda_v\beta^2|\bar y-y_0|^2}
 \le\frac{64n^2(n-1)(1+K^2)^2C}{\lambda_v\beta^2r^2}.
\end{equation}

Finally, whether $b(\bar y)\le S$ or $b(\bar y)>S$, the maximum
property and \eqref{eq:weighted-b-bound} give
\[
 \rho(y)^2e^{\beta\Phi(y)}\max\{b(y),S\}
 \le\rho(\bar y)^2e^{\beta\Phi(\bar y)}
       \max\{b(\bar y),S\}\le C.
\]
For $y\in B_{1/2}$,
\[
 \rho(y)\ge\frac49-\left(\frac12+\frac18\right)^2
 =\frac{31}{576},\qquad |\Phi(y)|\le C(K).
\]
Therefore $b\le C$ on $B_{1/2}$, and
\[
 v-u\ge d=e^{-b}\ge e^{-C}\quad\hbox{on }B_{1/2}.
\]
This proves the theorem.
\end{proof}

\section{The Pogorelov estimate and the interior estimate}
\label{sec:pogorelov}
In this section, we establish a comparison Pogorelov estimate for the
\(\sigma_2\)-curvature equation. We then combine this estimate with
the uniform separation obtained in Section~\ref{sec:separation} and
a boundary-decay argument to prove the interior curvature estimate
stated in Theorem~\ref{thm:main}.

Throughout this section, we write
\begin{equation*}
H:=H[u],\qquad b:=\log H.
\end{equation*}

\subsection{The mean-curvature Jacobi-type inequality}\label{subsec:jacobi}
Our starting point is the projection argument of Shankar--Yuan
\cite{ShankarYuan}. Rather than imposing a dynamic semiconvexity
condition that makes every gradient coefficient nonnegative, we keep
the directional coefficients \(\Theta_i\) explicit and classify the
indices according to whether their contributions are favorable. We
show that every unfavorable direction satisfies
\(\lvert\kappa_i\rvert>H/8\), which allows its negative gradient
contribution to be controlled later by the curvature-square term
\(T_u^{ij}h_i^kh_{kj}\).
\begin{proposition}\label{prop:jacobi}
Let $n\geq3$. Suppose that $u$ satisfies \eqref{eq:intro-equation} and $H\geq2$. Then at a point where $h_i^j$ is diagonal, the following Jacobi-type inequality holds:
\begin{equation}\label{eq:jacobi}
\Delta_{T_u}b\geq 2H-\frac{4}{H}-T_u^{ij}h_i^kh_{kj}+\sum_{i=1}^n\Theta_iT_u^{ii}b_i^2,
\end{equation}
where
\begin{equation}\label{def:Thetai}
\Theta_i:=\frac{H}{H-\kappa_i}
 \left[
 \frac{3\bigl((n+1)H^2-4H\kappa_i+2\kappa_i^2-2\bigr)}
 {(n+1)H^2-4H\kappa_i+2n\kappa_i^2-2(n+2)}-1
 \right]-1.
\end{equation}
\end{proposition}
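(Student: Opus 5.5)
The proof follows the classical route: differentiate the equation twice, commute covariant derivatives using the Gauss and Codazzi equations, and divide by $H$. Fix a point and an orthonormal frame with $h_{ij}=\kappa_i\delta_{ij}$, so that $T_u^{ij}=(H-\kappa_i)\delta_{ij}$.

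\textbf{Step 1: the derivatives of the equation.} Differentiating $\sigma_2(h)=1$ once gives $\sum_p(H-\kappa_p)h_{ppk}=0$ for each $k$. Differentiating twice gives
\[
T_u^{ij}h_{ijkk}=-\sigma_2^{pq,rs}h_{pqk}h_{rsk}=\sum_{p\neq q}h_{pqk}^2-\sum_{p\neq q}h_{ppk}h_{qqk}.
\]
Using $\sum_{p\neq q}h_{ppk}h_{qqk}=H_k^2-\sum_p h_{ppk}^2$, the right-hand side becomes $\sum_{p,q}h_{pqk}^2-H_k^2$.

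\textbf{Step 2: commuting derivatives.} The commutation formula for hypersurfaces in $\mathbb R^{n+1}$ gives
\[
h_{kkij}=h_{ijkk}+h_{ij}h_{km}h_{mk}-h_{im}h_{mj}h_{kk}+(\text{terms that vanish after contraction}).
\]
Contracting with $T_u^{ij}$ and summing over $k$, and using $T_u^{ij}h_{ij}=2$ and $|A|^2=H^2-2$, yields
\[
\Delta_{T_u}H=\sum_k\Bigl(\sum_{p,q}h_{pqk}^2-H_k^2\Bigr)+2(H^2-2)-H\,T_u^{ij}h_i^kh_{kj}.
\]
Since $\Delta_{T_u}b=\Delta_{T_u}H/H-T_u^{ii}b_i^2$, the zero-order part is exactly $2H-\frac4H-T_u^{ij}h_i^kh_{kj}$, as in \eqref{eq:jacobi}. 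It remains to prove the gradient inequality
\begin{equation*}
\frac1H\sum_k\Bigl(\sum_{p,q}h_{pqk}^2-H_k^2\Bigr)\geq\sum_i(1+\Theta_i)(H-\kappa_i)b_i^2 .
\end{equation*}

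\textbf{Step 3: reduction to one direction at a time.} The key point is that this inequality decouples in the direction $i$ of differentiation. By Codazzi, for a fixed index $i$ we keep $h_{iii}$, the terms $h_{ppi}$ with $p\neq i$, and the symmetric off-diagonal terms $h_{pii}=h_{iip}$, which occur twice in $\sum_{p,q}h_{pqk}^2$ with $k=p$. We discard everything else, since it is nonnegative.

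For fixed $i$ we then minimize the quadratic form $\sum_p h_{ppi}^2$ over vectors $(h_{ppi})_p$ subject to two linear constraints:
\[
\sum_p(H-\kappa_p)h_{ppi}=0,\qquad \sum_p h_{ppi}=H_i=Hb_i .
\]
This is the Shankar--Yuan projection. The minimum is $H_i^2$ divided by the squared length of the projection of $(1,\dots,1)$ onto the orthogonal complement of $(H-\kappa_p)_p$. We simplify that length using $\sum_p(H-\kappa_p)=(n-1)H$ and $\sum_p(H-\kappa_p)^2=(n-2)H^2+|A|^2=(n-1)H^2-2$.

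The remaining cross terms $2\sum_{p\neq i}h_{iip}^2$ contribute through the directions $p\neq i$. Splitting the $i$-th summand appropriately, in particular treating $h_{iii}$ separately from the other $h_{ppi}$, should produce the rational expression
\[
\frac{3\bigl((n+1)H^2-4H\kappa_i+2\kappa_i^2-2\bigr)}{(n+1)H^2-4H\kappa_i+2n\kappa_i^2-2(n+2)} .
\]
After subtracting the $-H_i^2$ term, this gives exactly $(H-\kappa_i)(1+\Theta_i)b_i^2$ with $\Theta_i$ as in \eqref{def:Thetai}. Note that $H/(H-\kappa_i)$ is precisely the factor converting $H_i^2/H$ into $T_u^{ii}b_i^2$ times $H/(H-\kappa_i)$.

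\textbf{Main obstacle.} The hard part is Step 3: carrying out the constrained minimization with the correct weighting between $h_{iii}$ and the Codazzi-symmetric entries $h_{iip}$, so that exactly this denominator appears. One must also check that the denominator is positive, which is where the hypotheses $H\geq2$ and $\sigma_2=1$ enter. I would first work the case of the single vector $(h_{11i},\dots,h_{nni})$ with Lagrange multipliers. I would then add the optimal share of $h_{iip}^2$ and verify the algebraic identity symbolically, using $\sum_p\kappa_p^2=H^2-2$ to eliminate all curvatures other than $\kappa_i$.
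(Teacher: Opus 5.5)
Your Steps 1 and 2 are correct and agree with the paper's derivation of $\Delta_{T_u}H$ and the reduction to a gradient inequality. Your plan for Step 3, a Shankar--Yuan projection carried out one differentiation direction at a time, is also the paper's plan. The gap is that Step 3 is the whole content of the proposition and is never carried out, and the minimization you actually write down is the wrong one. Minimizing the unweighted $\sum_p h_{ppi}^2$ subject to $\mathbf t\cdot\mathbf f=0$ and $\mathbf 1\cdot\mathbf t=H_i$ gives only $H_i^2/|L|^2=\bigl(1+\frac{2(n-1)}{(n-1)H^2-2n}\bigr)H_i^2$. In any direction with $H-\kappa_i\geq cH$ this yields $1+\Theta_i=O(H^{-2})$, whereas the stated $\Theta_i$ is at least $1/2$ when $|\kappa_i|\leq H/8$. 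Your bookkeeping of the Codazzi-symmetric terms is also off. For $p\neq i$, $h_{iip}$ is a coordinate of the direction-$p$ vector $(h_{aap})_a$ and does not enter the constraints of direction $i$. Attaching it to direction $i$ therefore gains nothing there, and there is no ``optimal share'' to choose.

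The missing ingredient is the exact multiplicity count. Since $h_{abk}$ is totally symmetric,
\[
\sum_{a,b,k}h_{abk}^2=6\sum_{a<b<k}h_{abk}^2+3\sum_{i}\sum_{p\neq i}h_{ppi}^2+\sum_ih_{iii}^2,
\]
and the value $h_{ppi}$ with $p\neq i$ enters only the constraints of direction $i$. So for fixed $i$ you must minimize $3\sum_{p\neq i}t_p^2+t_i^2$ over $\mathbf t=(h_{11i},\dots,h_{nni})$ subject to $\mathbf t\cdot\mathbf f=0$ and $\mathbf 1\cdot\mathbf t=H_i$. Write $F=|\mathbf f|^2=(n-1)H^2-2$, $s=\mathbf 1\cdot\mathbf f=(n-1)H$ and $f_i=H-\kappa_i$. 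Lagrange multipliers then give the minimum $\mu_iH_i^2$ with
\[
\mu_i=\frac{3(F+2f_i^2)}{(n+2)(F+2f_i^2)-(s+2f_i)^2}.
\]
Expanding gives exactly the rational expression in \eqref{def:Thetai}, and the relation $\mu_i=1+\frac{H-\kappa_i}{H}(1+\Theta_i)$ is precisely the definition of $\Theta_i$. The denominator is $9$ times a Gram determinant and is bounded below by $2n-8/n$ once $H\geq2$. The paper obtains the same $\mu_i$ as the threshold at which $3I-2E\otimes E-\mu L\otimes L$ ceases to be nonnegative on $\mathbf f^\perp$. It computes this via a $2\times2$ determinant on $\operatorname{span}\{E,L\}$, where $E$ and $L$ are the projections onto $\mathbf f^\perp$ of the $i$-th coordinate vector and of $\mathbf 1$. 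Once the weights $3$ and $1$ are in place, this is equivalent to your Lagrange-multiplier formulation.
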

\begin{proof}
All calculations are made at the chosen point.  Differentiating the equation
$\sigma_2(\kappa)=1$ once and twice in the direction $e_p$ gives
\begin{equation}\label{eq:diff-eq}
T_u^{ii}h_{iip}=0,\qquad T_u^{ii}h_{iipp}
 =\sum_{i,j}h_{ijp}^2-H_p^2.
\end{equation}
For fixed \(i,p\), the Codazzi and Gauss equations give
\begin{equation*}
 h_{ppii}
 =h_{iipp}+\kappa_i\kappa_p(\kappa_p-\kappa_i).
\end{equation*}
It follows that
\begin{equation}\label{eq:Delta-H}
 \Delta_{T_u}H=\sum_{p,i,j}h_{ijp}^2-|\nabla H|^2
   +2H^2-4-HT_u^{ij}h_i^kh_{kj}.
\end{equation}
Therefore,
\begin{align}
&\Delta_{T_u}b-2H+\frac{4}{H}+T_u^{ij}h_i^kh_{kj}-\sum_{i=1}^n\Theta_iT_u^{ii}b_i^2\notag\\
=&6H^{-1}\sum_{i<j<p}h_{ijp}^2+3H^{-1}\sum_{i,p}h_{iip}^2-2H^{-1}\sum_p h_{ppp}^2\notag\\
&-H^{-1}\sum_p H_p^2-H^{-2}\sum_p (1+\Theta_p)T_u^{pp}H_p^2.\label{eq:hijp}
\end{align}

We now estimate the above third-order terms by the projection argument of
Shankar--Yuan \cite{ShankarYuan}.  Fix \(p\) and denote
\[
\begin{aligned}
 \mathbf t&=(h_{11p},\ldots,h_{nnp}),
 &\mathbf f&=(T_u^{11},\ldots,T_u^{nn}),\\
 \mathbf 1&=(1,\ldots,1),
 &E_p&=(0,\ldots,0,\underbrace{1}_{p\text{-th}},0,\ldots,0).
\end{aligned}
\]
The first identity in \eqref{eq:diff-eq} shows that
\(\mathbf t\cdot\mathbf f=0\).  Since
\(\sigma_2(\kappa)=1\), we also have
\begin{equation*}
 |\mathbf f|^2=(n-1)H^2-2,
 \qquad
 \mathbf 1\cdot\mathbf f=(n-1)H.
\end{equation*}
Project $E_p$ and $\mathbf 1$ onto $\mathbf f^\perp$:
\begin{equation*}
 E=E_p-\frac{T_u^{pp}}{(n-1)H^2-2}\mathbf f,
 \qquad
 L=\mathbf 1-\frac{(n-1)H}{(n-1)H^2-2}\mathbf f.
\end{equation*}
Then
\begin{equation*}
 h_{ppp}=E\cdot \mathbf t,
 \qquad
 H_p=L\cdot \mathbf t,
\end{equation*}
and a direct calculation yields
\begin{equation*}
 |E|^2=1-\frac{(T_u^{pp})^2}{(n-1)H^2-2},\qquad |L|^2=1-\frac{2(n-1)}{(n-1)H^2-2},\qquad E\cdot L=1-\frac{(n-1)HT_u^{pp}}{(n-1)H^2-2}.
\end{equation*}
For a real number \(\mu\), consider on \(\mathbf f^\perp\) the
quadratic form
\[
 3|\mathbf t|^2-2(E\cdot\mathbf t)^2
 -\mu(L\cdot\mathbf t)^2.
\]
Since \(E\) is the orthogonal projection of \(E_p\) onto
\(\mathbf f^\perp\), we have \(|E|\leq1\), and hence
\[
 3I-2E\otimes E\geq I.
\]
As \(\mu\geq0\) increases, the rank-one perturbation
\(3I-2E\otimes E-\mu L\otimes L\) remains positive semidefinite
until its smallest eigenvalue first reaches zero.  The corresponding
value is the unique positive zero of its determinant. If \(E\) and \(L\) are linearly independent, direct calculation yields
\begin{align}
 &\det\!\left(
  3I-2E\otimes E-\mu L\otimes L
  \right)\big|_{\operatorname{span}\{E,L\}}\notag\\
 &\quad=
 \frac{
  3\bigl((n+1)H^2-4H\kappa_p+2\kappa_p^2-2\bigr)
  -\mu\bigl((n+1)H^2-4H\kappa_p
             +2n\kappa_p^2-2(n+2)\bigr)}
 {(n-1)H^2-2}\label{eq:det}.
\end{align}
If \(E\) and \(L\) are linearly dependent, write \(L=tE\).  Then
\[
\det\!\left(
 \left.
 \bigl(3I-2E\otimes E-\mu L\otimes L\bigr)
 \right|_{\operatorname{span}\{E\}}
\right)
=
3-(2+\mu t^2)|E|^2.
\]
The right-hand side of \eqref{eq:det} is three times this
one-dimensional determinant:
\[
3\bigl[3-(2+\mu t^2)|E|^2\bigr].
\]
Hence the two expressions have the same zero in \(\mu\).

We next show that the two expressions appearing in the numerator of
\eqref{eq:det} are positive.
\[
 (n+1)H^2-4H\kappa_p+2\kappa_p^2-2
 =
 2(H-\kappa_p)^2+(n-1)H^2-2>0,
\]
and
\begin{align*}
 &(n+1)H^2-4H\kappa_p+2n\kappa_p^2-2(n+2)\\
 &\quad=
 2n\left(\kappa_p-\frac Hn\right)^2
 +\left(n+1-\frac2n\right)H^2-2(n+2)\\
 &\quad\geq2n-\frac8n>0.
\end{align*}
Thus the critical value is
\begin{equation}\label{eq:mu-p}
 \mu_p=
 \frac{3\bigl((n+1)H^2-4H\kappa_p+2\kappa_p^2-2\bigr)}
 {(n+1)H^2-4H\kappa_p+2n\kappa_p^2-2(n+2)}>0.
\end{equation}
Consequently,
\[
 3|\mathbf t|^2-2(E\cdot\mathbf t)^2
 -\mu_p(L\cdot\mathbf t)^2\geq0.
\]
Equivalently, \eqref{def:Thetai} can be written as
\[
 \mu_p=1+\frac{T_u^{pp}}H(1+\Theta_p).
\]
Thus the terms in \eqref{eq:hijp} corresponding to each fixed \(p\)
are nonnegative.  Summing over \(p\), together with the remaining
nonnegative terms, proves \eqref{eq:jacobi}.

\end{proof}

Set
\begin{equation}\label{def:good-bad-directions}
 I:=\left\{i:\Theta_i\geq\frac12\right\},
 \qquad
 I^c:=\left\{i:\Theta_i<\frac12\right\}.
\end{equation}
We next establish a uniform lower bound for \(\Theta_i\), together
with an improved lower bound when \(|\kappa_i|\leq H/8\).
\begin{lemma}\label{lem:weak-directions}
Suppose that $n\geq3$, $\kappa\in\Gamma_2$,
$\sigma_2(\kappa)=1$, and $H\geq2$. Then, for every $i$,
\begin{equation}\label{eq:Theta-uniform-lower}
 \Theta_i\geq-\frac{n-4}{n}.
\end{equation}
Moreover,
\begin{equation}\label{eq:Theta-central}
 |\kappa_i|\leq\frac H8
 \quad\Longrightarrow\quad
 \Theta_i\geq\frac{1}{2}.
\end{equation}
\end{lemma}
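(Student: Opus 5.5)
Both bounds reduce to one-variable inequalities once $H$ and $\kappa_i$ are fixed. Write $\kappa=\kappa_i$, $x=\kappa/H$, and
\[
 P=(n+1)H^2-4H\kappa+2\kappa^2-2,\qquad
 Q=(n+1)H^2-4H\kappa+2n\kappa^2-2(n+2),
\]
so that, as in the proof of Proposition~\ref{prop:jacobi}, $\mu_i=3P/Q>0$ and
\[
\Theta_i=\frac{H}{H-\kappa}\Bigl(\frac{3P-Q}{Q}\Bigr)-1,\qquad
3P-Q=2(n+1)H^2-8H\kappa-(2n-6)\kappa^2-2(4-n).
\]
The available constraints come from $\kappa\in\Gamma_2$, $\sigma_2=1$ and $H\ge2$. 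First, $H-\kappa>0$. Second, $\sum_{j\ne i}\kappa_j^2=H^2-2-\kappa^2\ge (H-\kappa)^2/(n-1)$ by Cauchy--Schwarz, which gives $\kappa\le \frac{H}{n}+\frac{n-1}{n}\sqrt{H^2-\frac{2n}{n-1}}$ and in particular $x<1$. Third, $H-\kappa\ge 1/H$ from \eqref{eq:spectral-bounds} with $\lambda=1$. Fourth, $Q\ge 2n-8/n>0$.

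\emph{The central bound \eqref{eq:Theta-central}.} Here $|x|\le1/8$. I will show $\frac{H}{H-\kappa}\cdot\frac{3P-Q}{Q}\ge\frac32$. Dividing by $H^2$ and using $1/H^2\le1/4$, the lower-order terms $-2/H^2$ and $-2(n+2)/H^2$ only shift the quadratics in $x$ by bounded amounts. I will lower-bound $(3P-Q)/H^2$ and upper-bound $Q/H^2$ separately on $|x|\le1/8$. Since $Q$ appears in the denominator, its constant term $-2(n+2)/H^2\le0$ can be dropped, and $Q/H^2\le n+1-4x+2nx^2$.

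The sign of the numerator's constant term depends on $n$. For $n\ge4$, $-2(4-n)/H^2\ge0$ and can be dropped. For $n=3$, it equals $-2/H^2\ge-1/2$.

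It then remains to check
\[
\frac{2(n+1)-8x-(2n-6)x^2+c_n}{(1-x)(n+1-4x+2nx^2)}\ge\frac32,\qquad |x|\le\tfrac18,
\]
where $c_n$ is $0$ for $n\ge4$ and $-1/2$ for $n=3$. At $x=0$ the left side is roughly $2$, and the perturbation in $x$ is small, so the inequality should hold with margin. I would verify it by monotonicity or by clearing denominators and checking a cubic in $x$ on $[-1/8,1/8]$, uniformly in $n$. The $x^2$ terms scale like $n$ but are multiplied by $x^2\le1/64$, which keeps them under control.

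\emph{The uniform bound \eqref{eq:Theta-uniform-lower}.} This is equivalent to
\[
H(3P-Q)\ge \frac{4}{n}(H-\kappa)Q.
\]
When $3P-Q\ge0$ this follows by a direct comparison. The dangerous regime is $\kappa$ large, $x\to1$, where $3P-Q$ can become negative once $n>3$. There I must use the upper bound on $\kappa$ from the Cauchy--Schwarz constraint, which forces $x\le \frac1n+\frac{n-1}{n}$ and, more precisely, $H-\kappa\ge \frac{H}{n}\bigl(1-\sqrt{1-\frac{2n}{(n-1)H^2}}\bigr)\cdot(n-1)+\ldots$. Near $x=1$, both $H(3P-Q)$ and $(H-\kappa)Q$ should be expressed in terms of $s=H-\kappa$.

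Setting $\kappa=H-s$ gives $P=(n-1)H^2+2s^2-2$ and $Q=(n-1)H^2-2\bigl(n-2\bigr)... $ after expansion, i.e.\ a polynomial in $H$ and $s$. The inequality then becomes polynomial in $(H,s)$ on the region $s\ge1/H$ with $\sum_{j\ne i}\kappa_j^2\ge s^2/(n-1)$, that is, $H^2-2-(H-s)^2\ge s^2/(n-1)$. I would reduce to the boundary of this region and check the resulting one-variable inequality.

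I expect this last step, the uniform bound in the regime $\kappa_i$ close to $H$ with general $n$, to be the main obstacle. My plan there is to substitute $\kappa=H-s$, use $2Hs-s^2\frac{n}{n-1}\ge2$, and show that the polynomial difference is increasing in $s$ or sign-definite after eliminating $H^2$ via this constraint.
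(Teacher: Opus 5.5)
Your plan for the central bound \eqref{eq:Theta-central} works. After clearing denominators, the one-variable inequality becomes $(n+1)+2c_n+(3n-1)x-10nx^2+6nx^3\ge0$. Its derivative is at least $\tfrac n2-1>0$ on $[-\tfrac18,\tfrac18]$, and its value at $x=-\tfrac18$ is positive. There is one slip: the constant term of $3P-Q$ is $-6+2(n+2)=2(n-1)$, not $-2(4-n)$. Your version is only weaker, so nothing breaks.

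The genuine gap is the uniform bound \eqref{eq:Theta-uniform-lower}. You say it follows ``by a direct comparison'' whenever $3P-Q\ge0$, and you place the difficulty at $\kappa_i/H\to1$, where you expect $3P-Q<0$. Neither is right. The quadratic part of $3P-Q$ contains the factor $H-\kappa_i$:
\[
 3P-Q=2(H-\kappa_i)\bigl((n+1)H+(n-3)\kappa_i\bigr)+2(n-1).
\]
Both factors are positive (the second because $\kappa_i>-\frac{n-2}{n}H$), so $3P-Q>0$ always. Your ``easy case'' is therefore the whole problem, and you give no argument for it. Near $\kappa_i/H=1$, $\Theta_i+1$ is at least about $4/3$, nowhere near $4/n$.

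The bound is in fact asymptotically sharp at the opposite end. Take $\kappa=(-a,b,\dots,b)$ with $a=\frac{n-2}{2}b-\frac1{(n-1)b}$, so that $\sigma_2=1$, and let $b\to\infty$. Then $\kappa_1/H\to-\frac{n-2}{n}$ and $\Theta_1\to-\frac{n-4}{n}$. So any proof must use the \emph{lower} root of your own Cauchy--Schwarz constraint, namely $n\kappa_i+(n-2)H>0$, which you never extract. Your plan to show the polynomial difference is increasing in $s=H-\kappa_i$ cannot succeed: to leading order in $H$ the difference vanishes at $s=\frac{2(n-1)}{n}H$ and is decreasing there.

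With the factorization, the proof is short. It cancels the prefactor $H/(H-\kappa_i)$. Dropping the positive term $\frac{2(n-1)H}{H-\kappa_i}$ from the numerator and the term $-2(n+2)$ from $Q$ gives
\[
 \Theta_i+1\ge\frac{2H\bigl((n-3)\kappa_i+(n+1)H\bigr)}{(n+1)H^2-4H\kappa_i+2n\kappa_i^2}.
\]
Subtracting $\frac4n$ from the right-hand side gives
\[
 \frac{2\bigl((n-2)H+n\kappa_i\bigr)\bigl((n+1)H-4\kappa_i\bigr)}{n\bigl((n+1)H^2-4H\kappa_i+2n\kappa_i^2\bigr)}\ge0,
\]
since $n\kappa_i+(n-2)H>0$ and $(n+1)H-4\kappa_i\ge4(H-\kappa_i)>0$. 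Subtracting $\tfrac32$ instead leaves the numerator $\tfrac{n+1}{2}H^2+2nH\kappa_i-3n\kappa_i^2$, which is positive for $|\kappa_i|\le H/8$. So the same inequality also gives \eqref{eq:Theta-central} in one line, which is how the paper proves both parts.
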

\begin{proof}
Set \(s=H-\kappa_i>0\).  The Cauchy--Schwarz inequality gives
\[
 \sigma_2(\kappa)
 \leq
 \kappa_i s+\frac{n-2}{2(n-1)}s^2.
\]
Since \(\sigma_2(\kappa)>0\),
\[
 0<2(n-1)\kappa_i+(n-2)s
 =n\kappa_i+(n-2)H.
\]
In particular, \((n-3)\kappa_i+(n+1)H>0\).
By enlarging the positive denominator and decreasing the numerator,
we obtain
\begin{align*}
\Theta_i+1=\frac{2H\left((n-3)\kappa_i+(n+1)H\right)+\frac{2(n-1)H}{H-\kappa_i}}{(n+1)H^2-4H\kappa_i+2n\kappa_i^2-2(n+2)}\geq \frac{2H\left((n-3)\kappa_i+(n+1)H\right)}{(n+1)H^2-4H\kappa_i+2n\kappa_i^2}.
\end{align*}
It follows that
\begin{align*}
 \Theta_i+1-\frac{4}{n}&\geq\frac{-8n\kappa_i^2+(2n^2-6n+16)\kappa_iH+(2n^2-2n-4)H^2}{n\left((n+1)H^2-4H\kappa_i+2n\kappa_i^2\right)}\\
 &=\frac{2\left((n-2)H+n\kappa_i\right)\left((n+1)H-4\kappa_i\right)}{n\left((n+1)H^2-4H\kappa_i+2n\kappa_i^2\right)}.
\end{align*}
The first factor is positive by the preceding estimate.  On the
other hand, since \(n\geq3\) and \(H-\kappa_i>0\),
\[
 (n+1)H-4\kappa_i
 \geq4(H-\kappa_i)>0.
\]
Thus the product of the two factors is nonnegative, and
\eqref{eq:Theta-uniform-lower} follows.

For \eqref{eq:Theta-central}, a similar calculation yields
\begin{align*}
\Theta_i+1-\frac{3}{2}\geq \frac{-3n\kappa_i^2+2n\kappa_iH+\frac{n+1}{2}H^2}{(n+1)H^2-4H\kappa_i+2n\kappa_i^2}.
\end{align*}
Since \(|\kappa_i|\leq H/8\),
\begin{align*}
 &-3n\kappa_i^2+2n\kappa_iH+\frac{n+1}{2}H^2\\
 &\quad\geq
 \left(
 -\frac{3n}{64}-\frac n4+\frac{n+1}{2}
 \right)H^2
 =
 \frac{13n+32}{64}H^2>0.
\end{align*}
This proves \eqref{eq:Theta-central}.
\end{proof}

\begin{corollary}\label{cor:classified-jacobi}
Under the assumptions of Proposition~\ref{prop:jacobi}, with
\(I\) defined by
\eqref{def:good-bad-directions}, we have
\begin{equation}\label{eq:classified-gradient}
 \sum_i\Theta_iT_u^{ii}b_i^2
 \geq
 \frac12\sum_{i\in I}T_u^{ii}b_i^2
 -\frac{n-4}{n}
  \sum_{i\notin I}T_u^{ii}b_i^2.
\end{equation}
Moreover,
\begin{equation}\label{eq:bad-carrier}
 T_u^{ij}h_i^kh_{kj}\geq\frac{H^2}{64}
 \sum_{i\notin I}T_u^{ii}.
\end{equation}
Consequently, \eqref{eq:jacobi} implies
\begin{equation}\label{eq:classified-jacobi}
 \Delta_{T_u}b
 \geq
 H-T_u^{ij}h_i^kh_{kj}
 +\frac12\sum_{i\in I}T_u^{ii}b_i^2
 -\frac{n-4}{n}
  \sum_{i\notin I}T_u^{ii}b_i^2.
\end{equation}
\end{corollary}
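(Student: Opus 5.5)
The corollary has three parts, and I would prove them in order, each by a direct appeal to Lemma~\ref{lem:weak-directions}.

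\emph{Step 1: inequality \eqref{eq:classified-gradient}.} I would split the sum $\sum_i\Theta_iT_u^{ii}b_i^2$ according to the partition \eqref{def:good-bad-directions}.
\begin{itemize}
\item For $i\in I$, the definition gives $\Theta_i\ge\frac12$.
\item For $i\notin I$, the uniform bound \eqref{eq:Theta-uniform-lower} gives $\Theta_i\ge-\frac{n-4}{n}$.
\end{itemize}
Since $T_u^{ii}>0$ on $\Gamma_2$ and $b_i^2\ge0$, multiplying these bounds by $T_u^{ii}b_i^2$ and summing yields \eqref{eq:classified-gradient}. When $n=3$ the coefficient $-\frac{n-4}{n}$ is positive, which only strengthens the inequality.

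\emph{Step 2: inequality \eqref{eq:bad-carrier}.} At a point where $h_i^j$ is diagonal we have
\[
T_u^{ij}h_i^kh_{kj}=\sum_iT_u^{ii}\kappa_i^2 .
\]
I would drop the indices $i\in I$, whose terms are nonnegative. For $i\notin I$ we have $\Theta_i<\frac12$, so the contrapositive of \eqref{eq:Theta-central} gives $|\kappa_i|>H/8$, hence $\kappa_i^2>H^2/64$. Summing over $i\notin I$ gives \eqref{eq:bad-carrier}.

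\emph{Step 3: inequality \eqref{eq:classified-jacobi}.} I would substitute \eqref{eq:classified-gradient} into \eqref{eq:jacobi}. It then remains to compare the zeroth-order terms, $2H-\frac4H\ge H$. This holds exactly when $H^2\ge4$, which is guaranteed by the hypothesis $H\ge2$.

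None of these steps is a genuine obstacle, since everything reduces to Lemma~\ref{lem:weak-directions}. The only points needing care are the sign of $-\frac{n-4}{n}$ when $n=3$ (handled in Step 1) and the fact that \eqref{eq:bad-carrier} is not used in deriving \eqref{eq:classified-jacobi}. Its role is later, to absorb the negative gradient terms for $i\notin I$ using the curvature-square term.
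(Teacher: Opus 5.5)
Your proposal is correct and matches the paper's proof step for step. The paper obtains \eqref{eq:classified-gradient} from the definition of \(I\) together with \eqref{eq:Theta-uniform-lower}, obtains \eqref{eq:bad-carrier} from the contrapositive of \eqref{eq:Theta-central} in the principal frame, and obtains \eqref{eq:classified-jacobi} by inserting \eqref{eq:classified-gradient} into \eqref{eq:jacobi} and using \(2H-4/H\geq H\) for \(H\geq2\).
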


\begin{proof}
The estimate \eqref{eq:classified-gradient} follows directly from
\eqref{eq:Theta-uniform-lower} and the definition of \(I\). Moreover, by
\eqref{eq:Theta-central},
\begin{equation*}
 i\notin I
 \quad\Longrightarrow\quad
 |\kappa_i|>\frac H8.
\end{equation*}
Since $T_u^{ii}>0$, this gives
\begin{equation*}
 T_u^{ij}h_i^kh_{kj}=\sum_iT_u^{ii}\kappa_i^2
 \geq
 \sum_{i\notin I}T_u^{ii}\kappa_i^2
 \geq
 \frac{H^2}{64}\sum_{i\notin I}T_u^{ii},
\end{equation*}
which is \eqref{eq:bad-carrier}.  Finally,
$2H-4/H\geq H$ when $H\geq2$; hence
\eqref{eq:classified-jacobi} follows from \eqref{eq:jacobi} and
\eqref{eq:classified-gradient}.
\end{proof}

The classified Jacobi inequality
\eqref{eq:classified-jacobi} still contains the unfavorable
zeroth-order term \(-T_u^{ij}h_i^kh_{kj}\). To control this term, set
\begin{equation}
 G=\log H+\frac{n-1}{n-2}\log W.
 \label{pog:def-G}
\end{equation}
The coefficient \(\frac{n-1}{n-2}>1\) is chosen so that the coefficient
of \(T_u^{ij}h_i^kh_{kj}\) changes from \(-1\) to a positive constant.
\begin{lemma}\label{lem:angle-completed-jacobi}
Under the assumptions of Proposition~\ref{prop:jacobi}, with
\(I\) defined by
\eqref{def:good-bad-directions}, we have
\begin{equation}
 \Delta_{T_u}G
 \ge H+\frac{1}{n-2}T_u^{ij}h_i^kh_{kj}
 +\frac{n-2}{3n-5}\sum_{i\in I}T_u^{ii}G_i^2
 -c_n\sum_{i\notin I}T_u^{ii}G_i^2,
 \label{pog:angle-completed-jacobi}
\end{equation}
where
\begin{equation}
 c_n=
 \begin{cases}
  0,&n=3,4,\\[1mm]
  \dfrac{(n-4)(n-2)}{3n-4},&n\ge5.
 \end{cases}
 \label{pog:bad-loss}
\end{equation}
\end{lemma}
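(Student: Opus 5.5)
The plan is to compute \(\Delta_{T_u}\log W\) explicitly, add \(\frac{n-1}{n-2}\) times it to \eqref{eq:classified-jacobi}, and then absorb the gradient terms direction by direction by minimizing a one-variable quadratic. Throughout, set \(a=\frac{n-1}{n-2}\) and \(\omega=\log W\), so that \(G=b+a\omega\).

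\emph{Step 1: the Jacobi identity for \(\log W\).} Let \(w=\langle\nu,E_{n+1}\rangle=1/W\). Using \(\overline\nabla_{X_i}\nu=h_i^jX_j\) and the Gauss formula, I expect, in the principal frame,
\[
 w_i=h_{ik}\langle e_k,E_{n+1}\rangle,
 \qquad
 \nabla^2_{ij}w=h_{ijk}\langle e_k,E_{n+1}\rangle-h_i^kh_{kj}\,w.
\]
Contracting with \(T_u\) and using Codazzi together with the differentiated equation \(T_u^{ij}h_{ijk}=0\) from \eqref{eq:diff-eq} gives
\[
 \Delta_{T_u}w=-T_u^{ij}h_i^kh_{kj}\,w.
\]
Since \(\omega=-\log w\), this yields
\[
 \Delta_{T_u}\omega=T_u^{ij}h_i^kh_{kj}+\sum_iT_u^{ii}\omega_i^2.
\]
This is a positive curvature-square term plus a full positive gradient term.

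\emph{Step 2: add it to the classified inequality.} Adding \(a\) times the identity of Step~1 to \eqref{eq:classified-jacobi} gives
\[
 \Delta_{T_u}G\ge H+(a-1)\,T_u^{ij}h_i^kh_{kj}
 +\sum_{i\in I}T_u^{ii}\Bigl(\tfrac12b_i^2+a\omega_i^2\Bigr)
 +\sum_{i\notin I}T_u^{ii}\Bigl(-\tfrac{n-4}{n}b_i^2+a\omega_i^2\Bigr).
\]
Here \(a-1=\frac1{n-2}\). Because the frame is principal, \(T_u\) is diagonal, so the \(\omega_i^2\) terms split direction by direction in the same way as the \(b_i^2\) terms.

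\emph{Step 3: directionwise minimization.} Put \(x=a\omega_i\), so that \(b_i=G_i-x\) and \(a\omega_i^2=x^2/a\). For constants \(\alpha\) and \(\gamma\) with \(\gamma>0\) and \(\alpha+\gamma>0\), minimizing over \(x\) gives
\[
 \alpha(G_i-x)^2+\gamma x^2\ \ge\ \frac{\alpha\gamma}{\alpha+\gamma}\,G_i^2 .
\]
We always take \(\gamma=1/a=\frac{n-2}{n-1}\).
\begin{itemize}
\item For \(i\in I\), take \(\alpha=\tfrac12\). The constant is \(\frac{\alpha\gamma}{\alpha+\gamma}=\frac{n-2}{3n-5}\).
\item For \(i\notin I\), take \(\alpha=-\frac{n-4}{n}\). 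Then \(\alpha+\gamma=\frac{3n-4}{n(n-1)}>0\), so the bound still applies, and the constant is \(-\frac{(n-4)(n-2)}{3n-4}=-c_n\) for \(n\ge5\).
\item For \(n=3,4\), \(\alpha\ge0\), so the bad-direction terms are simply nonnegative, which gives \(c_n=0\).
\end{itemize}
Collecting the three steps yields \eqref{pog:angle-completed-jacobi}.

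The main point to verify carefully is Step~1. One must check the sign conventions (downward normal, \(h_{ij}=u_{ij}/W\), and \(\langle\nu,E_{n+1}\rangle=1/W>0\)). One must also confirm that the third-order term vanishes exactly after contraction with \(T_u\), using Codazzi and \(T_u^{ij}h_{ijk}=0\). The remainder is elementary algebra.
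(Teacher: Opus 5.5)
Your proposal is correct and follows essentially the same route as the paper. Both derive the identity $\Delta_{T_u}\log W=T_u^{ij}h_i^kh_{kj}+|\nabla\log W|_{T_u}^2$ from Codazzi and $T_u^{ij}\nabla_kh_{ij}=0$, add $\frac{n-1}{n-2}$ times it to \eqref{eq:classified-jacobi}, and then optimize the quadratic form in $(b_i,(\log W)_i)$ direction by direction, and your constants $\frac{n-2}{3n-5}$ and $c_n$ (including $\alpha+\gamma=\frac{3n-4}{n(n-1)}>0$) check out.
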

\begin{proof}
At the fixed
point \(X_u(p)\in\Sigma_u\), choose \(e_1,\ldots,e_n\) to be an orthonormal principal
frame of \(T_{X_u(p)}\Sigma_u\), and extend it locally as a geodesic frame. Hence
\[
 h_{ij}(p)=\kappa_i\delta_{ij},
 \qquad
 \nabla_{e_i}e_j(p)=0.
\]
Write the tangential-normal decomposition of the constant vector
\(E_{n+1}\) as
\begin{equation*}
 E_{n+1}=\sum_k \langle E_{n+1},e_k\rangle e_k+\langle E_{n+1},\nu\rangle\nu.
\end{equation*}
The Gauss and Weingarten equations give
\begin{equation}
 \nabla_i\langle E_{n+1},e_k\rangle=-\langle E_{n+1},\nu\rangle h_{ik},
 \qquad
 \nabla_i\langle E_{n+1},\nu\rangle=h_{ik}\langle E_{n+1},e_k\rangle.
 \label{pog:angle-first-derivatives}
\end{equation}
Using the Codazzi equations, we obtain
\[
 \nabla_i\nabla_j\langle E_{n+1},\nu\rangle
 =
 \langle E_{n+1},e_k\rangle\nabla_k h_{ij}
 -\langle E_{n+1},\nu\rangle h_i^k h_{kj}.
\]
It follows that
\begin{equation*}
 \Delta_{T_u}\langle E_{n+1},\nu\rangle
 =
\langle E_{n+1},e_k\rangle T_u^{ij}\nabla_kh_{ij}
 -\langle E_{n+1},\nu\rangle T_u^{ij}h_i^kh_{kj}.
\end{equation*}
Differentiating the equation
\(\sigma_2(\kappa[u])=1\) in the direction \(e_k\) yields $T_u^{ij}\nabla_kh_{ij}=0$. Consequently,
\begin{equation}
 0=\Delta_{T_u}\langle E_{n+1},\nu\rangle+\langle E_{n+1},\nu\rangle T_u^{ij}h_i^kh_{kj}.
 \label{pog:angle-jacobi}
\end{equation}
Since $ \langle E_{n+1},\nu\rangle=W^{-1}$, we have
\begin{align*}
\Delta_{T_u}\log W=
-\frac{\Delta_{T_u}\langle E_{n+1},\nu\rangle}{\langle E_{n+1},\nu\rangle}
+\frac{|\nabla\langle E_{n+1},\nu\rangle|_{T_u}^2}{\langle E_{n+1},\nu\rangle^2}=
T_u^{ij}h_i^kh_{kj}+|\nabla\log W|_{T_u}^2.
\end{align*}
Combining this identity with \eqref{eq:classified-jacobi}, we obtain
\begin{align}
 \Delta_{T_u}G
 \geq{}& H+\frac{1}{n-2}T_u^{ij}h_i^kh_{kj} \notag\\
 &+\sum_{i\in I}T_u^{ii}
 \left(\frac12b_i^2+\frac{n-1}{n-2}(\log W)_i^2\right) \notag\\
 &+\sum_{i\notin I}T_u^{ii}
 \left(-\frac{n-4}{n}b_i^2
       +\frac{n-1}{n-2}(\log W)_i^2\right).
 \label{eq:DeltaG-1}
\end{align}
Since
\[
 \frac12b_i^2+\frac{n-1}{n-2}(\log W)_i^2
 \geq\frac{n-2}{3n-5}
 \left(b_i+\frac{n-1}{n-2}(\log W)_i\right)^2
 =\frac{n-2}{3n-5}G_i^2,
\]
the directions in \(I\) give the desired positive term.  For
\(n\geq5\), the value of \(c_n\) in
\eqref{pog:bad-loss} is the smallest number for which
\[
 -\frac{n-4}{n}x^2+\frac{n-1}{n-2}y^2
 +c_n\left(x+\frac{n-1}{n-2}y\right)^2\geq0
\]
for every \(x,y\in\mathbb R\).  For \(n=3,4\), the same inequality
holds with \(c_n=0\).  Hence
\[
 -\frac{n-4}{n}b_i^2+\frac{n-1}{n-2}(\log W)_i^2
 \geq-c_n G_i^2.
\]
It follows from \eqref{eq:DeltaG-1} that \eqref{pog:angle-completed-jacobi} holds.
\end{proof}

\subsection{The two-surface Pogorelov estimate}\label{Subsec:The two-surface Pogorelov estimate}
In this subsection, we combine
Lemma~\ref{lem:angle-completed-jacobi} with a two-surface comparison
argument to prove a comparison Pogorelov estimate for the
\(\sigma_2\)-curvature equation.
\begin{theorem}\label{thm:comparison-pogorelov}
Let \(n\geq3\), let \(\Omega\subset\mathbb R^n\) be a bounded domain, and let
\[
 u\in C^4(\Omega)\cap C^2(\overline\Omega),
 \qquad
 v\in C^2(\overline\Omega).
\]
Suppose that
\begin{equation}
 \kappa[u],\kappa[v]\in\Gamma_2,
 \qquad
 \sigma_2(\kappa[u])=1,
 \qquad
 \rho:=v-u>0\quad\text{in }\Omega,
 \qquad
 \rho=0\quad\text{on }\partial\Omega,
 \label{pog:comparison-assumptions}
\end{equation}
and
\begin{equation}
 \lVert u\rVert_{C^1(\Omega)}
 +\lVert v\rVert_{C^1(\Omega)}\leq K.
 \label{pog:gradient-assumption}
\end{equation}
Then there are \(\beta=\beta(n,K)>2\) and
\(C=C(n,K)<\infty\) such that
\begin{equation}
 \sup_\Omega \rho^\beta H[u]\leq C.
 \label{pog:comparison-conclusion}
\end{equation}
\end{theorem}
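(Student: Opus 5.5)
We run a product-space maximum principle, replacing the vertical gap \(\rho\) by the same kind of two-surface distance used in Section~\ref{sec:separation}. For \(X'=X_u(x)\in\Sigma_u\) and \(Y'=X_v(y)\in\Sigma_v\) we consider
\[
 \Psi(x,y)=\beta\log|Y'-X'|+G(x),\qquad G=\log H+\tfrac{n-1}{n-2}\log W,
\]
on pairs with \(Y'\) above \(\Sigma_u\), and maximize over \(\overline\Omega\times\overline\Omega\).

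\textbf{Reduction to the two-surface functional.} By the argument of Lemma~\ref{lem:distance-gap}, the distance from \(X_v(y)\) to the hypograph of \(u\) is comparable, with constants depending on \(K\), to \(v(y)-u(y)\). Hence a bound on \(\max\Psi\) controls \(\rho^\beta H\) up to a factor \(C(K)^\beta\). We therefore aim to bound \(d^\beta H\) at paired points, that is, \(|Y-X|^\beta H(X)\).

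\textbf{Location of the maximum.} Near \(\partial\Omega\) we have \(\rho\to0\), so the distance tends to \(0\), while \(H\) stays bounded on \(\overline\Omega\) because \(u\in C^2(\overline\Omega)\). Thus the maximum \((X,Y)\) is interior. We may assume \(H(X)\ge2\) and \(H(X)\) large. For fixed \(Y\), maximality in \(X\) of \(\beta\log|Y-X'|+G(X')\) is different from the nearest-point situation, so the first variation in \(X_a\) gives
\[
 G_a=\beta\,\frac{\langle Y-X,X_a\rangle}{d^2}.
\]
Hence \(Y-X\) need not be normal, and its tangential part is tied to \(\nabla G\). We write \(Y-X=d\,\omega\) with \(\omega\) a unit vector. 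The first variation in \(Y_i\) then gives \(\langle\omega,Y_i\rangle=0\), so \(\omega\parallel N_v\). We use the rotation \(\mathcal R\) and the adapted frames \eqref{eq:adapted-local} to identify \(T_X\Sigma_u\) with \(T_Y\Sigma_v\).

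\textbf{Second variation.} We test the nonpositive Hessian of \(\Psi\) on the product with the coupled directions \((e_a,\mathcal R e_a)\) and contract with \(T_u\). The pure \(X\) block, the \(Y\) block and the mixed block combine, roughly, into
\[
 \frac{\beta}{d}\Bigl(T_u^{ab}h^u_{ab}\langle\omega,N_u\rangle-T_u(\cdot,\cdot)\text{ applied to }c\,h^v\Bigr)+\frac{\beta}{d^2}(\text{terms}\ \operatorname{tr}T_u(1-p^2)\ \text{and}\ -T_u(\omega^T,\omega^T)).
\]
The mixed G{\aa}rding inequality \eqref{eq:mixed-garding-prelim}, applied to \(T_u:h^v\ge2\sqrt{\sigma_2(h^v)}>0\), shows that the curvature of the upper surface enters with a favorable sign, since \(Y\) lies above and \(c>0\)-type orientation. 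The remaining pieces are bounded by \(C\beta H/d^2\) and \(C\beta H/d\). For these bounds we use \(\operatorname{tr}T_u=(n-1)H\), \(T_u^{ij}h_{ij}=2\), and \(|\omega^T|\le C(K)\), the last because \(\omega\parallel N_v\) and both normals are \(K\)-controlled.

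\textbf{Absorbing the errors with Lemma~\ref{lem:angle-completed-jacobi}.} This lemma supplies \(H+\frac1{n-2}T_u^{ij}h_i^kh_{kj}\ge cH^3\)-type positivity, since \(\sum_iT_u^{ii}\kappa_i^2\gtrsim H\kappa_1^2\gtrsim H^3/n^2\). The good gradient directions give \(\frac{n-2}{3n-5}\sum_{i\in I}T_u^{ii}G_i^2\) with \(G_i=\beta\langle\omega,e_i\rangle/d\). In the bad directions, where \(|\kappa_i|>H/8\), we absorb \(c_n\beta^2T_u^{ii}\langle\omega,e_i\rangle^2/d^2\) into the curvature-square term via \eqref{eq:bad-carrier}, exactly when \(d^2H^2\gtrsim C(n,K)\beta^2\), which holds if \(d^\beta H\) is large. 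The \(\beta/d^2\) cutoff errors are absorbed in the same way. Taking \(\beta>2\) large, depending on \(n\) and \(K\), makes the good gradient coefficient dominate the negative \(\beta/d^2\) cross terms, as in the standard Pogorelov choice. The result is \(H^3\le C\beta^2H/d^2\), which gives \(d^2H^2\le C\) and then \(d^\beta H\le C\) since \(d\le C(K)\).

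\textbf{Main obstacle.} The hardest step is the mixed second variation: the tangential component \(\omega^T\) produced by \(\nabla G\), together with the projection defect \(p_{11}=c\ne1\), generates \(\beta/d^2\)-sized terms without a sign. I would first try to choose the paired variation direction \(\mathcal R e_1\) scaled by \(c\) in the first direction, which minimizes the mixed term, and then control the leftover \((1-c^2)\beta T_u^{11}/d^2\) by the good gradient term in direction \(1\) when \(1\in I\), or by \(H^2T_u^{11}/64\) when \(1\notin I\). This is where the choice of \(\beta\) and the bound \(c\ge c(K)>0\) are essential.
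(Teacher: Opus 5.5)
\textbf{There is a genuine gap: the maximization step fails.} Your test function $\Psi(x,y)=\beta\log|X_v(y)-X_u(x)|+G(x)$ is maximized jointly in $(x,y)$. The separation quantity of Section~\ref{sec:separation}, however, is $d(y)=\min_x|X_v(y)-X_u(x)|$, an \emph{infimum} over the lower point. Your claim that ``the distance tends to $0$ near $\partial\Omega$'' holds only for nearest pairs. For independent $x,y$ the chord does not vanish off the diagonal. Hence $\sup\Psi\ge\beta\log(\tfrac12\operatorname{diam}\Omega)+\sup_\Omega G$, and bounding it would amount to a curvature bound up to $\partial\Omega$. The inequality $\rho(x)^\beta H(x)\le e^{\Psi(x,x)}$ is correct, but nothing bounds $\max\Psi$.

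Moreover, the maximum cannot be interior in the configuration you analyze. Since $G$ does not depend on $y$, at a critical point with $Y-X=-dN_v$ the pure $y$-block of the Hessian of $\Psi$ is $\beta d^{-2}(\delta_{ij}+d\,h^v_{ij})$. Its trace $\beta d^{-2}(n+dH_v)$ is positive. So the ``favorable sign'' of the upper curvature you found is exactly what rules out an interior maximum. In the opposite orientation $Y-X=dN_v$, the same term enters as $-\beta d^{-1}\sum_iT_u^{ii}h_v(\widetilde e_i,\widetilde e_i)$, which has the wrong sign and no a priori bound.

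Your absorption step also rests on a false inequality. When $\sigma_2=1$, $\sum_iT_u^{ii}\kappa_i^2=H-3\sigma_3$. For $\kappa\approx(N,\tfrac1{(n-1)N},\ldots,\tfrac1{(n-1)N})$ this is comparable to $H$, not $H^3$, since only $H-\kappa_1\ge1/H$ is available. Errors of size $\beta H/d^2$ therefore cannot be absorbed. Finally, $\langle N_u,N_v\rangle$ need not be positive when $K>1$; only $1+\langle N_u,N_v\rangle\ge2/(1+K^2)$ holds.

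\textbf{What the paper does instead.} The missing idea is a two-point quantity that can be maximized jointly, has the right monotonicity in each variable, and is dominated by the local vertical gap.
\begin{itemize}
\item The paper first picks $x_0$ maximizing the one-point quantity $\rho^\beta e^G$ and sets $r=\rho(x_0)$. It then maximizes $G(x)+\frac\beta2\log\eta$, with $\eta=v(y)-u(x)-\frac r2-\frac r{32}-\frac{8K^2}r|y-x|^2$, over $\{\frac r2<\rho(x)<2r,\ |y-x|<\frac r{2K}\}$.
\item The quadratic penalty gives $\eta\le\rho(x)-\frac r2$, which excludes every boundary piece. The piece $\rho=2r$ is excluded via the maximality of $x_0$.
\item At the maximum, $Dv(y_*)=\frac{16K^2}r(y_*-x_*)$. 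This kills all first derivatives in the $Y$ directions and gives $\mathrm d\eta(Z,\widetilde Z)=W_v\langle N_v,Z\rangle$.
\item In the rotated paired frame, the $y$-Hessian contributes $W_v\sum_iT_u^{ii}h_v(\widetilde e_i,\widetilde e_i)\ge0$ by G{\aa}rding. The $x$-Hessian contributes $-2W_v\langle N_u,N_v\rangle$ by $T_u^{ij}h_{ij}=2$. So the only zero-order error is $C\beta/\eta$, which $H$ absorbs once $H\eta$ is large.
\item The rotation defect satisfies $\frac{16K^2}r|\pi(\widetilde e_i-e_i)|^2\le\frac Cr\eta_i^2$. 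Because $\eta\le\frac32r$, this is of gradient type. It is handled by the good-direction gradient term, or by $|\eta_i|\le C(K)$ together with \eqref{eq:bad-carrier}.
\end{itemize}
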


\

For the remainder of this subsection, we work under the assumptions
of Theorem~\ref{thm:comparison-pogorelov}. By increasing \(K\), we may assume that \(K\geq1\). Recall
\[
 X_u(x)=(x,u(x)),\qquad X_v(y)=(y,v(y)),\qquad
 \Sigma_u=X_u(\Omega),\qquad \Sigma_v=X_v(\Omega).
\]
We first explain why a two-surface argument is needed. Let \(\beta>2\); its value will be fixed later in
terms of \(n\) and \(K\). Consider the standard
one-point test function
\[
\rho^\beta e^G
=\rho^\beta H[u]W^{\frac{n-1}{n-2}}
\]
on \(\overline\Omega\). Since \(u\in C^2(\overline\Omega)\), this
function is continuous on \(\overline\Omega\). It vanishes on
\(\partial\Omega\), while it is positive in \(\Omega\). Hence it
attains its maximum at some \(x_0\in\Omega\).

If we apply the maximum principle directly at \(x_0\), the calculation
produces the term
\[
T_u^{ij}\nabla_i\nabla_jv.
\]
Unlike in the Hessian setting, the concavity of \(\sigma_2^{1/2}\) does not directly control this term. In fact, the examples in
Remark~\ref{rem:intrinsic-one-point-obstruction} show that it can
have the wrong sign and arbitrarily large size.

We therefore introduce a two-surface test function. Set
\[
r:=\rho(x_0).
\]
Then
\begin{equation}\label{eq:r}
 0<r=\rho(x_0)
 \leq\lVert\rho\rVert_{L^\infty(\Omega)}
 \leq2K.
\end{equation}
Restrict \(x\) to the annular region
\[
\frac r2<\rho(x)<2r,
\]
which contains \(x_0\). Since \(\lvert D\rho\rvert\leq K\) and
\(\rho=0\) on \(\partial\Omega\), it follows that
\begin{equation}
 \operatorname{dist}(x,\partial\Omega)
 \geq\frac{\rho(x)}{K}
 >\frac{r}{2K}.
 \label{pog:distance-to-boundary}
\end{equation}
Thus the ball \(B_{r/(2K)}(x)\) is contained in \(\Omega\) at every
point in this annular region.

Starting from the diagonal pair \((x_0,x_0)\), we introduce a second
point \(y\) near \(x\). More precisely, define
\[
\mathcal D
=\left\{(x,y):
  \frac r2<\rho(x)<2r,
  \quad
  |y-x|<\frac{r}{2K}
 \right\}.
\]
By \eqref{pog:distance-to-boundary}, every such \(y\) lies in \(\Omega\). On \(\mathcal D\), define the two-surface auxiliary function
\begin{equation}\label{pog:def-eta}
\eta(x,y)=v(y)-u(x)-\frac r2
 -\frac r{32}-\frac{8K^2}{r}\lvert y-x\rvert^2.
\end{equation}
The quadratic term in \eqref{pog:def-eta} makes \(\eta\) negative
when \(\lvert y-x\rvert=r/(2K)\), so the positive component containing
\((x_0,x_0)\) remains inside \(\mathcal D\) (see Lemma \ref{lem:interior-contact}).
On the open set \(\mathcal D\cap\{\eta>0\}\), consider the
two-surface test function
\begin{equation}\label{def:P}
\mathcal P(x,y)=G(x)+\frac{\beta}{2}\log\eta(x,y).
\end{equation}
At the diagonal pair \((x_0,x_0)\),
\[
 \eta(x_0,x_0)=r-\frac r2-\frac r{32}=\frac{15r}{32}>0.
\]

Allowing \(x\) and \(y\) to vary independently lets us compute the
\(u\)-terms on the graph of \(u\) and the \(v\)-terms on the graph of
\(v\). In particular, \(v\) is no longer treated as a function on the
graph of \(u\). This avoids the uncontrolled one-point term, but it
also creates a geometric mismatch: the curvature tensors of the two
graphs act on different tangent spaces. We first show that \(\mathcal P\) attains its maximum
at an interior point.
\begin{lemma}\label{lem:interior-contact}
The function \(\mathcal P(x,y)\) defined in \eqref{def:P} attains its maximum at an interior point
\((x_*,y_*)\in\mathcal D\) with \(\eta(x_*,y_*)>0\).  Moreover,
\begin{equation}
 \eta(x,y)\leq \rho(x)-\frac r2\leq\frac{3r}{2},
 \label{pog:eta-comparison}
\end{equation}
and
\begin{equation}
 Dv(y_*)=\frac{16K^2}{r}(y_*-x_*),
 \qquad
 \lvert y_*-x_*\rvert
 \leq\frac{r}{16K}.
 \label{pog:y-stationarity}
\end{equation}
\end{lemma}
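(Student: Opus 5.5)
The plan is to get all three assertions from the Lipschitz bound on \(v\) and the defining maximality of \(x_0\), in three steps: the upper bound \eqref{pog:eta-comparison}, a boundary analysis of \(\mathcal D\cap\{\eta>0\}\), and first-order conditions in \(y\).

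\emph{Step 1: the bound \eqref{pog:eta-comparison}.} Put \(t=\lvert y-x\rvert\). Since \(\lvert Dv\rvert\le K\) and the segment from \(x\) to \(y\) lies in \(B_{r/(2K)}(x)\subset\Omega\) by \eqref{pog:distance-to-boundary}, we have \(v(y)\le v(x)+Kt\). Hence
\[
 \eta(x,y)\le \rho(x)-\frac r2-\frac r{32}+Kt-\frac{8K^2}{r}t^2 .
\]
The quadratic \(Kt-8K^2t^2/r\) has maximum \(r/32\), attained at \(t=r/(16K)\). This gives \(\eta\le\rho(x)-r/2\), and then \(\eta<3r/2\) on \(\mathcal D\) because \(\rho(x)<2r\).

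\emph{Step 2: the maximum is interior.} The set \(\mathcal D\cap\{\eta>0\}\) contains \((x_0,x_0)\), where \(\eta=15r/32\). Moreover \(G\) is continuous on \(\overline\Omega\), since \(H>0\) on \(\Gamma_2\) and \(u\in C^2(\overline\Omega)\). So \(\mathcal P\) is bounded above, and we only need to exclude maximizing sequences that approach \(\partial(\mathcal D\cap\{\eta>0\})\). There are four kinds of boundary points.
\begin{itemize}
\item Where \(\eta\to0\), \(\mathcal P\to-\infty\).
\item Where \(\rho(x)=r/2\), Step 1 gives \(\eta\le0\).
\item Where \(t=r/(2K)\), the quadratic equals \(r/2-2r=-3r/2\). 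Then \(\eta\le\rho(x)-r/2-r/32-3r/2<0\), because \(\rho<2r\).
\item Where \(\rho(x)=2r\), \(\eta\) may stay positive. This is the only genuine case and the key step.
\end{itemize}
For the last case we use the maximality of \(\rho^\beta e^G\) at \(x_0\): it gives \(e^{G(x)}\le e^{G(x_0)}(r/\rho(x))^\beta\), which tends to \(2^{-\beta}e^{G(x_0)}\). Combined with \(\eta\le3r/2\), this gives
\[
 \limsup e^{\mathcal P}\le e^{G(x_0)}\,2^{-\beta}\Bigl(\frac{3r}{2}\Bigr)^{\beta/2}
 =e^{G(x_0)}\Bigl(\frac{12r}{32}\Bigr)^{\beta/2}.
\]
This is strictly less than \(e^{\mathcal P(x_0,x_0)}=e^{G(x_0)}(15r/32)^{\beta/2}\). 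So near this part of the boundary \(\mathcal P\) stays strictly below its value at \((x_0,x_0)\). By compactness of \(\overline{\mathcal D}\) (\(\Omega\) is bounded) and continuity of \(\mathcal P\) on \(\mathcal D\cap\{\eta>0\}\), a maximum point \((x_*,y_*)\) exists in the interior, with \(\eta(x_*,y_*)>0\).

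\emph{Step 3: stationarity \eqref{pog:y-stationarity}.} \(G\) depends only on \(x\) and \(\eta>0\) at \((x_*,y_*)\), so \(D_y\mathcal P=0\) there reduces to \(D_y\eta=0\). This reads \(Dv(y_*)-\frac{16K^2}{r}(y_*-x_*)=0\). Taking norms, \(\lvert y_*-x_*\rvert=\frac{r}{16K^2}\lvert Dv(y_*)\rvert\le\frac r{16K}\).
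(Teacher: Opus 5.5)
Your proposal is correct and follows the paper's proof essentially step for step. It uses the same Lipschitz/Young bound \(\eta\le\rho(x)-r/2\), the same boundary analysis in which only \(\rho(x)=2r\) needs the maximality of \(\rho^\beta e^G\) at \(x_0\) (giving the ratio \((12/15)^{\beta/2}=(4/5)^{\beta/2}<1\)), and the same \(y\)-stationarity; the one cosmetic difference is that you argue with maximizing sequences, whereas the paper maximizes the continuous extension \(e^{G}(\max\{\eta,0\})^{\beta/2}\) over the compact closure of \(\mathcal D\) (where \(y\in\partial\Omega\) is absorbed into the case \(\lvert y-x\rvert=r/(2K)\), as your enumeration of \(\partial\mathcal D\) implicitly does).
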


\begin{proof}
For \((x,y)\in\mathcal D\), the segment from \(x\) to \(y\) lies in
the ball by \eqref{pog:distance-to-boundary}. We may therefore
integrate the gradient bound for \(v\) along this segment. Using
Young's inequality, we obtain
\[
 \eta(x,y)
 \leq \rho(x)+K\lvert y-x\rvert-\frac r2
 -\frac r{32}-\frac{8K^2}{r}\lvert y-x\rvert^2
 \leq \rho(x)-\frac r2.
\]
This proves \eqref{pog:eta-comparison}; by continuity, the same estimate
holds on \(\overline{\mathcal D}\).

The closure of \(\mathcal D\) is compact in
\(\overline\Omega\times\overline\Omega\). Hence the continuous function
\[
 (x,y)\longmapsto
 e^{G(x)}\bigl(\max\{\eta(x,y),0\}\bigr)^{\beta/2}
\]
attains a maximum there, and its value is positive at \((x_0,x_0)\).
We now show that no boundary point can be a maximizer.

If \(\rho(x)=r/2\), then
\eqref{pog:eta-comparison} gives \(\eta(x,y)\leq0\).
Hence this part of the boundary contains no maximizer.

On the part where
\(\lvert y-x\rvert=r/(2K)\), the quadratic term in
\eqref{pog:def-eta} equals \(2r\).
Since \(\rho(x)\leq2r\) on the closure, the same estimate as
above gives \(\eta(x,y)\leq-r/32<0\).
If \(y\in\partial\Omega\), then
\(r/(2K)\leq\operatorname{dist}(x,\partial\Omega)\leq|y-x|\),
so this case is included in the boundary part just considered.

For $\rho(x)=2r$, the choice of \(x_0\) gives
\[
 e^{G(x)}\leq2^{-\beta}e^{G(x_0)}.
\]
Together with \eqref{pog:eta-comparison}, this yields
\[
 \frac{e^{G(x)}\eta(x,y)^{\beta/2}}
      {e^{G(x_0)}\eta(x_0,x_0)^{\beta/2}}
 \leq2^{-\beta}\left(\frac{16}{5}\right)^{\beta/2}
 =\left(\frac45\right)^{\beta/2}<1.
\]
Thus no point with \(\rho(x)=2r\) can be a maximizer.

Therefore, its maximum lies in the interior of
\(\{\eta>0\}\cap\mathcal D\).  At the maximum point,
differentiation with respect to \(y\) gives the first
identity in \eqref{pog:y-stationarity}.  The second follows from
\(\lvert Dv\rvert\leq K\).
\end{proof}

\

We now address the tangent-space mismatch described above. Let \((x_*,y_*)\) be the maximizing pair from
Lemma~\ref{lem:interior-contact}, and write
\[
 X_*:=X_u(x_*),\qquad Y_*:=X_v(y_*).
\]
We regard \(\eta\) as a function on \(\Sigma_u\times\Sigma_v\)
through the product immersion \(X_u\times X_v\). To compare the curvature contributions from the
two factors, we pair their tangent spaces using the canonical
rotation from the proof of
Proposition~\ref{prop:distance-jacobi}. The following lemma records
the resulting product-Hessian estimate needed in the
maximum-principle argument for \(\mathcal P\).

\begin{lemma}\label{lem:rotated-product-hessian}
For any orthonormal principal frame \(\{e_i\}\) at \(X_*\), there
is an orthonormal frame \(\{\widetilde e_i\}\) at \(Y_*\) such
that, with
\[
 \eta_i:=\mathrm d\eta(e_i,\widetilde e_i),
\]
we have
\begin{equation}
 \begin{aligned}
 \sum_iT_u^{ii}
 \operatorname{Hess}_{\Sigma_u\times\Sigma_v}\eta
 \bigl((e_i,\widetilde e_i),(e_i,\widetilde e_i)\bigr)\geq-C(n,K)
 -\frac{C(n,K)}r\sum_iT_u^{ii}\eta_i^2,
 \qquad |\eta_i|\leq C(K).
 \end{aligned}
 \label{pog:product-hessian-estimate}
\end{equation}
\end{lemma}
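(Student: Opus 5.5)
The proof rests on one choice: take \(\widetilde e_i:=\mathcal R e_i\), where \(\mathcal R:T_{X_*}\Sigma_u\to T_{Y_*}\Sigma_v\) is the canonical rotation from the proof of Proposition~\ref{prop:distance-jacobi}. The plan is then to compute the product Hessian of \(\eta\) along \((e_i,\widetilde e_i)\) exactly, using the stationarity relation \eqref{pog:y-stationarity} to simplify the coefficients. Let \(\pi\) denote the horizontal projection \(\mathbb R^{n+1}\to\mathbb R^n\), let \(e_{n+1}=-E_{n+1}\) be the upward unit vector, and set \(c=\langle N_u,N_v\rangle\). We regard
\[
 \eta=\langle Y,e_{n+1}\rangle-\langle X,e_{n+1}\rangle-\tfrac{17r}{32}-\tfrac{8K^2}{r}|\pi Y-\pi X|^2 .
\]
Since \(\mathcal R\) is an isometry, \(\{\widetilde e_i\}\) is orthonormal. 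Write \(w_i:=\widetilde e_i-e_i=\alpha_i(N_u+N_v)\) with \(\alpha_i=-\langle N_v,e_i\rangle/(1+c)\).

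\emph{First-order terms.} The height and quadratic terms split as \(\langle w_i,e_{n+1}\rangle\) and \(\langle y_*-x_*,\pi w_i\rangle\), so
\[
 \eta_i=\langle w_i,e_{n+1}\rangle-\tfrac{16K^2}{r}\langle y_*-x_*,\pi w_i\rangle .
\]
By \eqref{pog:y-stationarity}, \(\tfrac{16K^2}{r}(y_*-x_*)=Dv(y_*)\). Using \(\pi N=Dw/W\) and \(\langle N,e_{n+1}\rangle=-1/W\) for each graph, this gives
\[
 \eta_i=-\alpha_i\Bigl[W_v+\frac{1+Du\cdot Dv}{W_u}\Bigr]=-\alpha_iW_v(1+c)=W_v\langle N_v,e_i\rangle ,
\]
because \(c=(1+Du\cdot Dv)/(W_uW_v)\). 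In particular \(|\eta_i|\le W_v\le C(K)\), which is the second claim. The same identity gives
\[
 1+c=\frac{W_uW_v+1+Du\cdot Dv}{W_uW_v}\ge\frac{1}{W_uW_v}\ge\frac1{1+K^2}.
\]
Hence \(|\pi w_i|^2\le|w_i|^2=2\langle N_v,e_i\rangle^2/(1+c)\le C(K)\eta_i^2\).

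\emph{Second-order terms.} The frames are geodesic, so \(\overline\nabla_{e_i}e_i=-h^u_{ii}N_u\) and \(\overline\nabla_{\widetilde e_i}\widetilde e_i=-h^v(\widetilde e_i,\widetilde e_i)N_v\). The Hessian along \((e_i,\widetilde e_i)\) then has three pieces.
\begin{itemize}
\item The \(v\)-piece is \(h^v(\widetilde e_i,\widetilde e_i)\bigl[1/W_v+\langle Dv,Dv\rangle/W_v\bigr]=W_vh^v(\widetilde e_i,\widetilde e_i)\). Here stationarity is used again: the gradient of the quadratic penalty in \(y\) is exactly \(Dv\).
\item The \(u\)-piece is \(h^u_{ii}\bigl[-1/W_u-\langle Dv,Du\rangle/W_u\bigr]\), whose coefficient is bounded by \(C(K)\).
\item The mixed piece is \(-\tfrac{16K^2}{r}|\pi w_i|^2\).
\end{itemize}
Contracting with \(T_u^{ii}\) and using \(T_u^{ii}h^u_{ii}=2\), the \(u\)-piece contributes at least \(-C(K)\). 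The mixed piece contributes at least \(-\tfrac{C(n,K)}{r}\sum_iT_u^{ii}\eta_i^2\) by the bound on \(|\pi w_i|\).

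\emph{The \(v\)-piece, via the mixed G\aa rding inequality.} Define \(B_{ij}:=h^v(\widetilde e_i,\widetilde e_j)\). This is the second fundamental form of \(\Sigma_v\) in an orthonormal frame, so \(B\in\Gamma_2\). Since \(T_u\) is diagonal in the principal frame \(\{e_i\}\), we have \(\sum_iT_u^{ii}B_{ii}=T_1(h^u):B\). By \eqref{eq:mixed-garding-prelim} this is \(\ge2\sqrt{\sigma_2(h^u)\sigma_2(B)}\ge0\). Multiplying by \(W_v>0\) keeps it nonnegative, so the \(v\)-piece, which has no a priori bound, can simply be discarded. Summing the three contributions gives \eqref{pog:product-hessian-estimate}.

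\emph{Main obstacle.} The delicate step is the bookkeeping in the first-order and mixed terms. Two facts must hold there: the unbounded curvature \(h^v\) must appear only with the positive coefficient \(W_v\), and the horizontal mismatch \(|\pi w_i|^2\) must be controlled by \(\eta_i^2\) through the identity \(\eta_i=W_v\langle N_v,e_i\rangle\) together with the lower bound for \(1+c\). Both depend on substituting \(\tfrac{16K^2}{r}(y_*-x_*)=Dv(y_*)\) and on the specific rotation \(\mathcal R\), whose displacement \(w_i\) is parallel to \(N_u+N_v\). I would check the signs of these two computations first.
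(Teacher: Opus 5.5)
Your proposal is correct and follows the paper's proof essentially step for step. It makes the same choice $\widetilde e_i=\mathcal R e_i$ and derives the same identity $\eta_i=W_v\langle N_v,e_i\rangle$ from stationarity. It also bounds $|\pi(\widetilde e_i-e_i)|^2\le C(K)\eta_i^2$ through the rank-one displacement along $N_u+N_v$, and discards the $W_v h^v$ term with the mixed G{\aa}rding inequality. The differences are only cosmetic: you check the coefficients in graph coordinates and bound $1+c\ge (W_uW_v)^{-1}$ algebraically, whereas the paper packages both steps into the vector identity $E_{n+1}+\frac{16K^2}{r}(y_*-x_*,0)=W_vN_v$ and the inequality $|N_u+N_v|^2\ge\langle N_u+N_v,E_{n+1}\rangle^2$.
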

\begin{proof}
All calculations below are made at \((X_*,Y_*)\). Let \(\pi\)
be the horizontal projection and let
\(W_v=(1+|Dv(y_*)|^2)^{1/2}\). With the downward vector
\(E_{n+1}=(0,\ldots,0,-1)\), formula \eqref{pog:def-eta} becomes
\begin{equation}
 \eta(X,Y)
 =
 \langle-E_{n+1},Y-X\rangle
 -\frac r2-\frac r{32}
 -\frac{8K^2}{r}\bigl|\pi(Y-X)\bigr|^2.
 \label{pog:eta-on-product}
\end{equation}

By \eqref{pog:y-stationarity},
\begin{equation}
 E_{n+1}+\frac{16K^2}{r}(y_*-x_*,0)
 =(Dv(y_*),-1)=W_vN_v.
 \label{pog:stationary-vector}
\end{equation}
Differentiating \eqref{pog:eta-on-product} in arbitrary tangent
directions \(Z\in T_{X_*}\Sigma_u\) and
\(\widetilde Z\in T_{Y_*}\Sigma_v\) therefore gives
\begin{equation}
 \begin{aligned}
 \mathrm d\eta(Z,\widetilde Z)
 &=\left\langle
 -E_{n+1}-\frac{16K^2}{r}(y_*-x_*,0),\widetilde Z-Z
 \right\rangle\\
 &=W_v\langle N_v,Z\rangle
   -W_v\langle N_v,\widetilde Z\rangle
 =W_v\langle N_v,Z\rangle.
 \end{aligned}
 \label{pog:product-first-derivative}
\end{equation}

We use the isometry from the proof of
Proposition~\ref{prop:distance-jacobi}, applied at \((X_*,Y_*)\):
\begin{equation}
 \mathcal RZ
 =Z-\frac{\langle N_v,Z\rangle}
 {1+\langle N_u,N_v\rangle}(N_u+N_v).
 \label{pog:rotation}
\end{equation}
It maps \(T_{X_*}\Sigma_u\) isometrically onto
\(T_{Y_*}\Sigma_v\) and fixes their common tangent space.
The denominator has a uniform positive lower bound. Indeed,
\[
 \begin{aligned}
 2\bigl(1+\langle N_u,N_v\rangle\bigr)
 &=|N_u+N_v|^2\\
 &\geq\langle N_u+N_v,E_{n+1}\rangle^2
 \geq\frac4{1+K^2}.
 \end{aligned}
\]
Hence
\begin{equation}
 1+\langle N_u,N_v\rangle\geq\frac2{1+K^2}.
 \label{pog:angle-bound}
\end{equation}
For the given principal frame at \(X_*\), choose
\begin{equation}
 \widetilde e_i:=\mathcal Re_i
 \qquad\text{at }Y_*.
 \label{pog:aligned-frame}
\end{equation}
Extend the two frames independently as local orthonormal tangent
frames satisfying
\[
 \nabla^{\Sigma_u}_{e_j}e_i=0\quad\text{at }X_*,
 \qquad
 \nabla^{\Sigma_v}_{\widetilde e_j}\widetilde e_i=0
 \quad\text{at }Y_*.
\]
The frame \(\{e_i\}\) is principal at \(X_*\). In the frame
\(\{\widetilde e_i\}\), we write
\(h_i^j[v]=h_v(\widetilde e_i,\widetilde e_j)\); this matrix need not
be diagonal.

By \eqref{pog:product-first-derivative},
\begin{equation}
 \eta_i=W_v\langle N_v,e_i\rangle.
 \label{pog:paired-first-derivative}
\end{equation}
Combining \eqref{pog:rotation} and
\eqref{pog:paired-first-derivative}, we obtain
\begin{equation}
 \widetilde e_i-e_i
 =
 -\frac{\eta_i}
 {W_v\bigl(1+\langle N_u,N_v\rangle\bigr)}
 (N_u+N_v).
 \label{pog:rank-one-identity}
\end{equation}
Consequently,
\begin{align}
 \bigl|\pi(\widetilde e_i-e_i)\bigr|^2
 &\leq
 \bigl|\widetilde e_i-e_i\bigr|^2 \notag\\
 &=
 \frac{\eta_i^2
       |N_u+N_v|^2}
 {W_v^2\bigl(1+\langle N_u,N_v\rangle\bigr)^2} \notag\\
 &=
 \frac{2\eta_i^2}
 {W_v^2\bigl(1+\langle N_u,N_v\rangle\bigr)}
 \leq C(K)\eta_i^2,
 \label{pog:rank-one-bound}
\end{align}
where we have used \eqref{pog:angle-bound}.

We next differentiate the original expression
\eqref{pog:eta-on-product} twice in the paired directions.
The intrinsic connection terms vanish at the chosen points, so
\begin{equation}
 \begin{aligned}
 &\operatorname{Hess}_{\Sigma_u\times\Sigma_v}\eta
 \bigl((e_i,\widetilde e_i),(e_i,\widetilde e_i)\bigr)\\
 &\quad=
 \left\langle
 -E_{n+1}-\frac{16K^2}{r}(y_*-x_*,0),
 \overline\nabla_{\widetilde e_i}\widetilde e_i
 -\overline\nabla_{e_i}e_i
 \right\rangle
 -\frac{16K^2}{r}|\pi(\widetilde e_i-e_i)|^2.
 \end{aligned}
 \label{pog:paired-geodesic-second-derivative}
\end{equation}
By the Gauss formula,
\[
 \overline\nabla_{e_i}e_i=-\kappa_i[u]N_u,
 \qquad
 \overline\nabla_{\widetilde e_i}\widetilde e_i=-h_i^i[v]N_v.
\]
Substitution, followed by \eqref{pog:stationary-vector}, yields
\begin{equation}
 \begin{aligned}
 &\operatorname{Hess}_{\Sigma_u\times\Sigma_v}\eta
 \bigl((e_i,\widetilde e_i),(e_i,\widetilde e_i)\bigr)\\
 &\quad=
 W_v\bigl(h_i^i[v]-\langle N_u,N_v\rangle\kappa_i[u]\bigr)
 -\frac{16K^2}{r}|\pi(\widetilde e_i-e_i)|^2.
 \end{aligned}
 \label{pog:product-hessian-identity}
\end{equation}

It remains to take the \(T_u\)-trace.  The matrix \((h_i^j[v])\) is symmetric and has eigenvalues \(\kappa[v]\). Thus, the mixed
G{\aa}rding inequality gives
\begin{equation}
 \sum_iT_u^{ii}h_i^i[v]
 =
 (T_u)_i^jh_j^i[v]
 \geq
 2\sqrt{
 \sigma_2(\kappa[u])\sigma_2(\kappa[v])
 }
 >0.
 \label{pog:mixed-garding}
\end{equation}
Using also \(\sum_iT_u^{ii}\kappa_i[u]=2\), we conclude from
\eqref{pog:product-hessian-identity} and \eqref{pog:rank-one-bound}
that
\[
 \begin{aligned}
 &\sum_iT_u^{ii}
 \operatorname{Hess}_{\Sigma_u\times\Sigma_v}\eta
 \bigl((e_i,\widetilde e_i),(e_i,\widetilde e_i)\bigr)\\
 &\quad\geq
 -2W_v\langle N_u,N_v\rangle
 -\frac{16K^2}{r}\sum_iT_u^{ii}|\pi(\widetilde e_i-e_i)|^2\\
 &\quad\geq-C(n,K)-\frac{C(n,K)}r\sum_iT_u^{ii}\eta_i^2.
 \end{aligned}
\]
Finally, \eqref{pog:paired-first-derivative} gives
\[
 |\eta_i|\leq W_v\leq\sqrt{1+K^2}.
\]
This proves \eqref{pog:product-hessian-estimate}.
\end{proof}

Having located an interior maximizing point and established the
product-Hessian estimate, we now apply the maximum principle to
\(\mathcal P\).

\begin{proof}[Proof of Theorem~\ref{thm:comparison-pogorelov}]
Let \((x_*,y_*)\) be the interior maximum point of $\mathcal P$ in
$\mathcal{D}$. In the estimates below,
\(C\) denotes a positive constant depending only on \(n\) and \(K\),
independent of \(\beta\).

At the maximum of \(\mathcal P\), differentiation in the paired directions
gives
\begin{equation}
 G_i+\frac\beta{2\eta}\eta_i=0.
 \label{pog:first-variation}
\end{equation}
The second-derivative on the product manifold,
together with Lemma~\ref{lem:rotated-product-hessian}, gives
\begin{align}
 0
 &\geq\Delta_{T_u}G
 +\frac\beta{2\eta}
  \sum_iT_u^{ii}\operatorname{Hess}_{\Sigma_u\times\Sigma_v}\eta
  \bigl((e_i,\widetilde e_i),(e_i,\widetilde e_i)\bigr)
 -\frac\beta{2\eta^2}\sum_iT_u^{ii}\eta_i^2\notag\\
 &\geq\Delta_{T_u}G
 -\frac{C\beta}{\eta}
 -\frac{C\beta}{r\eta}\sum_iT_u^{ii}\eta_i^2
 -\frac\beta{2\eta^2}\sum_iT_u^{ii}\eta_i^2.
 \label{pog:raw-maximum-inequality}
\end{align}
Since \(\eta\leq3r/2\), the last two terms are bounded below by
\[
 -\frac{C\beta}{\eta^2}\sum_iT_u^{ii}\eta_i^2.
\]
If \(H(x_*)<2\), then \eqref{eq:r} and \eqref{pog:eta-comparison} give
\[
 H(x_*)\eta(x_*,y_*)\leq6K.
\]
Thus the desired contact bound already holds. We may therefore
assume \(H(x_*)\geq2\). Substituting
Lemma~\ref{lem:angle-completed-jacobi} and
\eqref{pog:first-variation} into
\eqref{pog:raw-maximum-inequality}, we obtain
\begin{align}
 0\geq H+\frac{1}{n-2}T_u^{ij}h_i^kh_{kj}-\frac{C\beta}{\eta}+\left(\frac{n-2}{4(3n-5)}\beta^2-C\beta\right)
   \frac1{\eta^2}
   \sum_{i\in I}T_u^{ii}\eta_i^2-\frac{C(\beta^2+\beta)}{\eta^2}
   \sum_{i\notin I}T_u^{ii}\eta_i^2.
 \label{pog:master-inequality}
\end{align}
Fix \(\beta=\beta(n,K)>2\) sufficiently large such that
\begin{equation}
 \frac{n-2}{4(3n-5)}\beta^2-C\beta\geq0.
 \label{pog:choice-beta}
\end{equation}
The good-direction term is therefore nonnegative.  By the
pointwise bound for \(\eta_i\) in \eqref{pog:product-hessian-estimate} and the estimate
\eqref{eq:bad-carrier},
\[
 \sum_{i\notin I}T_u^{ii}\eta_i^2
 \leq C(K)\sum_{i\notin I}T_u^{ii}
 \leq\frac{C(K)}{H^2}T_u^{ij}h_i^kh_{kj}.
\]
It follows that
\begin{equation}
 0\geq H+
 \left(
  \frac1{n-2}
  -\frac{C(n,K)(\beta^2+\beta)}{(H\eta)^2}
 \right)T_u^{ij}h_i^kh_{kj}
 -\frac{C(n,K)\beta}{\eta}.
 \label{pog:final-dichotomy}
\end{equation}
If \(H\eta\) exceeds a sufficiently large constant depending only on
\(n\) and \(K\), then the coefficient of
\(T_u^{ij}h_i^kh_{kj}\) in \eqref{pog:final-dichotomy} is at least
\(1/[2(n-2)]\), and \(C\beta/\eta\leq H/2\).  Equation
\eqref{pog:final-dichotomy} would then give
\[
 0\geq\frac H2+\frac{1}{2(n-2)}T_u^{ij}h_i^kh_{kj}>0,
\]
a contradiction. Therefore, we have proved
\begin{equation}
 H(x_*)\eta(x_*,y_*)\leq C(n,K).
 \label{pog:contact-bound}
\end{equation}

We finally return from $(x_*,y_*)$ to $x_0$. By
the maximality of \(\mathcal P\) and
\eqref{pog:contact-bound},
\begin{align*}
 e^{G(x_0)}\left(\frac{15r}{32}\right)^{\beta/2}
 &=e^{G(x_0)}\eta(x_0,x_0)^{\beta/2}\\
 &\leq e^{G(x_*)}\eta(x_*,y_*)^{\beta/2}\\
 &=\bigl(H\eta\bigr)(x_*,y_*)
   W(x_*)^{\frac{n-1}{n-2}}
   \eta(x_*,y_*)^{\beta/2-1}\\
 &\leq C(n,K)r^{\beta/2-1}.
\end{align*}
Hence
\[
 r e^{G(x_0)}\leq C(n,K).
\]
Combining the fact that \(x_0\) maximizes \(\rho^\beta e^G\) with
\eqref{eq:r}, we obtain
\[
\sup_\Omega \rho^\beta H[u]
\leq \sup_\Omega \rho^\beta e^G
= r^\beta e^{G(x_0)}
\leq C(n,K)r^{\beta-1}
\leq C(n,K).
\]
\end{proof}

\subsection{Completion of the interior estimate}\label{subsec:main-proof}

We now combine the uniform separation from
Theorem~\ref{thm:separation} with the comparison Pogorelov estimate
from Theorem~\ref{thm:comparison-pogorelov}. To apply the latter on an
interior level set of \(v-u\), we also need an upper bound for this gap
near \(\partial B_1\). We obtain such a bound by a barrier argument,
in place of the Poisson-kernel argument used by Li and Wu~\cite{LiWu}.
\begin{lemma}\label{lem:boundary-decay}
Let \(B_1\subset\mathbb R^n\), and suppose that
\[
 u,v\in C^2(B_1)\cap C^1(\overline{B_1}),
 \qquad
 \kappa[v]\in\Gamma_2,
\]
with
\[
 v\geq u\quad\text{in }B_1,
 \qquad
 v=u\quad\text{on }\partial B_1,
 \qquad
 \lVert Du\rVert_{L^\infty(B_1)}\leq K.
\]
Set
\[
 \tilde{d}(x):=\operatorname{dist}(x,\partial B_1)=1-|x|.
\]
Then
\begin{equation}
 0\leq v(x)-u(x)
 \leq (1+K)\left(\sqrt{2\,\tilde{d}(x)}+\tilde{d}(x)\right)
 \qquad\text{for every }x\in B_1.
 \label{main:boundary-decay}
\end{equation}
\end{lemma}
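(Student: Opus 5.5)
The plan is a barrier argument that uses only one consequence of admissibility: \(\kappa[v]\in\Gamma_2\) forces \(H_v>0\), so \(v\) is a strict subsolution of the minimal surface operator. Write
\[
 \mathcal Q(w):=\operatorname{div}\!\left(\frac{Dw}{\sqrt{1+|Dw|^2}}\right).
\]
With the downward-normal convention, \(\mathcal Q(v)=H_v>0\) in \(B_1\), as already used in the proof of Lemma~\ref{lem:seed}. Affine functions \(L\) satisfy \(\mathcal Q(L)=0\), so they serve as upper barriers. The upper bound in \eqref{main:boundary-decay} will come from comparing \(v\) with a family of affine functions that lie above the boundary data \(u|_{\partial B_1}\). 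The lower bound \(v-u\ge 0\) is part of the hypotheses.

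\emph{Step 1 (comparison).} I first check that if \(L\) is affine and \(L\ge u=v\) on \(\partial B_1\), then \(v\le L\) in \(B_1\). Since \(v\in C^2(B_1)\cap C^1(\overline{B_1})\), the difference \(w=v-L\) is continuous on \(\overline{B_1}\). The mean-value form of the divergence-structure operator \(\mathcal Q(v)-\mathcal Q(L)=\partial_i(a^{ij}(x)w_j)\) has a locally uniformly elliptic coefficient matrix \(a^{ij}\), and this quantity is \(>0\). At an interior positive maximum of \(w\) we have \(Dv=DL\) and \(D^2w\le 0\). Expanding \(\mathcal Q(v)\) at that point gives \(\mathcal Q(v)=\frac{1}{W}\bigl(\delta_{ij}-\frac{v_iv_j}{W^2}\bigr)v_{ij}\le 0\), because \(D^2L=0\). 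This contradicts \(H_v>0\), so \(w\le\max_{\partial B_1}w\le 0\).

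\emph{Step 2 (affine majorants of the boundary data).} Fix \(x\in B_1\setminus\{0\}\) and put \(x_0=x/|x|\in\partial B_1\). For \(\xi\in\partial B_1\) we have \(|\xi-x_0|=\sqrt{2(1-\xi\cdot x_0)}\). By the gradient bound and the elementary inequality \(\sqrt{2t}\le \frac{\varepsilon}{2}+\frac{t}{\varepsilon}\), valid for \(t\ge0\) and \(\varepsilon>0\),
\[
 u(\xi)\le u(x_0)+K|\xi-x_0|
 \le u(x_0)+K\Bigl(\frac{\varepsilon}{2}+\frac{1-\xi\cdot x_0}{\varepsilon}\Bigr)=:L_\varepsilon(\xi).
\]
Here I use that \(u\in C^1(\overline{B_1})\) with \(|Du|\le K\) on the convex ball, so that \(|u(\xi)-u(x_0)|\le K|\xi-x_0|\). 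The right-hand side \(L_\varepsilon\) is affine in \(\xi\), so Step 1 gives \(v\le L_\varepsilon\) in \(B_1\).

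\emph{Step 3 (evaluation and optimization).} At the point \(x\) we have \(1-x\cdot x_0=1-|x|=\tilde d(x)\). Also \(u(x_0)\le u(x)+K|x-x_0|=u(x)+K\tilde d(x)\). Hence
\[
 v(x)-u(x)\le K\Bigl(\frac{\varepsilon}{2}+\frac{\tilde d}{\varepsilon}\Bigr)+K\tilde d .
\]
Choosing \(\varepsilon=\sqrt{2\tilde d}\) gives \(\frac{\varepsilon}{2}+\frac{\tilde d}{\varepsilon}=\sqrt{2\tilde d}\). Therefore \(v-u\le K(\sqrt{2\tilde d}+\tilde d)\), which is stronger than \eqref{main:boundary-decay}. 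At \(x=0\), any unit vector \(x_0\) works with \(\tilde d=1\); alternatively the bound there follows by continuity.

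I expect Step 1 to be the only delicate point. The issue is justifying the comparison when \(v\) is merely \(C^1\) up to the boundary and the ellipticity of \(\mathcal Q\) is not uniform. This is handled by the pointwise interior-maximum argument above, which needs no uniform ellipticity: it uses only \(H_v>0\) and the fact that the matrix \(\delta_{ij}-\frac{v_iv_j}{W^2}\) is positive definite.
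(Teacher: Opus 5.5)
Your proof is correct and is essentially the paper's barrier argument. The paper compares \(v\) directly with the concave barrier \(w_y(x)=u(y)+(1+K)\sqrt{2(1-x\cdot y)}\), which satisfies \(H[w_y]\le 0\). Your affine functions \(L_\varepsilon\) are exactly supporting hyperplanes of this barrier (with \(K\) in place of \(1+K\)), since \(\inf_{\varepsilon>0}(\varepsilon/2+t/\varepsilon)=\sqrt{2t}\), and both arguments rest on the single fact \(H_v>0\). Working with affine majorants spares you checking the smoothness and sign of \(D^2w_y\) in \(B_1\), and it yields the slightly sharper constant \(K\) instead of \(1+K\).
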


\begin{proof}
For any \(y\in\partial B_1\), define the barrier function
\[
  w_y(x):=u(y)+(1+K)\sqrt{2\left(1-x\cdot y\right)}.
\]
If \(z\in\partial B_1\), then
\[
 w_y(z)
 =u(y)+(1+K)|z-y|
 \geq u(z)=v(z).
\]
Moreover,
\begin{equation}
 \frac{\partial^2 w_y}{\partial x_i\partial x_j}
 =-2^{-3/2}(1+K)\frac{y_iy_j}{\left(1-x\cdot y\right)^{3/2}}
 \leq0.
 \label{main:cap-hessian}
\end{equation}

We claim that \(v\leq w_y\) in \(B_1\).  Otherwise,
\(v- w_y\) would have a positive interior maximum.  At that
point, the two functions have the same gradient and
\(D^2v\leq D^2 w_y\).  Since
\[
 H[f]
 =\frac1{\sqrt{1+|Df|^2}}
 \left(\delta^{ij}-\frac{f_i f_j}{1+|Df|^2}\right)f_{ij},
\]
the ellipticity of the mean-curvature operator and
\eqref{main:cap-hessian} give
\[
 H[v]\leq H[ w_y]\leq0.
\]
This contradicts \(H[v]>0\). Hence \(v\leq w_y\).

For a given \(x\in B_1\), choose a nearest boundary point \(y\).
Then
\[
 |x-y|=\tilde{d}(x).
\]
Using the Lipschitz bound for \(u\), we obtain
\begin{align*}
 0\leq v(x)-u(x)
 &\leq  w_y(x)-u(x)\\
 &\leq (1+K)\sqrt{2\,\tilde{d}(x)}+K \tilde{d}(x)\\
 &\leq (1+K)\left(\sqrt{2\,\tilde{d}(x)}+\tilde{d}(x)\right),
\end{align*}
which proves \eqref{main:boundary-decay}.
\end{proof}

We now apply the preceding estimates to the comparison solution \(v\).
The boundary-decay estimate will be used to choose an interior level
set of \(v-u\), where the comparison Pogorelov estimate can be
applied.

\begin{proof}[Proof of Theorem~\ref{thm:main}]
By increasing \(K\), we may assume \(K\geq1\).
Let \(v\) be the comparison solution defined in
\eqref{v}, with
\[
 \lambda_u=1,
 \qquad
 \lambda_v=\frac1{4}.
\]
For \(\partial B_1\subset\mathbb R^n\),
\[
 \frac{n-2}{n}\sigma_2(\kappa[\partial B_1])
 =
 \frac{(n-1)(n-2)^2}{2n}
 \geq\frac13>\frac14.
\]
Thus the boundary curvature condition in
\cite[Theorem~4.1]{Ivochkina} is satisfied.  The right-hand side is
positive and independent of the height, the boundary data are smooth,
and \(u\) is a strict admissible subsolution with the same boundary
values.  Ivochkina's theorem therefore gives a smooth admissible
solution \(v\); uniqueness follows from the comparison principle.
No quantitative estimate from the existence theorem is used below.

The strong comparison principle gives \(v>u\) in \(B_1\).  Since
\(H_v>0\), the function \(v\) has no interior maximum.  Hence
\[
 \sup_{B_1}v=\sup_{\partial B_1}u\leq K,
 \qquad
 \inf_{B_1}v\geq\inf_{B_1}u\geq-K.
\]
In particular, \(\operatorname{osc}_{B_1}v\leq2K\), and Korevaar's
interior gradient estimate~\cite{Korevaar} gives
\[
 \lVert Dv\rVert_{L^\infty(B_{7/8})}\leq C(n,K).
\]
Theorem~\ref{thm:separation}, with its gradient parameter replaced by
this data-dependent bound, now gives \(c_0=c_0(n,K)\in(0,1]\) such that
\begin{equation}
 v-u\geq c_0
 \qquad\text{in }B_{1/2}.
 \label{main:interior-separation}
\end{equation}

Apply Lemma~\ref{lem:boundary-decay} in \(B_1\). Choose
\(d_0=d_0(n,K)\in(0,1/2)\) so small that
\begin{equation}
 (1+K)\bigl(\sqrt{2d_0}+d_0\bigr)<\frac{c_0}{4}.
 \label{main:collar-choice}
\end{equation}
Choose a regular value
\[
 \delta\in\left(\frac{c_0}{4},\frac{c_0}{2}\right)
\]
of \(v-u\), and let \(\Omega\) be the connected component of
\(\{v-u>\delta\}\) containing \(B_{1/2}\). Combining Lemma~\ref{lem:boundary-decay},
\eqref{main:interior-separation}, and
\eqref{main:collar-choice}, we obtain
\[
 B_{1/2}\subset\Omega,
 \qquad
 \overline\Omega
 \subset
 \left\{x\in B_1:
 \operatorname{dist}(x,\partial B_1)>d_0\right\}
 \Subset B_1.
\]
Since \(\delta\) is a regular value, \(\partial\Omega\) is smooth and
\(v-u=\delta\) on \(\partial\Omega\).  For every \(x\in\Omega\),
the ball \(B_{d_0/2}(x)\) is contained in \(B_1\).  The scaled
interior gradient estimate of Korevaar, together with
\(\operatorname{osc}_{B_1}v\leq2K\), gives
\[
 \lVert Dv\rVert_{L^\infty(\Omega)}\leq C(n,K).
\]

Set
\[
 \rho:=v-u-\delta.
\]
Then the following properties hold:
\[
 \rho>0\quad\text{in }\Omega,
 \qquad
 \rho=0\quad\text{on }\partial\Omega,
 \qquad
 \lVert u\rVert_{C^1(\Omega)}
 +\lVert v-\delta\rVert_{C^1(\Omega)}\leq C(n,K),
 \qquad
 \lVert\rho\rVert_{L^\infty(\Omega)}\leq2K,
\]
and \eqref{main:interior-separation} gives
\[
 \rho\geq\frac{c_0}{2}
 \qquad\text{in }B_{1/2}.
\]

Apply Theorem~\ref{thm:comparison-pogorelov} to \(u\) and
\(v-\delta\). We obtain, for some
\(\beta=\beta(n,K)>2\),
\[
 \sup_\Omega \rho^\beta H[u]\leq C(n,K).
\]
Since \(\rho\geq c_0/2\) on \(B_{1/2}\),
\[
 \sup_{B_{1/2}}H[u]\leq C(n,K).
\]
Finally,
\[
 |A_u|^2=H_u^2-2\sigma_2(\kappa[u])=H_u^2-2.
\]
Thus
\[
 |\kappa[u]|=|A_u|\leq C(n,K)
 \qquad\text{in }B_{1/2},
\]
which proves \eqref{eq:main-estimate}.
\end{proof}

\section{Appendix}\label{appendix}
In this appendix, we give a maximum-principle proof of a doubling estimate for the \(2\)-Hessian equation. 
\begin{theorem}\label{thm:single-maximum}
Let $n\ge2$, and suppose that $u,v\in C^3(B_1)$ satisfy
\[
 D^2u,D^2v\in\Gamma_2,\qquad
 \sigma_2(D^2u)=1,\qquad
 \sigma_2(D^2v)=\frac14,\qquad
 u<v\quad\text{in }B_1.
\]
Assume that $|Dv|\le K$ in $B_{7/8}$. Let
$y_0\in\overline B_{1/8}$ and $0<r\le1/8$. If
\[
 v-u\ge\delta>0\quad\text{in }B_r(y_0),
\]
then
\begin{equation}\label{eq:separation}
 v-u\ge
 \delta\exp\left[-\frac{C(n)(1+K^2)}{r^6}\right]
 \quad\text{in }B_{1/2}.
\end{equation}
\end{theorem}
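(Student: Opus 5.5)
The plan is to transplant the proof of Theorem~\ref{thm:separation} to the Hessian setting. In that setting the two-surface machinery collapses to a one-point computation. Set \(w=v-u>0\) and \(b=-\log w\), and let \(L=\sigma_2^{ij}(D^2v)\partial_{ij}\) be the linearization at \(v\), so that \(\operatorname{tr}L=(n-1)\Delta v\) and \(\sigma_2^{ij}(D^2v)v_{ij}=2\cdot\frac14\).

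\textbf{Step 1: a Jacobi inequality for \(b\).} Concavity and homogeneity of \(\sigma_2^{1/2}\), as recalled in the introduction, give \(L(v-u)\le-c\) with \(c>0\) explicit. I expect this to be the Hessian analogue of Proposition~\ref{prop:distance-jacobi}, taking \(c=\tfrac12\) after normalizing by \(\sigma_2(D^2v)^{1/2}\). Then
\[
 Lb=-\frac{Lw}{w}+\frac{\sigma_2^{ij}w_iw_j}{w^2}\ \ge\ \frac{c}{w}+|\nabla b|_L^2 .
\]
So \(b\) satisfies a Jacobi inequality with gradient coefficient \(1\), which is better than \(\tfrac{1}{2(n-1)}\). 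Its zero-order term \(ce^{b}\) is positive.

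\textbf{Step 2: the test function.} Fix the cutoff \(\rho=\frac49-|z|^2\) with \(z=y-y_0\), and the Guan--Qiu support term \(\Phi=z\cdot Dv-(v(y)-v(y_0))\). No gradient correction \(|z|\mathcal G\) is needed, since the operator has no \(Du\)-dependence. Maximize
\[
 2\log\rho+\beta\Phi+\log\max\{b,S\}
\]
on \(B_{2/3}(y_0)\), with \(S\ge\log(2/\delta)\). Since \(b\le\log(1/\delta)\) on \(B_r(y_0)\), an interior maximum with \(b>S\) forces \(|z|\ge r\). Next, \(\Phi_i=v_{ik}z_k\) and \(L\Phi=\sigma_2^{ij}v_{ik}v_{kj}\,(\cdot)\)-type terms: one has \(L\Phi=\sum_i\sigma_2^{ii}\kappa_i^2\) plus bounded terms in the eigenframe of \(D^2v\). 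This gives the positive quadratic term \(\sum_i(\Delta v-\kappa_i)\kappa_i^2\), exactly as in Lemma~\ref{lem:phi}. The first-order condition then reads
\[
 \frac{b_i}{b}=\frac{4z_i}{\rho}-\beta\kappa_iz_i .
\]

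\textbf{Step 3: the case split.} The master inequality will be \(0\ge -C\Delta v/\rho^2+\beta\sum(\Delta v-\kappa_i)\kappa_i^2+b\sum(\Delta v-\kappa_i)(b_i/b)^2\). I split as in the proof of Theorem~\ref{thm:separation}.

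\emph{Case 1:} \(\sum_{i\ge2}z_i^2\gtrsim|z|^2\). Complete the square in \(\kappa_i\). This yields \(\gtrsim(b^{-1}+\beta)^{-1}|z|^2\Delta v/\rho^2\), which exceeds \(C\Delta v/\rho^2\) once \(\beta\sim r^2\) and \(S\gtrsim r^{-2}\).

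\emph{Case 2:} \(z\) is nearly aligned with \(e_1\). Here \(z_1^2\ge|z|^2/2\), and the term \(b\beta^2(\Delta v-\kappa_1)\kappa_1^2z_1^2\gtrsim b\beta^2r^2\Delta v\) bounds \(\rho^2b\lesssim(\beta r)^{-2}\). With \(\beta\sim r^2\), this is of order \(r^{-6}\).

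Tracking constants through the \(K\)-dependence of \(|\Phi|\le CK|z|\) then produces \(b\le\log(1/\delta)+C(n)(1+K^2)r^{-6}\) on \(B_{1/2}\). This is \eqref{eq:separation}.

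\textbf{Main obstacle.} The delicate point is the bookkeeping that makes the final constant exactly \(\exp[-C(n)(1+K^2)/r^6]\) and multiplicative in \(\delta\). For this I will take \(S=\log(2/\delta)+C(n)r^{-2}\) and work with \(\max\{b-\log(1/\delta),S'\}\) shifted, so that \(\delta\) enters only additively in \(b\).

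A second issue is the spectral inequalities \eqref{eq:spectral-bounds}, which require \(\Delta v\) large. Where \(\Delta v\) is small, the cutoff error \(C\Delta v/\rho^2\) is harmless relative to the zero-order gain \(c\,b/w\ge0\). So I will keep the term \(c/w\) from Step 1 as a safeguard in that regime.
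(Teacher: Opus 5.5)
There is a genuine gap in Step~2. With \(F^{ij}=\sigma_2^{ij}(D^2v)=\Delta v\,\delta_{ij}-v_{ij}\), your support term \(\Phi=z\cdot Dv-v+v(y_0)\) has \(\partial_i\Phi=v_{ik}z_k\) and \(\partial_{ij}\Phi=v_{ij}+z_kv_{ijk}\). Hence \(F^{ij}\partial_{ij}\Phi=2\sigma_2(D^2v)+z_k\partial_k\sigma_2(D^2v)=\tfrac12\), and there is no quadratic term \(\sum_i(\Delta v-\kappa_i)\kappa_i^2\) at all. In Lemma~\ref{lem:phi}, the positive coefficient of \(\sum_i(H_v-\kappa_i)\kappa_i^2\) comes from the correction \(|z|\mathcal G(Dv)\), because \(\Delta_{T_v}\langle N_v,E_{n+1}\rangle^{-1}\ge T_v^{ij}h_{ik}h_{kj}/\langle N_v,E_{n+1}\rangle\). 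That is exactly the piece you drop, and in the flat case nothing replaces it.

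Two things then break. First, your \(\Phi\) is the negative of the graphical support term used in Lemma~\ref{lem:phi}. So your first-order condition \(b_i/b=z_i(4/\rho-\beta\kappa_i)\) allows cancellation even when \(z\parallel e_1\), and the lower bound \(b\beta^2(\Delta v-\kappa_1)\kappa_1^2z_1^2\) in your Case~2 is unjustified. The support term must enter with negative derivative, so that the cutoff and support contributions add where \(\kappa_1>0\). Second, even with the sign fixed, Case~1 has nothing to complete the square against. Suppose \(z\) is essentially parallel to \(e_j\), \(j\ge2\), with \(v_{jj}\approx-4/(\beta\rho)\). Then the bracket \(4/\rho+\beta v_{jj}\) vanishes, every positive term disappears, and \(-C\Delta v/\rho^2\) survives. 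The zero-order gain \(1/(2wb)\) does not help, since nothing in your argument bounds \(\Delta v\) in terms of \(1/w\).

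The paper repairs both points by making the dependence on \(\Phi\) convex and decreasing. It uses \(g(\Phi)\) with \(g(t)=-\frac1{2\beta}\log\bigl(1+\frac{t}{12(1+K)}\bigr)\) and \(\beta=100n^2/r^2\). Then \(g'<0\), \(|g'|^{-1}\le C\beta(1+K)\), \(g''=2\beta(g')^2\), and \(F^{ij}(g(\Phi))_{ij}=\tfrac12g'+g''\sum_iF^{ii}z_i^2v_{ii}^2\). Because this new positive term carries the weight \(z_i^2\), the paper does not complete squares over all directions. Instead it fixes \(j\) with \(|z_j|\ge r/\sqrt n\) and splits into three cases.
\begin{itemize}
\item \(j=1\): the signs agree since \(g'<0<v_{11}\), and \(F^{11}v_{11}^2\ge\frac1{2n^2(n-1)}\sum_iF^{ii}\) gives \(b\rho^2\le C(n)/(r^2(g')^2)\le C(n)(1+K^2)r^{-6}\).
\item \(j>1\) with \(|4/\rho-g'v_{jj}|\ge2/\rho\): the gradient term alone gives \(b\le C(n)r^{-2}\).
\item \(j>1\) with \(|4/\rho-g'v_{jj}|<2/\rho\): here \((g'v_{jj})^2\ge4/\rho^2\), and the \(g''\)-term, combined with \(F^{jj}\ge\frac1{4(n-1)}\sum_iF^{ii}\) and the size of \(\beta\), contradicts the cutoff error.
\end{itemize}
Several parts of your plan do match the paper: Step~1, the normalization \(b=\log(\delta/w)\) that makes the bound multiplicative in \(\delta\), and the \(r^{-6}\) scaling, with the effective linear coefficient \(|g'|\) of order \(r^2\). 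You also never need \(\Delta v\) to be large, since \(\sum_iF^{ii}=(n-1)\Delta v\ge1\) follows from \(\sigma_2(D^2v)=\frac14\).
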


\begin{proof}
Set
\[
 w=v-u,\qquad b=\log\frac{\delta}{w},\qquad
 F^{ij}=\Delta v\,\delta_{ij}-v_{ij},
\]
and
\[
 z=y-y_0,\qquad \rho=\frac49-|z|^2,\qquad
 \beta=\frac{100n^2}{r^2},\qquad
 g(t)=-\frac1{2\beta}
 \log\left(1+\frac{t}{12(1+K)}\right).
\]
In $B_{2/3}(y_0)\Subset B_{7/8}$, consider
\[
 \mathcal Q
 =
 2\log\rho
 +g\bigl(z\cdot Dv-v+v(y_0)\bigr)
 +\log\log\max\left\{\frac{\delta}{w},20\right\}.
\]
The auxiliary function $\mathcal Q$ has the same form as the
test function in \cite[p.~584, equation~(2.15)]{QiuHessian},
except that $g$ is chosen here so that $g'<0$.

Since
\[
 |z\cdot Dv-v+v(y_0)|\le\frac{4K}{3},
\]
on the relevant range we have
\[
 |g|\le1,\qquad
 0<-g'\le1,\qquad
 \frac1{|g'|}\le C\beta(1+K),\qquad
 g''=2\beta(g')^2.
\]
Let $\bar y$ be a maximum point of $\mathcal Q$.
If $b(\bar y)\le\log20$, the maximum property gives
\[
 \rho^2\max\{b,\log20\}\le C,
\]
which implies \eqref{eq:separation}.
We may therefore assume that $b(\bar y)>\log20$.
Since $b\le0$ in $B_r(y_0)$, we have $|\bar y-y_0|>r$.

All subsequent calculations are made at $\bar y$.
Choose orthonormal coordinates such that
\[
 D^2v=\operatorname{diag}(v_{11},\ldots,v_{nn}),
 \qquad v_{11}\ge\cdots\ge v_{nn}.
\]
The first derivative condition gives
\begin{equation}\label{eq:single-maximum-first}
 0=-\frac{4z_i}{\rho}+g'z_iv_{ii}+\frac{b_i}{b},
 \qquad
 \frac{b_i}{b}
 =z_i\left(\frac4\rho-g'v_{ii}\right).
\end{equation}
By concavity of $\sigma_2^{1/2}$,
\[
 F^{ij}w_{ij}\le-\frac12,
 \qquad
 F^{ij}b_{ij}
 \ge\frac1{2w}+F^{ij}b_i b_j.
\]
Using $F^{ij}v_{ij}=1/2$ and $F^{ij}v_{ijk}=0$, we obtain
\[
\begin{aligned}
 0\ge F^{ij}\mathcal Q_{ij}
 ={}&
 -\frac4\rho\sum_iF^{ii}
 -\frac8{\rho^2}\sum_iF^{ii}z_i^2
 +g''\sum_iF^{ii}z_i^2v_{ii}^2+\frac12g'\\
 &+\frac{F^{ij}b_{ij}}b
 -\frac{F^{ij}b_i b_j}{b^2}.
\end{aligned}
\]
Since $b>\log20>2$, $|z_i|\le1$, $\rho\le1$, and
\[
 \sum_iF^{ii}=(n-1)\Delta v
 \ge\sqrt{\frac{n(n-1)}2}\ge1,
\]
it follows that
\begin{equation}\label{eq:single-maximum-basic}
 0\ge
 -\frac{14}{\rho^2}\sum_iF^{ii}
 +g''\sum_iF^{ii}z_i^2v_{ii}^2
 +\frac1{2b}\sum_iF^{ii}b_i^2.
\end{equation}

Choose $j$ such that $|z_j|\ge r/\sqrt n$.
We use the algebraic inequalities
\[
 F^{11}v_{11}^2
 \ge\frac1{2n^2(n-1)}\sum_iF^{ii},
 \qquad
 F^{jj}\ge\frac1{4(n-1)}\sum_iF^{ii}
 \quad(j>1).
\]

\medskip
\noindent
\textit{Case 1: $j=1$.}
Since $g'<0$ and $v_{11}>0$,
\eqref{eq:single-maximum-first} and
\eqref{eq:single-maximum-basic} give
\[
\begin{aligned}
 0
 &\ge
 -\frac{14}{\rho^2}\sum_iF^{ii}
 +\frac b2F^{11}z_1^2
       \left(\frac4\rho-g'v_{11}\right)^2\\
 &\ge
 -\frac{14}{\rho^2}\sum_iF^{ii}
 +\frac{br^2(g')^2}{2n}F^{11}v_{11}^2.
\end{aligned}
\]
Consequently,
\[
 b\rho^2
 \le\frac{C(n)}{r^2(g')^2}
 \le\frac{C(n)\beta^2(1+K^2)}{r^2}
 \le\frac{C(n)(1+K^2)}{r^6}.
\]

\medskip
\noindent
\textit{Case 2: $j>1$ and
$\left|\frac4\rho-g'v_{jj}\right|\ge\frac2\rho$.}
Then
\[
\begin{aligned}
 0
 &\ge
 -\frac{14}{\rho^2}\sum_iF^{ii}
 +\frac b2F^{jj}z_j^2
       \left(\frac4\rho-g'v_{jj}\right)^2\\
 &\ge
 -\frac{14}{\rho^2}\sum_iF^{ii}
 +\frac{2br^2}{n\rho^2}F^{jj},
\end{aligned}
\]
and hence
\[
 b\le\frac{28n(n-1)}{r^2}.
\]

\medskip
\noindent
\textit{Case 3: $j>1$ and
$\left|\frac4\rho-g'v_{jj}\right|<\frac2\rho$.}
In this case $(g'v_{jj})^2\ge4/\rho^2$.
Using $g''=2\beta(g')^2$ in
\eqref{eq:single-maximum-basic}, we obtain
\[
\begin{aligned}
 0
 &\ge
 -\frac{14}{\rho^2}\sum_iF^{ii}
 +2\beta(g')^2z_j^2v_{jj}^2F^{jj}\\
 &\ge
 \left(-14+\frac{2\beta r^2}{n(n-1)}\right)
 \frac{\sum_iF^{ii}}{\rho^2}\\
 &=
 \left(-14+\frac{200n}{n-1}\right)
 \frac{\sum_iF^{ii}}{\rho^2}>0,
\end{aligned}
\]
a contradiction.

Thus, in all possible cases,
\[
 \rho(\bar y)^2\max\{b(\bar y),\log20\}
 \le\frac{C(n)(1+K^2)}{r^6}.
\]
The maximum property of $\mathcal Q$ and the boundedness of $g$
give the same bound for $\rho(y)^2\max\{b(y),\log20\}$.
For $y\in B_{1/2}$,
\[
 |y-y_0|\le\frac58,\qquad
 \rho(y)\ge\frac49-\frac{25}{64}=\frac{31}{576}.
\]
Therefore
\[
 \log\frac{\delta}{v-u}
 \le\frac{C(n)(1+K^2)}{r^6}
 \qquad\text{in }B_{1/2},
\]
which proves \eqref{eq:separation}.
\end{proof}

\section*{Acknowledgments}
The authors acknowledge support from Grant 2025YFA1017603 of the National Key
R\&D Program of China and Grant 12571227 of the National Natural Science
Foundation of China.

\section*{Declaration on the use of artificial intelligence}
In developing Sections~3 and~4, the authors used ChatGPT Sol 5.6 for exploration and computation. The authors divided
the proof into uniform separation and a Pogorelov estimate,
and proposed Qiu's maximum-principle framework for separation.
ChatGPT then found the admissible-jet counterexample in
Remark~\ref{rem:intrinsic-one-point-obstruction}, showing that
the Li--Wu comparison argument does not extend directly to the
graphical curvature equation. The authors then asked ChatGPT to explore a family of Dirichlet
problems to replace the Li--Wu comparison function. Differentiation
gave a strong Jacobi inequality, but a uniform Lipschitz bound
for the linearized solutions was not apparent. A more involved
approach based on the Shankar--Yuan method was later found.
To simplify it, the authors proposed replacing the difference
of the two functions by a distance function. This led to the
distance-function Jacobi argument in Section~3. AI-assisted
calculations suggested the compression inequality in
Lemma~\ref{lem:compression}. For the Pogorelov estimate, the authors asked ChatGPT to test
one-point auxiliary functions and differential inequalities.
The system produced preliminary calculations and candidate
counterexamples. After checking these, the authors identified
the mismatch between the two hypersurfaces' gradients and
normals as the main obstruction and proposed a geometric
two-point comparison. Further AI-assisted calculations supplied
the precise form of $\eta(x,y)$ in \eqref{pog:def-eta}.

\bibliographystyle{amsplain}
\bibliography{references}
\printauthorinformation

\end{document}